\def\env@sqcases{%
  \let\@ifnextchar\new@ifnextchar
  \left\lbrack
  \def\arraystretch{1.2}%
  \array{@{}l@{\quad}l@{}}%
}
\newcommand{\R}{\ensuremath{\mathbb R}}
\newcommand{\C}{\ensuremath{\mathbb C}}
\newcommand{\Z}{\ensuremath{\mathbb Z}}
\newcommand{\A}{\ensuremath{\mathbb A}}
\newcommand{\defeq}{\vcentcolon=}
\renewcommand{\O}{\ensuremath{\mathcal{O}}}
\renewcommand{\div}{\operatorname{div}}
\renewcommand{\Im}{\operatorname{Im}}
\renewcommand{\d}{\ensuremath{\mathfrak d}}
\DeclareMathOperator{\Div}{Div}
\DeclareMathOperator{\CaDiv}{CaDiv}
\DeclareMathOperator{\rk}{rk}
\DeclareMathOperator{\Spec}{Spec}
\DeclareMathOperator{\Supp}{Supp}
\DeclareMathOperator{\Vol}{Vol}
\DeclareMathOperator{\MVol}{MVol}
\DeclareMathOperator{\Ker}{Ker}
\DeclareMathOperator{\Hom}{Hom}
\DeclareMathOperator{\Conv}{Conv}
\DeclareMathOperator{\Pic}{Pic}
\newcommand{\citestacks}[1]{\cite[\href{https://stacks.math.columbia.edu/tag/#1}{Tag #1}]{stacks-project}}
\theoremstyle{definition}
\newtheorem{theorem}{\textbf{Theorem}}[section]
\newtheorem{example}[theorem]{\textbf{\sc{Example}}}
\newtheorem*{example*}{\textbf{\sc{Example}}}
\newtheorem{claim}[theorem]{\textbf{\sc{Claim}}}
\crefname{claim}{Claim}{Claims}
\newtheorem*{claim*}{\textbf{\sc{Claim}}}
\crefname{claim*}{Claim}{Claims}
\crefname{notation}{Notation}{Notations}
\newtheorem{lemma}[theorem]{\textbf{\sc{Lemma}}}
\crefname{lemma}{Lemma}{Lemmas}
\newtheorem{remark}[theorem]{\textbf{\sc{Remark}}}
\newtheorem{corollary}[theorem]{\textbf{\sc{Corollary}}}
\crefname{corollary}{Corollary}{Corollaries}
\theoremstyle{plain}
\newtheorem{definition}[theorem]{Definition}
\def\@seccntformat#1{\csname the#1\endcsname.\ }
\let\@fnsymbol\@arabic
\title{Irreducible components of Toric Complete Intersections}
\author{Andrey Zhizhin}
\date{August 2025}
\begin{document}

\maketitle
\begin{abstract}
    An equivariant linear system on a toric variety is a linear system invariant under the torus action. We study the number of irreducible components of the complete intersection of general divisors from a fixed collection of equivariant linear system on a toric variety $X$. An explicit formula for the number of components was obtained in \cite{Khovanskii2016} for the case $X = T^n$ over $\C$ and generalized to an algebraically closed field of arbitrary characteristic in \cite{Zhizhin2024}. Building on these results, we give a recursive formula for an arbitrary toric variety.
\end{abstract}

\tableofcontents

\section{Introduction}
\subsection{Landscape}

\paragraph{Motivation} 
One could consider the following natural framework. We fix an ambient space such as $\A^n$ or $\mathbb P^n$ and study varieties defined by general "equations" of a fixed form -- that is, equiations with fixed monomials (e.g. homogeneous equations and homogeneous monomials in the case of $\mathbb P^n$). As usual, by an "equation" on a general variety we mean a Cartier divisor. The corresponding generalization of a space of systems with fixed monomials is an equivariant linear system on a toric variety $X$ --- we will explain the analogy in more details below. The aspect we are concerned with is the number of irreducible components of the variety defined by such general "system of equations". Once we set our focus to toric varieties the problem splits into two parts. First, we need to solve the problem (i.e. find the number of irreducible components) in the case when our variety $X$ is just the algebraic torus $T^n$. Then we cut our variety $X$ into tori and analyze which components from these tori contribute to the number of components of the global intersection. The former part was resolved over $\C$ in \cite{Khovanskii2016} and later extended to fields of arbitrary characteristic in \cite{Zhizhin2024}. The present note treats the latter part.

\paragraph{Main result} The main difficulty here lies not in the proofs but in the language one needs to get through in order to set up the result. In the following few paragraphs we will try to walk the reader through all the necessary steps as quickly as possible. We begin with the standard notation. Let $N \simeq \Z^n$ be a lattice, $M \defeq \Hom(M, \Z)$ be the dual lattice, $\Sigma$ be a fan\footnote{for the purposes of this text it is convenient to consider only the integral points of all the cones, i.e. we define a cone as a saturated subset of the lattice that contains zero and is closed under addition} in $N$ and $X_\Sigma$ be the corresponding toric variety. 

Let $D$ be an equivariant Cartier divisor on $X_\Sigma$. Then we define\footnote{we denote by $|\Sigma|$ the support of $\Sigma$, i.e. the union of its cones} $\psi_D : |\Sigma| \to \Z$, $\lambda \mapsto \deg \lambda^*D$, i.e. for any $\lambda: T^1 \to T^n$ we fix the unique extension $\bar \lambda : \A^1 \to X_\Sigma$, then $\bar \lambda^*D$ is a divisor on $\A^1$ that is supported at most at zero: $\lambda^*D = m \cdot \{ 0 \}$, and we define $\psi_D(\lambda) \defeq m$. We call $\psi_D$ \textit{the support function}\footnote{in some sources (e.g. in \cite{CoxLittleSchenk}) the support function has the opposite sign} \textit{of $D$}. We will need the following fact: let $\sigma \in \Sigma$ be a cone and $\O_\sigma \subset X_\Sigma$ be the corresponding orbit, then $\O_\sigma \not \subset \Supp D$ if and only if $\psi_D(\sigma) = 0$. Also the support function is linear on cones, $\psi_{D + E} = \psi_D + \psi_E$ for any equivariant Cartier divisors $D, E$, and an equivariant Cartier divisors is completely defined by its support function --- we recall all the necessary material in \autoref{linsys:ssec:equivar_div}.

An equivariant linear system on $X_\Sigma$ is a linear system $\d$ such that for any divisor $D \in \d$ and any $t \in T^n$ we have that $t^*D \in \d$. Linear systems correspond to pairs $(\mathcal L, V)$, where $\mathcal L$ is a line bundle and $V \subset \Gamma(X_\Sigma, \mathcal L)$ is a non-zero subspace of the space of its sections. Any equivariant linear system $\d$ contains an equivariant Cartier divisor $D \in \d$, so we may assume that $\mathcal L = \O(D)$ --- in particular, now\footnote{For a set $S$ we denote by $k \cdot S$ the $k$-vector space spanned by $S$} $V \subset~\Gamma(T^n, \O(D)) =~k \cdot M$ as $D|_{T^n} = 0$. Consider the set $A \defeq V \cap M$. Since $\d$ is equivariant, $V = k \cdot A$. The combinatorial datum $(A, \psi \defeq \psi_{D_0})$, where $D_0 \in \d$ is an equivariant divisor, determines an equivariant linear system uniquely and henceforth we will work only with the combinatorial data. We say that $\d$ degenerates on $\O_\sigma$ if $\O_\sigma$ lies in the base locus of $\d$, i.e. $\O_\sigma \subset D$ for all $D \in \d$. The following combinatorial criterion applies: $\d$ degenerates on $\O_\sigma$ if and only if for any $\chi \in A$ there is $\lambda \in \sigma$ such that $\langle \chi, \lambda \rangle + \psi(\lambda) > 0$. If $\d$ does not degenerate on $\O_\sigma$, then $\d|_{\O_\sigma}$ is defined by the combinatorial data $(\chi^{-1} \cdot A \cap \sigma^\bot, 0)$, where $(\psi + \langle\chi, - \rangle)|_\sigma \equiv 0$. We will denote by $A^\sigma \defeq \chi^{-1} \cdot A \cap \sigma^\bot$ if $\d$ does not degenerate on $\O_\sigma$ --- otherwise we put $A^\sigma \defeq \varnothing$. Proofs for all the mentioned statements can be found in \autoref{linsys:ssec:equivar_ls}.

The last ingredient is the Khovanskii theorems. Given finite subsets $A_1, \dots, A_m \subset~M$ and the corresponding linear systems $\d_1, \dots, \d_m$ on $T^n$ we denote by $K_{T^n}(A_1, \dots, A_m)$ the number of irreducible components of $D_1 \cap \dots \cap D_m$ for the general $D_1 \in \d_1, \dots,$ $D_m \in \d_m$ --- the number is computed in \cite{Khovanskii2016} (and \cite{Zhizhin2024}) --- we placed the algorithm for its computation in \autoref{prelim:ssec:khovanskii}.

Finally, we are ready to formulate our result. Let $\d_1, \dots, \d_m$ be equivariant linear systems on $X_\Sigma$ defined by the combinatorial data $(A_1, \psi_1), \dots, (A_m, \psi_m)$. Consider the sets $\mathcal D(\sigma) \defeq \{ i\ |\ \d_i \text{ degenerates on } \O_\sigma \}$ and the function $d(\sigma) \defeq |\mathcal D(\sigma)| - \dim \sigma$. Define the set:
\[
    \mathcal S = \{ \sigma \in \Sigma \ |\ d(\sigma) \ge d(\tau) \text{ for all faces } \tau \text{ of } \sigma\}.
\]
Then for the general $D_1 \in \mathfrak d_1, \dots, D_m \in \mathfrak d_m$ the intersection $D_1 \cap \dots \cap D_m$ has the following number of irreducible components:
\[
\sum_{\sigma \in \mathcal S} K_{\O_\sigma}(A_1^\sigma, \dots, A_m^\sigma)
\]
where if $\mathfrak d_i$ degenerates along $\O_\sigma \xhookrightarrow{} X_\Sigma$, then we assume\footnote{i.e. we assume $K_\sigma(A_1^\sigma, \dots, A_m^\sigma) \defeq K_\sigma (A_{i_1}^\sigma, \dots, A_{i_l}^\sigma)$ for $\{ i_1, \dots, i_l \} \defeq \{1, \dots, m\} \backslash \mathcal D(\sigma)$} $ A_i^\sigma = \varnothing$.

\subsection{Paper Structure}
The opening \autoref{sec:preliminaries} consists of 3 independent parts. The first is \autoref{prelim:ssec:div_lin_sys} in which we recall the basic definitions and facts concerning divisors and linear systems on normal noetherian integral schemes. Then in \autoref{prelim:ssec:equivariance} we briefly discuss what we mean by equivariance in this text. The third part is \autoref{prelim:ssec:newton_polytopes} and \autoref{prelim:ssec:khovanskii}, where we recall all the Newton Polytope theory we need.

The following \autoref{sec:linsys} is the technical heart of the text. In the first two subsections (\autoref{linsys:ssec:pullbacks_div} and \autoref{linsys:ssec:pullbacks_ls}) we develop a general theory of pullbacks for divisors and linear systems --- we do not imply that the material there is by any means novel, but rather that it may lack a unified exposition in the literature. The following two subsections (\autoref{linsys:ssec:equivar_div} and \autoref{linsys:ssec:equivar_ls}) apply this theory in the equivariant toric setting, resulting in a combinatorial technique for restricting equivariant linear systems to closed invariant subvarieties (e.g., orbits).

Finally in \autoref{sec:counting} we use the results of \autoref{linsys:ssec:equivar_div} and \autoref{linsys:ssec:equivar_ls} to count the irreducible components. The section is short and follows naturally from the preceding discussion.

\section{Preliminaries}\label{sec:preliminaries}
\paragraph{Notation} We fix an algebraically closed base field $k$. We denote by $T^n$ the algebraic torus $\Spec k[x_1^{\pm 1}, \dots, x_n^{\pm 1}]$. As usual, $M$ stands for the character lattice $\Hom(T^n, T^1)$ and $N$ is the dual lattice $\Hom(T^1, T^n)$. $\langle \ ,\  \rangle : M \times N \to Z$ denotes the canonical pairing. For a fan $\Sigma$ in $N$ we denote by $X_\Sigma$ the corresponding toric variety and by $|\Sigma| \subset N$ the support of the fan, i.e. the union of all cones of $\Sigma$.

\subsection{Divisors \& Linear Systems}\label{prelim:ssec:div_lin_sys}
In this subsection we discuss the notions of Cartier divisors and Linear Systems. All the mentioned facts and definitions are standard and the reader may feel free to skip this subsection --- it serves as a reference source for \autoref{sec:linsys}.

\subsubsection{Divisors}
Throughout this subsection (and \autoref{sec:linsys}) we work only with integral normal noetherian schemes and by $X$ we always denote a non-empty integral normal noetherian scheme. By $\Div X$ we denote the free abelian group of formal $\Z$-linear combinations of points $\xi \in X$ such that $\dim \O_{X, \xi} = 1$. We call the elements of the group $\Div X$ Weil divisors or just divisors. Equivalently, we can say that $\Div X$ is the free abelian group of formal $\Z$-linear combinations of prime divisors, where a prime divisor on $X$ is an integral closed subscheme of codimension 1. We denote by $R(X)$ the field of rational functions of $X$ and by $\underline{R(X)}$ the $\O_X$-module of rational function. By Serre's criterion of normality for any $\xi \in X$ such that $\dim \O_{X, \xi} = 1$ the local ring $\O_{X, \xi}$ is a discrete valution ring and we denote that valuation by $v_\xi: R(X)^\times \twoheadrightarrow \Z $. Then for a non-zero function $f \in R(X)^\times$ we denote by $\div f$ the divisor $\sum_{\xi} v_\xi(f) \cdot \xi$ and call such divisors principal. For any non-empty open $U \subset X$ we have the natural homomorphism $\Div X \to \Div U, D \mapsto D|_U$ given by $\xi \mapsto \xi$ for $\xi \in U$ and $\xi \mapsto 0$ for $\xi \not \in U$. We say that $D$ is a Cartier Divisor if for any $p \in X$ there is an open neighbourhood $U \subset X$, $p \in U$ and a rational function $f \in R(X)^\times$ such that $D|_U =~\div f|_{U}$, i.e. Cartier divisors are locally principal divisors. As sum of two principal divisors is a principal divisor: $\div f + \div g = \div fg$, Cartier divisors form a subgroup of Weil divisors that we denote by $\CaDiv X \subset \Div X$. Given two divisors $D = \sum_\xi d_\xi \cdot \xi$, $E = \sum_\xi e_\xi \cdot \xi$ we write that $D \ge E$ if $d_\xi \ge e_\xi$ for all $\xi$. All of the following statements are well-known and we refer the reader to \cite[Ch. II, par. 6]{Hartshorne1977} for a complete treatise and most of the proofs.

\begin{definition}
    Let $D \in \Div X$ be a divisor on a normal noetherian integral scheme $X$. Then $\O_X(D)$ is the subsheaf of $\underline{R(X)}$ given by
    \[
    \Gamma(U, \O_X(D)) = \{f \in R(X)\ :\ (D + \div f)|_U \ge 0 \} 
    \]
    for any open $U \subset X$; we assume that $D + \div f \ge 0$ is satisfied for any $D$ if $f = 0$. We will occasionally omit the lower index $X$ and just write $\O(D)$.
\end{definition}

\begin{remark}
    For trivial reasons the subpresheaf $\O(D) \subset \underline{R(X)}$ is in fact an $\O_X$-submodule.
\end{remark}

\begin{remark}\label{prelim:div_lin_sys:rem:inequality_is_sheaf_inclusion}
    Let $D, E \in \Div X$ be divisors on a normal integral scheme $X$. Then $D \le E$ if and only if $\O(E) \subset \O(D)$ (as subsheaves of $\underline{R(X)}$).
\end{remark}
\begin{proof}
    If $D \le E$, then the inclusion $\O(E) \subset \O(D)$ is obvious. The converse: $D \le E$ can be checked at every local ring of dimension 1, so we could assume without loss of generality that $X = \Spec R$, where $R$ is a discrete valuation ring. Let $\xi$ be the unique closed point of $X$ and $\pi$ be a uniformizer, i.e. $\mathfrak m_\xi = (\pi)$. Then $D = d \cdot \xi$, $E = e \cdot \xi$ and $\O(D) = \pi^{-d} \cdot \O$, $\O(E) = \pi^{-e} \cdot \O$, so 
    \[
    D \le E \iff d \le e \iff -e \le -d \iff \O(E) \subset \O(D)
    \]
\end{proof}

\begin{claim}\label{prelim:div_lin_sys:claim:cartier_inv}
    Let $D$ be a Weil divisor on a normal integral noetherian scheme $X$. Then $D$ is Cartier if and only if $\O(D)$ is invertible.
\end{claim}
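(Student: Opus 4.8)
The plan is to reduce both implications to a single algebraic identity, namely that multiplication by a rational function $g$ realizes the isomorphism $\O(E+\div g)\xrightarrow{\sim}\O(E)$, together with the normality input that the regular functions are exactly those with effective divisor. Two preliminary observations set this up. First, because the defining condition $(D+\div h)|_V\ge 0$ is evaluated locally, the construction commutes with restriction: $\O_X(D)|_U=\O_U(D|_U)$ for every open $U$. Second, from the definition one checks directly that for $g\in R(X)^\times$ and any divisor $E$ one has $\Gamma(V,\O(E+\div g))=g^{-1}\,\Gamma(V,\O(E))$ — substitute $h'=gh$ and use $\div g+\div h=\div(gh)$ — so multiplication by $g$ is an isomorphism of subsheaves of $\underline{R(X)}$. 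Specializing to $E=0$ and invoking Serre's criterion, which yields $\Gamma(V,\O_X)=\{f\in R(X):\div f|_V\ge 0\}$ and hence $\O(0)=\O_X$, gives $\O(\div g)=g^{-1}\O_X$: a free rank-one $\O_X$-module generated inside $\underline{R(X)}$ by $g^{-1}$.

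The forward implication is then immediate. If $D$ is Cartier, every point has a neighbourhood $U$ with $D|_U=\div f|_U$, so $\O(D)|_U=\O_U(\div f|_U)=f^{-1}\O_U$ is free of rank one; hence $\O(D)$ is invertible.

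For the converse, suppose $\O(D)$ is invertible and fix an open $U$ together with an isomorphism $\phi:\O_U\xrightarrow{\sim}\O(D)|_U$. Set $s\defeq\phi(1)\in\Gamma(U,\O(D))\subseteq R(X)$; since $\phi$ is an isomorphism, $s\neq 0$, so $s\in R(X)^\times$. The decisive point — and the step I expect to need the most care — is that because $\O(D)$ sits inside the \emph{constant} sheaf $\underline{R(X)}$ (here integrality of $X$ is essential) and $\phi$ is $\O_U$-linear, the abstract generator $s$ must in fact generate by genuine multiplication in $R(X)$: for $V\subseteq U$ and $a\in\Gamma(V,\O_X)$ one has $\phi(a)=a\cdot s$, whence $\O(D)|_U=s\cdot\O_U$ as subsheaves. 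Comparing with $\O(\div(s^{-1}))|_U=s\cdot\O_U$ from the identity above, the two divisor sheaves $\O_U(D|_U)$ and $\O_U(\div(s^{-1})|_U)$ coincide inside $\underline{R(X)}$. Applying \autoref{prelim:div_lin_sys:rem:inequality_is_sheaf_inclusion} over $U$ in both directions forces $D|_U=\div(s^{-1})|_U$, exhibiting $D$ as locally principal and therefore Cartier.
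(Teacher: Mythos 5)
Your proof is correct and follows the same route as the paper's: both reduce the claim to the local equivalence ``$D|_U$ principal $\iff$ $\O(D)|_U$ trivial,'' with the substance carried by the identity $\O(\div g)=g^{-1}\O_X$ and the normality/Hartogs fact $\O(0)=\O_X$. You simply spell out the details (in particular, that a trivialization of a subsheaf of $\underline{R(X)}$ over an integral scheme is necessarily multiplication by a rational function) that the paper compresses into a one-line appeal to Hartogs' lemma.
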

\begin{proof}
    $D$ is Cartier if and only if it is locally principle. $\O(D)$ is in invertible if and only if it is locally free. For any open $U \subset X$ the divisor $D$ is principal if and only if $\O(D|_U) \cong \O(D)|_U$ is trivial by the Hartogs' lemma. Thus we are done
\end{proof}

\begin{claim}\label{prelim:div_lin_sys:claim:divisorial_eq}
    If $D, E$ are Cartier divisors on a normal integral scheme $X$, then $D - E$ is principal if and only if $\O(D) \simeq \O(E)$. Now consider $\O(D)$, $\O(E)$ as fractional ideal sheaves, i.e. subsheaves of the sheaf of rational functions $\underline{R(X)}$. Then $D = E$ if and only if $\O(D) = \O(E)$.
\end{claim}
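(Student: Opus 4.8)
The plan is to treat the two statements separately, spending essentially all the effort on the implication ``abstract isomorphism $\Rightarrow$ principal''; the rest is bookkeeping. I would begin with the second (equality) statement, since it follows immediately from Remark~\ref{prelim:div_lin_sys:rem:inequality_is_sheaf_inclusion} and needs neither the Cartier hypothesis nor the first statement. Writing $D = E$ as the conjunction of $D \le E$ and $E \le D$, and applying the remark to each inequality, gives $D \le E \iff \O(E) \subset \O(D)$ and $E \le D \iff \O(D) \subset \O(E)$; intersecting, $D = E$ holds iff $\O(D) \subset \O(E)$ and $\O(E) \subset \O(D)$, i.e. iff $\O(D) = \O(E)$ as subsheaves of $\underline{R(X)}$.

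For the first statement, the forward direction is a direct computation with multiplication operators. Suppose $D - E = \div f$ for some $f \in R(X)^\times$, so $D = E + \div f$. For $g \in \Gamma(U, \O(D))$ we have $(D + \div g)|_U \ge 0$, hence $(E + \div(fg))|_U = (D + \div g)|_U \ge 0$ and therefore $fg \in \Gamma(U, \O(E))$; the identical computation run with $f^{-1}$ supplies the inverse map. Thus multiplication by $f$ is an isomorphism $\O(D) \xrightarrow{\sim} \O(E)$ of $\O_X$-modules.

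The substantive direction is the converse: from an \emph{abstract} $\O_X$-module isomorphism $\phi : \O(D) \xrightarrow{\sim} \O(E)$ I must manufacture a rational function witnessing that $D - E$ is principal. The key device is to pass to the generic point $\eta$. Since $D$ is a finite $\Z$-combination of prime divisors, shrinking $U$ toward $\eta$ eventually removes every component of $\Supp D$, and passing to the direct limit yields $\O(D)_\eta = \O_{X,\eta} = R(X)$, and likewise $\O(E)_\eta = R(X)$. The induced stalk map $\phi_\eta : R(X) \to R(X)$ is then an $R(X)$-linear automorphism of the rank-one free module $R(X)$, hence multiplication by the unit $f \defeq \phi_\eta(1) \in R(X)^\times$. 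Because $\O(D)$ and $\O(E)$ are subsheaves of the constant sheaf $\underline{R(X)}$ and $\phi$ commutes with the localization maps into the generic stalk, every section $g$ of $\O(D)$ satisfies $\phi(g) = fg$ inside $R(X)$, so $\phi$ is globally multiplication by $f$. The forward computation (applied with $D - \div f$ in the role of $E$) shows that $\cdot f$ carries $\O(D)$ isomorphically onto $\O(D - \div f)$ as a subsheaf of $\underline{R(X)}$, whence $\O(D - \div f) = \im \phi = \O(E)$ as subsheaves; the already-proved equality statement then forces $D - \div f = E$, i.e. $D - E = \div f$ is principal.

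I expect the only genuine obstacle to be this last direction, specifically the step of upgrading the abstract $\phi$ to honest multiplication by a fixed $f \in R(X)^\times$. Everything hinges on the observation that on an integral scheme $\O(D)$ lives inside the constant rational-function sheaf with full generic stalk $R(X)$, so that $\phi$ is rigidified by its restriction to $\eta$; once $f$ has been extracted, identifying subsheaves and invoking the equality statement is routine. I would note in passing that the Cartier hypothesis is not actually used in this argument, though \Cref{prelim:div_lin_sys:claim:cartier_inv} guarantees $\O(D)$ and $\O(E)$ are invertible, which is the natural context in which this equivalence is applied.
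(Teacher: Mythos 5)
Your proof is correct. The paper itself gives no argument here --- it simply cites \cite[Prop.\ II.6.13]{Hartshorne1977} --- so your write-up is a self-contained expansion rather than a divergence; the route you take (reduce the equality statement to \autoref{prelim:div_lin_sys:rem:inequality_is_sheaf_inclusion}, and for the isomorphism statement rigidify an abstract $\O_X$-linear map $\phi:\O(D)\to\O(E)$ at the generic point, where both stalks are all of $R(X)$, to extract a single $f = \phi_\eta(1) \in R(X)^\times$ with $\phi = f\cdot(-)$) is exactly the standard mechanism underlying Hartshorne's proposition. The one step worth making fully explicit is why $\phi_U(g) = fg$ for every local section: it is because the restriction maps of $\underline{R(X)}$ on an integral scheme are injective, so the inclusion $\Gamma(U,\O(D)) \hookrightarrow \O(D)_\eta = R(X)$ lets you read off $\phi_U$ from $\phi_\eta$; you state this, and it is right. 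Your closing observation is also accurate: the argument nowhere uses that $D$ and $E$ are locally principal, so the claim holds for arbitrary Weil divisors on a normal integral scheme, with \autoref{prelim:div_lin_sys:claim:cartier_inv} only needed to know that in the Cartier case these sheaves are the invertible ones.
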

\begin{proof}
    \cite[Prop II.6.13]{Hartshorne1977}
\end{proof}

\begin{claim}\label{prelim:div_lin_sys:claim:product_sum}
    Let $D, E \in \CaDiv X$ be Cartier divisors on a normal integral noetherian scheme. Then $\O(D + E) \cong \O(D) \otimes \O(E)$. Furthermore, if we consider the sheaves $\O(D), \O(E), \O(D + E)$ as subsheaves of $\underline{R(X)}$, then $\O(D) \cdot \O(E) = \O(D + E)$
\end{claim}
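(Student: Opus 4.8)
The plan is to reduce both assertions to a local computation on an open set where $D$ and $E$ are simultaneously principal. Since $D$ and $E$ are Cartier, by \cref{prelim:div_lin_sys:claim:cartier_inv} the sheaves $\O(D)$ and $\O(E)$ are invertible, and I may cover $X$ by opens $U$ on which $D|_U = \div f|_U$ and $E|_U = \div g|_U$ for suitable $f, g \in R(X)^\times$. Both equalities I want to prove are statements about subsheaves of $\underline{R(X)}$ (once the tensor product is realized inside $\underline{R(X)}$ via multiplication), so it suffices to verify them after restricting to each such $U$.

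On such a $U$ the definition of $\O(-)$ unwinds cleanly: for $h \in R(X)$ and $V \subseteq U$ one has $(D + \div h)|_V \ge 0 \iff \div(fh)|_V \ge 0 \iff fh \in \O_X(V)$, so $\O(D)|_U = f^{-1}\O_U$; likewise $\O(E)|_U = g^{-1}\O_U$, and by linearity of $\div$ we get $(D+E)|_U = \div(fg)|_U$, hence $\O(D+E)|_U = (fg)^{-1}\O_U$. The product of fractional ideal sheaves then computes as $f^{-1}\O_U \cdot g^{-1}\O_U = (fg)^{-1}\O_U$, because $\O_U \cdot \O_U = \O_U$. As the opens $U$ cover $X$, this gives $\O(D)\cdot\O(E) = \O(D+E)$ as subsheaves of $\underline{R(X)}$, which is the second assertion.

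For the abstract isomorphism I consider the multiplication morphism $\mu : \O(D) \otimes_{\O_X} \O(E) \to \underline{R(X)}$ induced by the $\O_X$-bilinear multiplication on $\underline{R(X)}$; its image is by definition $\O(D)\cdot\O(E)$, which the previous step identifies with $\O(D+E)$. Restricted to $U$, the map $\mu$ sends the free generator $f^{-1}\otimes g^{-1}$ to $(fg)^{-1}$ and is therefore an isomorphism of free rank-one $\O_U$-modules $f^{-1}\O_U \otimes g^{-1}\O_U \xrightarrow{\sim} (fg)^{-1}\O_U$. Being a local isomorphism on a cover, $\mu$ is a global isomorphism, yielding $\O(D)\otimes\O(E)\cong\O(D+E)$. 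The only genuine subtlety — and the step I would treat most carefully — is the bookkeeping around the product of subsheaves: confirming that its formation commutes with restriction to the $U$'s and that checking the identity $\O(D)\cdot\O(E)=\O(D+E)$ on a cover suffices, which is immediate once one defines the product as the image of $\mu$ and recalls that images of sheaf maps are computed locally.
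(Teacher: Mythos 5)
Your proof is correct, and your handling of the ``Furthermore'' part is exactly the paper's argument: reduce to an open $U$ on which $D|_U = \div f|_U$ and $E|_U = \div g|_U$, identify $\O(D)|_U = f^{-1}\O_U$ and $\O(E)|_U = g^{-1}\O_U$, and compute $f^{-1}\O_U \cdot g^{-1}\O_U = (fg)^{-1}\O_U$. The only divergence is that for the abstract isomorphism $\O(D)\otimes\O(E)\cong\O(D+E)$ the paper simply cites \cite[Prop.\ II.6.13]{Hartshorne1977}, whereas you derive it directly by observing that the multiplication map $\mu:\O(D)\otimes\O(E)\to\underline{R(X)}$ is locally an isomorphism of free rank-one modules onto $\O(D+E)$ --- a self-contained route that adds nothing beyond the same local computation.
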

\begin{proof}
    The first part is \cite[Prop II.6.13]{Hartshorne1977}. The second "Furthermore" part can be checked locally, so we can assume that $D = \div f$, $E = \div g$, and $D + E = \div fg$. Then $\O(D) = f^{-1} \cdot \O_X$, $\O(E) = g^{-1} \cdot \O_X$, and $\O(D + E) = (fg)^{-1} \O_X$. Hence we get:
    \[
    \O(D) \cdot \O(E) = f^{-1} \cdot g^{-1} \cdot \O_X = (fg)^{-1} \cdot \O_X = \O(D + E).
    \]
\end{proof}

\begin{corollary}\label{prelim:div_lin_sys:cor:dual_is_negiative}
    If $D$ is a Cartier divisor on a normal integral noetherien scheme, then $\O(-D) \cong \O(D)^\vee$   
\end{corollary}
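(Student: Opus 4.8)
The plan is to deduce the statement directly from the multiplicativity of the sheaves $\O(D)$ recorded in \autoref{prelim:div_lin_sys:claim:product_sum}, together with the standard fact that the dual of an invertible sheaf is its tensor inverse. The guiding idea is that $D$ and $-D$ add up to the trivial divisor, so the associated sheaves should multiply to the structure sheaf, and an invertible sheaf whose tensor product with $\O(D)$ is $\O_X$ must be the dual $\O(D)^\vee$.

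First I would observe that since $D$ is Cartier, so is $-D$, because the Cartier divisors form a subgroup $\CaDiv X \subset \Div X$; hence by \autoref{prelim:div_lin_sys:claim:cartier_inv} both $\O(D)$ and $\O(-D)$ are invertible. Next, applying \autoref{prelim:div_lin_sys:claim:product_sum} to the pair $D$ and $-D$ yields $\O(D) \otimes \O(-D) \cong \O\big(D + (-D)\big) = \O(0)$. It then remains to identify $\O(0)$ with $\O_X$: unwinding the definition, $\Gamma(U, \O(0)) = \{ f \in R(X) : \div f|_U \ge 0 \}$ is precisely the ring of regular functions $\Gamma(U, \O_X)$, so $\O(0) = \O_X$ as subsheaves of $\underline{R(X)}$, and in particular $\O(D) \otimes \O(-D) \cong \O_X$.

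Finally I would invoke the general fact that for an invertible sheaf $\mathcal L$ the evaluation pairing $\mathcal L^\vee \otimes \mathcal L \to \O_X$ is an isomorphism. Tensoring the relation $\O(D) \otimes \O(-D) \cong \O_X$ with $\O(D)^\vee$ and applying this isomorphism to the factor $\O(D)^\vee \otimes \O(D)$ produces $\O(-D) \cong \O(D)^\vee$. I do not expect any genuine obstacle in this argument, as it is a formal consequence of results already in place; the only points requiring a moment of care are the bookkeeping identity $\O(0) = \O_X$ and the appeal to invertibility of $\O(D)$, which licenses the cancellation in the Picard group, and both follow immediately from the preceding claims.
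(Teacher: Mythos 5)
Your proposal is correct and follows essentially the same route as the paper: both deduce $\O(D)\otimes\O(-D)\cong\O_X$ from \autoref{prelim:div_lin_sys:claim:product_sum} and then identify $\O(-D)$ with the tensor inverse $\O(D)^\vee$. You merely spell out the intermediate bookkeeping ($\O(0)=\O_X$, cancellation via the evaluation pairing) that the paper leaves implicit.
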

\begin{proof}
    $\O(D) \otimes \O(-D) \cong \O$ by the above claim, so $\O(-D)$ is inverse to $\O(D)$, hence $\O(-D) \cong \O(D)^\vee$.
\end{proof}

Now we briefly discuss the notion of the support of a divisor.
\begin{definition}
    Let $X$ be a normal noetherian integral scheme, $D = \sum_Y n_Y \cdot Y$, where $Y$ runs over all prime divisors. Then we define the \textbf{support of the divisor $D$} as the subset $\Supp D \defeq \bigcup_{n_Y \ne 0} Y$.
\end{definition}
\begin{remark}
    Since $n_Y \ne 0$ only for finitely many $Y$, the subset $\Supp D \subset X$ is closed. 
\end{remark}
\begin{remark}
    $\Supp (- D) = \Supp D$.
\end{remark}

\subsubsection{Linear Systems} 
Until the end of this subsection $R$ is a (commutative) ring

\begin{definition}
    Let $X$ be a normal noetherian integral $R$-scheme. Then a \textbf{linear system} is a pair $\mathfrak d = (V, \mathcal L)$, where $\mathcal L$ is an invertible sheaf and $V \subset \Gamma(X, \mathcal L)$ is a non-zero $R$-submodule of the global sections of $\mathcal L$.
\end{definition}

\begin{definition}
    Let $(V, \mathcal L)$, $(V', \mathcal L')$ be linear systems . A \textbf{morphism of linear systems} is a morphism of sheaves $\mathcal L \to \mathcal L'$ such that the induced map $\Gamma(X, \mathcal L) \to \Gamma(X, \mathcal L')$ maps $V$ into $V'$. A morphism of linear systems is called an \textbf{isomorphism} if it admits an inverse morphism of linear systems.
\end{definition}

\begin{claim}\label{prelim:div_lin_sys:claim:cosection}
    Let $\mathcal L$ be an invertible sheaf on a non-empty normal integral scheme $X$ and $s \in \Gamma(X, \mathcal L)$ be a non-zero section. Consider the dual cosection $s^\vee: \mathcal L^\vee \to \O$ --- it is a monomorphis and there is a unique effective Cartier divisor $D$ such that $\O(-D) = s^\vee(\mathcal L^\vee)$ (they are equal as subsheaves of $\O$).
\end{claim}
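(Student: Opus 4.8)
The plan is to trivialize $\mathcal L$, carry out everything by hand on the trivializing cover, and then glue. A global section $s \in \Gamma(X, \mathcal L)$ is the same datum as a morphism $s : \O_X \to \mathcal L$, $1 \mapsto s$, and dualizing together with the canonical identification $\O_X^\vee \cong \O_X$ gives exactly the cosection $s^\vee : \mathcal L^\vee \to \O_X$. I would fix an open cover $\{U_i\}$ with trivializations $\mathcal L|_{U_i} \cong \O_{U_i}$ under which $s|_{U_i}$ corresponds to a regular function $f_i \in \O_X(U_i)$; then both $s|_{U_i}$ and $s^\vee|_{U_i}$ are identified with multiplication by $f_i$, so that $s^\vee(\mathcal L^\vee)|_{U_i} = f_i \cdot \O_{U_i}$.

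First I would check that $s^\vee$ is a monomorphism. Because $X$ is integral, restriction to the generic point $\eta$ embeds $\Gamma(X, \mathcal L)$ into the one-dimensional $R(X)$-vector space $\mathcal L_\eta$; as $s \neq 0$ this forces $s_\eta \neq 0$, hence $f_i \neq 0$ for every $i$ (each $U_i$ contains $\eta$). Multiplication by a non-zero element of the domain $\O_X(U_i)$ is injective, so $s^\vee$ is injective on each member of the cover, and as injectivity of a sheaf morphism is local, $s^\vee$ is a monomorphism.

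Next I would produce $D$ and match the image. On overlaps the two trivializations of $\mathcal L$ differ by a unit $g_{ij} \in \O_X^\times(U_i \cap U_j)$, so $f_i = g_{ij} f_j$; thus the data $\{(U_i, f_i)\}$ glue to an effective Cartier divisor $D$ with $D|_{U_i} = \div(f_i)|_{U_i}$. The only non-formal point is the identity $\O(-\div f) = f \cdot \O_X$ for a regular $f$, and this is precisely where normality enters: by the definition of $\O(-\div f)$ one has $\Gamma(V, \O(-\div f)) = \{\, g \in R(X) : \div(g/f)|_V \ge 0 \,\}$, and Serre's criterion (Hartogs) identifies the rational functions with effective divisor on $V$ as exactly the regular ones, giving $g \in f \cdot \O_X(V)$. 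Hence $\O(-D)|_{U_i} = f_i \cdot \O_{U_i} = s^\vee(\mathcal L^\vee)|_{U_i}$, and since these agree on the cover, $\O(-D) = s^\vee(\mathcal L^\vee)$ as subsheaves of $\O_X$ (both are ideal sheaves: the right side as an image in $\O_X$, the left side because $D$ is effective).

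Finally, uniqueness follows from \autoref{prelim:div_lin_sys:claim:divisorial_eq}: if $D'$ is another effective Cartier divisor with $\O(-D') = s^\vee(\mathcal L^\vee) = \O(-D)$ as subsheaves of $\underline{R(X)}$, then $-D' = -D$, so $D' = D$. I expect the matching step --- the identity $\O(-\div f) = f \cdot \O_X$ --- to be the main obstacle, as it is the only place the normality hypothesis is actually used; the monomorphism statement and the gluing are routine bookkeeping with the cocycle $\{g_{ij}\}$.
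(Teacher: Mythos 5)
Your proof is correct, and it takes a more hands-on route than the paper's. For the monomorphism part the two arguments are essentially the same (reduce to the generic point of the integral scheme $X$, where a non-zero map of one-dimensional $R(X)$-vector spaces is injective); you phrase it through local trivializations, the paper through torsion-freeness of $\mathcal L^\vee$ and double duality. The difference is in the existence of $D$: the paper observes that $s^\vee(\mathcal L^\vee)\cong\mathcal L^\vee$ is an invertible fractional ideal subsheaf of $\O_X$ and then cites \cite[Prop II.6.13]{Hartshorne1977} as a black box to produce the effective Cartier divisor, whereas you construct $D$ explicitly by gluing the local equations $f_i$ along the cocycle $g_{ij}$ and verify the identification $\O(-\div f)=f\cdot\O_X$ by hand via algebraic Hartogs. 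Your version is more self-contained and makes visible exactly where normality (together with the standing noetherian hypothesis of the section, which Hartogs needs) enters; the paper's version is shorter at the cost of hiding that point inside the citation. Both proofs settle uniqueness identically, via \autoref{prelim:div_lin_sys:claim:divisorial_eq}. The only cosmetic remark is that, in the paper's convention where a Cartier divisor is a locally principal Weil divisor, your glued object should be read as the Weil divisor $\sum_\xi v_\xi(f_i)\cdot\xi$, well defined because the $f_i/f_j$ are units; this is what your notation $D|_{U_i}=\div(f_i)|_{U_i}$ already says.
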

\begin{proof}
    $\mathcal L^{\vee\vee} \cong \mathcal L$ and $s^{\vee\vee} = s$, so $s^\vee$ is non-zero. $\mathcal L^\vee$ is torsion-free, so we can check that $s^\vee$ is a mono only at the generic point of $X$ --- it would give a non-zero morphism $R(X) \to R(X)$, i.e. an isomorphism, in particular, a mono. Then $s^\vee(\mathcal L^\vee) \cong \mathcal L^\vee$, in particular it is invertible. By \cite[Prop II.6.13]{Hartshorne1977} there must be an effective Cartier divisor $D$ such that $\O(-D) = s^\vee(\mathcal L^\vee)$. By \autoref{prelim:div_lin_sys:claim:divisorial_eq} such divisor is unique.
\end{proof}

\begin{definition}
    Let $\mathfrak d  = (V, \mathcal L)$ be a linear system. Then for $s \in V \backslash 0$ we denote by $\div_\mathcal L s$ the effective Cartier divisor with the ideal sheaf $s^\vee(\mathcal L^\vee)$, i.e. such that $\O(-\div_\mathcal L s) = s^\vee(\mathcal L^\vee)$. For an effective Cartier divisor $D$ we write that $D \in \mathfrak d$ if there is $s \in V$ such that $D = \div_\mathcal L s$.
\end{definition}

\begin{example}
    Let $\mathcal L = \O(D)$ where $D$ is a Cartier divisor. Consider any function $f \in \Gamma(X, \O(D))$. Then $\div_\mathcal L f = D + \div f$.
\end{example}
\begin{proof}
    Local computation checks that the cosection $\O(-D) \cong \O(D)^\vee \to \O$ given by $f$ maps $g \mapsto g \cdot f$, so its image is $f \cdot \O(-D) = \O(-D - \div f)$ --- the ideal sheaf of the effective Cartier divisor $D + \div f$.
\end{proof}

\begin{claim}\label{prelim:div_lin_sys:claim:sections}
    Let $\mathcal L$ be an invertible sheaf on a non-empty normal noetherian integral scheme $X$ and $s, s' \in \Gamma(X, \mathcal L) \backslash 0$ be non-zero sections. Then $\div_\mathcal L s$ is linearly equivalent to $\div_\mathcal L s'$. Moreover, $\div_\mathcal L s = \div_\mathcal L s'$ if and only if there is an invertible function $f \in \Gamma(X, \O^\times)$ such that $s = f \cdot s'$.
\end{claim}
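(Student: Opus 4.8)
The plan is to read everything off the cosection map $s \mapsto s^\vee$ together with the dictionary between divisors and subsheaves of $\underline{R(X)}$ already assembled above. The first observation I would record is that, by \Cref{prelim:div_lin_sys:claim:cosection}, the image $s^\vee(\mathcal L^\vee) = \O(-\div_{\mathcal L} s)$ is isomorphic to $\mathcal L^\vee$; dualizing and invoking \Cref{prelim:div_lin_sys:cor:dual_is_negiative} then gives $\O(\div_{\mathcal L} s) \cong \mathcal L$, and in the same way $\O(\div_{\mathcal L} s') \cong \mathcal L$. This single isomorphism settles the first assertion at once: since $\O(\div_{\mathcal L} s) \cong \O(\div_{\mathcal L} s')$, \Cref{prelim:div_lin_sys:claim:divisorial_eq} forces $\div_{\mathcal L} s - \div_{\mathcal L} s'$ to be principal, which is exactly linear equivalence.

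For the \emph{moreover} part I would fix the standard identification $\Gamma(X, \mathcal L) = \Hom_{\O_X}(\mathcal L^\vee, \O_X)$, $s \mapsto s^\vee$, under which scaling a section by $f \in \Gamma(X, \O_X)$ corresponds to multiplying the cosection by $f$. Handling the backward direction first, suppose $s = f s'$ with $f \in \Gamma(X, \O_X^\times)$. On a normal integral scheme a global unit has trivial divisor, since both $f$ and $f^{-1}$ are regular and hence $v_\xi(f) = 0$ for every codimension-one point $\xi$; consequently, as subsheaves of $\underline{R(X)}$,
\[
s^\vee(\mathcal L^\vee) = f \cdot (s')^\vee(\mathcal L^\vee) = \O(-\div_{\mathcal L} s' - \div f) = \O(-\div_{\mathcal L} s').
\]
The two images coincide, so the uniqueness clause of \Cref{prelim:div_lin_sys:claim:cosection} gives $\div_{\mathcal L} s = \div_{\mathcal L} s'$.

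For the forward direction, suppose $\div_{\mathcal L} s = \div_{\mathcal L} s'$, so that $s^\vee$ and $(s')^\vee$ are two monomorphisms $\mathcal L^\vee \hookrightarrow \O_X$ with one and the same image $\O(-\div_{\mathcal L} s)$. Each therefore factors as an isomorphism onto that common image followed by the inclusion, and $\bigl((s')^\vee\bigr)^{-1} \circ s^\vee$ is a well-defined automorphism of $\mathcal L^\vee$. The crux is then the formal computation $\mathcal{H}om(\mathcal L^\vee, \mathcal L^\vee) \cong \O_X$, which identifies every automorphism of the invertible sheaf $\mathcal L^\vee$ with multiplication by some $f \in \Gamma(X, \O_X^\times)$; unwinding the identification back to sections yields $s^\vee = f \cdot (s')^\vee$, i.e. $s = f s'$. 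I expect no serious obstacle in any of this: the only two points that need a word of care are the fact that $\mathrm{Aut}_{\O_X}(\mathcal L^\vee) = \Gamma(X, \O_X^\times)$ and the compatibility of the section–cosection dictionary with scaling, and both are general properties of invertible sheaves rather than anything particular to the divisorial setup.
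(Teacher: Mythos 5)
Your proposal is correct and follows essentially the same route as the paper: linear equivalence via $\O(-\div_\mathcal L s) \cong \mathcal L^\vee \cong \O(-\div_\mathcal L s')$ together with \Cref{prelim:div_lin_sys:claim:divisorial_eq}, and the forward direction of the ``moreover'' part by composing the two cosections into an automorphism of $\mathcal L^\vee$ and identifying automorphisms of an invertible sheaf with global units (the paper justifies this via torsion-freeness, the generic stalk, and Hartogs, while you invoke $\mathcal{H}om(\mathcal L^\vee,\mathcal L^\vee)\cong\O_X$ --- an equivalent, slightly more formal packaging). Your explicit treatment of the backward implication, which the paper leaves implicit, is a welcome addition.
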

\begin{proof}
    We have $\O(-\div_\mathcal L s) = s^\vee(\mathcal L^\vee)$ and as we showed in the above proof the morphism $s^\vee: \mathcal L^\vee \to \O$ is a mono, in particular it is an isomorphism onto its image. Hence, $\O(-\div_\mathcal L s) \simeq \mathcal L^\vee$. The same way $\O(-\div_\mathcal L s') \simeq \mathcal L^\vee$. Therefore, $\O(-\div_\mathcal L s) \simeq \O(-\div_\mathcal L s')$, so $-\div_\mathcal L s$ is linearly equivalent to $-\div_\mathcal L s'$ by \autoref{prelim:div_lin_sys:claim:divisorial_eq} and $\div_\mathcal L s$ is linearly equivalent to $\div_\mathcal L s'$. 
    
    Now, if $\div s_\mathcal L = \div_\mathcal L s'$, then their ideal sheaves must coincide, i.e. $s^\vee(\mathcal L^\vee) = s'^\vee(\mathcal L^\vee)$. Hence, we must get an automorphism $(s^\vee)^{-1} \circ (s'^\vee): \mathcal L^\vee \to \mathcal L^\vee$. Since $\mathcal L^\vee$ is of rank 1 and is torsion-free, we have that all endomorphisms of $\mathcal L^\vee$ must come from multiplication by a rational function and all automorphisms come from multiplication by an invertible function\footnote{assume $f \in R(X)$ is such that both $f \cdot \mathcal L^\vee \subset \mathcal L^\vee$ and $f^{-1} \cdot \mathcal L^\vee \subset \mathcal L^\vee$ (as subsheaves of $\mathcal L^\vee \otimes R(X)$). Using regularity of $X$ in codimension 1 we get that $f$ is an invertible regular function in each local ring of dimension 1. Hence, by Hartogs' lemma $f \in \Gamma(X, \O^\times)$}.
\end{proof}

\begin{claim}\label{prelim:div_lin_sys:claim:sec_vanishing_criterion}
    Let $f: X' \to X$ be a morphism of non-empty normal noetherian integral schemes, $\mathcal L$ be a invertible sheaf, $s \in \Gamma(X, \mathcal L)$ be a global section. Then $f^* s = 0 \iff f(X') \subset \div_\mathcal L s$.
\end{claim}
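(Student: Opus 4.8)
The plan is to trivialize $\mathcal L$ locally, turning both the vanishing of $f^* s$ and the containment $f(X') \subset \Supp(\div_\mathcal L s)$ into statements about an ordinary regular function and its pullback, and then to use that $X'$ is integral, hence reduced, to pass from pointwise vanishing to vanishing of the entire section.

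Write $D \defeq \div_\mathcal L s$, so that by \autoref{prelim:div_lin_sys:claim:cosection} we have $\O(-D) = s^\vee(\mathcal L^\vee) \subseteq \O_X$ and $\Supp D$ is the closed set cut out by this ideal sheaf. Choose an open cover $\{U_i\}$ of $X$ on which $\mathcal L$ is trivial; over $U_i$ the section $s$ becomes a regular function $g_i \in \Gamma(U_i, \O_X)$ with $\O(-D)|_{U_i} = g_i \cdot \O_{U_i}$, so that $\Supp D \cap U_i = V(g_i)$ as topological spaces, while under the induced trivialization of $f^* \mathcal L$ the section $f^* s$ restricts on $f^{-1}(U_i)$ to the pullback function $f^\#(g_i)$. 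Since the $f^{-1}(U_i)$ cover $X'$, both sides of the asserted equivalence can be read off this cover: $f^* s = 0$ iff $f^\#(g_i) = 0$ for every $i$, and $f(X') \subset \Supp D$ iff $g_i\big(f(x')\big) = 0$ for every $i$ and every $x' \in f^{-1}(U_i)$.

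It then suffices to prove, for a single regular function $g$ on $U$ and the restricted map $f : f^{-1}(U) \to U$, that $f^\# g = 0$ iff $g$ vanishes at $f(x')$ for all $x' \in f^{-1}(U)$. At any such $x'$ the value $(f^\# g)(x')$ is the image of $g(f(x'))$ under the field extension $\kappa(f(x')) \hookrightarrow \kappa(x')$, which is injective; hence $(f^\# g)(x') = 0$ iff $g(f(x')) = 0$. This already yields the forward implication $f^* s = 0 \Rightarrow f(X') \subset \Supp D$ with no hypothesis on $X'$, since a section that is identically zero has zero value at every point.

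The reverse implication carries the only real content, and its one essential ingredient is that $X'$ — and hence each $f^{-1}(U_i)$ — is integral, in particular reduced: a regular function on a reduced scheme vanishing at every point is the zero function. Granting $f(X') \subset \Supp D$, the computation above shows $f^\#(g_i)$ vanishes at every point of $f^{-1}(U_i)$, whence $f^\#(g_i) = 0$, and therefore $f^* s = 0$. The point to watch is exactly this gap between the set-theoretic containment $f(X') \subset \Supp D$ and the scheme-theoretic vanishing of the section, which reducedness of $X'$ closes. Equivalently, one could run the whole argument through the universal property of the zero scheme $Z(s)$, whose ideal sheaf is $\O(-D)$: one has $f^* s = 0$ iff $f$ factors through $Z(s)$, and for reduced $X'$ this holds iff $f$ factors through the reduced subscheme supported on $\Supp D = |Z(s)|$, i.e. iff $f(X') \subset \Supp D$.
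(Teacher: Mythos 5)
Your argument is correct, and it takes a genuinely different route from the paper's. The paper's proof is a short global one: since $\mathcal L$ is invertible, $f^*s = 0 \iff f^*s^\vee = 0$, and by adjunction $f^*s^\vee = 0$ is equivalent to $s^\vee(\mathcal L^\vee) = \O(-\div_\mathcal L s) \subset \Ker\big(\O_X \to f_*\O_{X'}\big)$, the ideal sheaf of the (reduced, because $X'$ is reduced) scheme-theoretic image of $f$ --- which is precisely the containment $f(X') \subset \div_\mathcal L s$. You instead trivialize $\mathcal L$ on an open cover and reduce everything to the statement that a regular function on the reduced scheme $X'$ vanishing at every point is zero. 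Both proofs isolate the same essential ingredient, namely that reducedness of $X'$ bridges the set-theoretic containment in $\Supp(\div_\mathcal L s)$ and the actual vanishing of the pulled-back section; the paper packages it as ``$\Ker(\O_X \to f_*\O_{X'})$ is a radical ideal sheaf,'' you as ``the nilradical is zero.'' Yours is more elementary and makes the role of reducedness maximally visible; the paper's avoids choosing trivializations and dispatches the claim in two lines of adjunction. The only step you gloss over is the set-theoretic identification $\Supp D \cap U_i = V(g_i)$: this needs that every irreducible component of $V(g_i)$ has a codimension-one generic point (Krull's principal ideal theorem on the noetherian integral scheme $X$), so that $V(g_i)$ is indeed covered by the prime divisors occurring in $D$; it is standard, but worth a word since it is the one place where more than reducedness enters.
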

\begin{proof}
    Since $s$ is a morphism of locally free sheaves, we have that $f^* s^\vee = (f^* s)^\vee$ and that $f^*s = (f^* s^\vee)^\vee$, in particular $f^*s = 0 \iff f^* s^\vee = 0$. Now, $f^* s^\vee = 0$ if and only if $s^\vee(\mathcal L^\vee) \subset \Ker (\O_X \to f_* \O_{X'}) = \mathcal I_{f(X')}$ --- which means exactly that $f(X') \subset \div_{\mathcal L} s$.
\end{proof}

\subsection{Equivariance}\label{prelim:ssec:equivariance}
Let $G$ be a group variety over the field $k$, $X$ be a variety over $k$ and $G \times_k X \to X$ be a $k$-action. We say that an invariant of $U$ of $X$ is $G$-equivariant if the base change $U_{\bar k}$ is preserved under the action of $G(\bar k)$ on $X_{\bar k}$. We also give a more formal definition and then a few examples that we will use. One does not need to understand the formal definition to fully comprehend the examples.

By an invariant of a variety we mean the following. Let $V_k$ be a category such that $Ob(V_k)$ are morphisms $Y \to \Spec L$ , where $L$ is any field extension of $k$ and $Y$ is an $L$-scheme and arrows in $V_k$ are commutative diagrams\footnote{i.e. $V_k$ is just the comma category corresponding to $Sch \to Sch \xleftarrow{\Spec} (k/Fields)^\circ$}. Let $\mathcal C \subset V_k$ be a subcategory closed under field extensions and $T: \mathcal C^\circ \to Sets$ be a functor ($\mathcal C^\circ$ is the opposite category). An invariant of $X$ is an element $U \in T(X)$. Denote by $U_L$ the image of $U$ under $T(X) \to T(X_L)$. We say that $U$ is $G$-equivariant if for any field extension $L/k$ and for any $g \in G(L)$ we have that $T(g) (U_L) = U_L$.
\begin{example}
    Here we define the notion of a $G$-equivariant divisor on a variety. We have the natural homomorphism $\Div X \to \Div X_L$, $D \mapsto D_L$ for any field extension $L/k$. So, we say that a divisor $D \in \Div X$ is $G$-equivariant if for any $g \in G(\bar k)$ we have that $g^* D_{\bar k} = D_{\bar k}$. Clearly $G$-equivariant divisors form a subgroup of $\Div X$ -- we denote it by $\Div_G X$. Clearly the homomorphism $\Div X \to \Div X_L$ maps Cartier divisors to Cartier divisors, so we get $\CaDiv X \to \CaDiv X_L$ and define $\CaDiv_G X_L$ the same way.

    Making it formal, here $\mathcal C$ is the subcategory of $V_k$ of varieties that are regular in codimension 1 and $T = \Div$. To define $T(f): \Div Y \to \Div Y'$ for $f: Y' \to Y$ it is sufficient to define $T(f)$ on prime divisors, which is clear. Note, that since we are working with varieties, instead of checking equivariance for all field extensions, it is sufficient to check only the equivariance with respect to $\bar k$
\end{example}

\begin{example}
    Now we define $G$-equivariant subsheaf of $\O_X^{\oplus m}$. Coherent subsheaf $\mathcal F \subset \O_X^{\oplus m}$ is called $G$-equivariant if for any $g \in G(\bar k)$ the image of $\mathcal F_{\bar k} \subset \O_{X_{\bar k}}^{\oplus m}$ under $\O_{X_{\bar k}}^{\oplus m} \to g_* \O_{X_{\bar k}}^{\oplus m}$ coincides with $g_* \mathcal F_{\bar k} \subset g_* \O_{X_{\bar k}}^{\oplus m}$.

    Again, using our formalism, we can take $\mathcal C = V_k$ and $T = \{$quasicoherent subsheaves of $\O_X^{\oplus m}\}$. Note that instead of taking $\O_X^{\oplus m}$ we can take $\O_X \otimes V$, where $V$ is a fixed vector space over $k$.
\end{example}

\begin{example}
    We conclude with $G$-equivariant linear systems. Let $(V, \mathcal L)$ be a linear system on $X$. Then we naturally have $\O \otimes V \to \mathcal L$. By \autoref{prelim:div_lin_sys:claim:cosection} the dual morphism $\mathcal L^\vee \to \O \otimes V^\vee$ is a mono. We say that the linear system $(V, \mathcal L)$ is $G$-equivariant if $\Im(\mathcal L^\vee \to \O \otimes V^\vee)$ is $G$-equivariant as a subsheaf of a free sheaf.
\end{example}

\subsection{Newton Polytope Theory}\label{prelim:ssec:newton_polytopes}
In this section we give a very short overview of the Newton Polytope Theory. Essentially Newton Polytope Theory is the study of complete intersections in torus (or, more generally, in toric varieties) in terms of monomials of their equations. 

Recall that $M$ is the character lattice of $T^n$. For a Laurent polynomial $f =~\sum_{\chi \in M} c_\chi \cdot~\chi$ from $k[T^n]$ we define the support set of $f$ as follows:
\[
\Supp f \defeq \Big\{ \chi \in M\ |\ c_\chi \ne 0 \Big\}.
\]
Then we can define $k^A \defeq \{ f \in k[T^n]\ |\ \Supp f \subset A \}$ --- the space of polynomials supported at $A$.

\subsubsection{Kouchnirenko-Bernstein Theorem}
Here we recall a classical result that laid the foundations of Newton Polytope theory.

\begin{definition}
    For two subsets $A, B \subset \R^n$ we define $A + B \defeq \{ a + b\ |\ a \in A, b \in B\}$ --- \textbf{the Minkowski sum}.
\end{definition}

\begin{definition}
    Let $L$ be a lattice, i.e. $L \simeq \Z^n$. We define the \textbf{lattice volume} with respect to $L$ as the unique Euclidean volume form $\Vol_L$ on $L_\R$ such that $\Vol_L (\Delta) = 1$, where\footnote{by $\Conv$ we denote the convex hull} $\Delta = \Conv \{ 0, e_1, \dots, e_n \}$ and $e_1, \dots, e_n$ is a basis of $L$.
\end{definition}

\begin{remark}
    For any finite subset $S \subset L$ we have that $\Vol_L(\Conv S)$ is an integer because $\Conv S$ admits a triangulation by simplicies with vertices in $L$.
\end{remark}

\begin{remark}
    Recall that a polytope is the convex hull of finitely many points. One can easily see that $\Conv (A + B) = \Conv A + \Conv B$, so the sum of any two polytopes is a polytope. It means that given a real space $V$ the set of all polytopes $\operatorname{Pol}(V)$ from $V$ is naturally a monoid with the operation of Minkowski sum and $\{ 0 \}$ as the neutral element.
\end{remark}

\begin{definition}
    Let $L$ be a lattice of rank $n$. The \textbf{lattice mixed volume} with respect to $L$ is the unique function $\MVol_L: \operatorname{Pol}(L_\R)^n \to \R_+$ that satisfies:
    \begin{itemize}
        \item \textit{Linearity:} $\MVol_L(P_1 + P', P_2, \dots P_n) = \MVol_L (P_1, P_2, \dots, P_n) + \MVol_L(P', \dots, P_n)$ for all $P', P_i \in \operatorname{Pol}(L_\R)$;
        
        \item \textit{Symmetricity:} $\MVol_L(P_1, \dots, P_n) = \MVol_L(P_{\sigma(1)}, \dots, P_{\sigma(n)})$ for all $\sigma \in S_n$ and $P_i \in\operatorname{Pol}(L_\R)$;

        \item \textit{Diagonal volume:} $\MVol_L(P, \dots, P) = \Vol_L (P) \quad \forall P \in \operatorname{Pol}(V)$.
    \end{itemize}
    In other word, $\MVol_L$ is the polarization of $\Vol_L: \operatorname{Pol}(L_\R) \to \R_+$.
\end{definition}

\begin{claim}
    $\MVol_L(P_1, \dots, P_n) = \frac{1}{n!} \sum_{l = 1}^n (-1)^{n - l} \sum_{1 \le i_1 < \dots < i_l \le n} \Vol_L (P_{i_1} + \dots + P_{i_l})$.
\end{claim}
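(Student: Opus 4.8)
The claim to prove is the inclusion--exclusion formula for the mixed volume:
\[
\MVol_L(P_1, \dots, P_n) = \frac{1}{n!} \sum_{l = 1}^n (-1)^{n - l} \sum_{1 \le i_1 < \dots < i_l \le n} \Vol_L (P_{i_1} + \dots + P_{i_l}).
\]

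The plan is to exploit the characterization of $\MVol_L$ as the (unique, symmetric, multilinear) polarization of the degree-$n$ form $\Vol_L$ on the monoid of polytopes. Since the right-hand side is manifestly symmetric in the $P_i$, it suffices to show it agrees with the polarization, and the cleanest route is to feed the Minkowski combination $t_1 P_1 + \dots + t_n P_n$ (for scalars $t_i \ge 0$) into the diagonal-volume axiom and expand. First I would record that $\Vol_L$ is a homogeneous polynomial of degree $n$ in the scaling parameters: by the diagonal axiom and multilinearity of $\MVol_L$, one has
\[
\Vol_L(t_1 P_1 + \dots + t_n P_n) = \sum_{j_1, \dots, j_n = 1}^n t_{j_1} \cdots t_{j_n}\, \MVol_L(P_{j_1}, \dots, P_{j_n}),
\]
so that the coefficient of the squarefree monomial $t_1 t_2 \cdots t_n$ equals $n!\,\MVol_L(P_1, \dots, P_n)$, the factorial counting the orderings of the indices. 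This reduces the problem to extracting that coefficient from the right-hand side.

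Next I would apply the same polynomial identity to each summand on the right. For a fixed subset $S = \{i_1, \dots, i_l\}$, substituting $t_i = 1$ for $i \in S$ and $t_i = 0$ otherwise gives $\Vol_L(P_{i_1} + \dots + P_{i_l})$ as a sum of mixed volumes indexed by functions into $S$; equivalently, it is the evaluation at that $0/1$ point of the degree-$n$ polynomial $\Phi(t_1,\dots,t_n) \defeq \Vol_L(t_1 P_1 + \dots + t_n P_n)$. The right-hand side of the claim is thus
\[
\frac{1}{n!} \sum_{S \subseteq \{1,\dots,n\}} (-1)^{n - |S|}\, \Phi(\mathbf{1}_S),
\]
where $\mathbf{1}_S$ denotes the indicator point and the $l = 0$ term vanishes since $\Phi(\mathbf 0) = 0$. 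The key step is then the standard finite-difference (inclusion--exclusion) identity: for a polynomial $\Phi$ of degree at most $n$ in $n$ variables, the alternating sum $\sum_{S} (-1)^{n-|S|} \Phi(\mathbf 1_S)$ equals the coefficient of the top squarefree monomial $t_1 \cdots t_n$, because each discrete difference operator $\Delta_i \Phi(t) = \Phi(t + e_i) - \Phi(t)$ strictly lowers the degree in $t_i$ and kills any monomial not divisible by $t_i$, and the composite $\Delta_1 \cdots \Delta_n$ evaluated at the origin isolates precisely that coefficient.

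Combining the two computations, the alternating sum equals $n!\,\MVol_L(P_1,\dots,P_n)$, and dividing by $n!$ yields the claim. The main obstacle, such as it is, lies not in any single deep estimate but in setting up the bookkeeping correctly: one must justify that $\Phi$ really is a polynomial of degree exactly $n$ (this follows from multilinearity together with the diagonal axiom, both of which are built into the definition of $\MVol_L$), and one must be careful that the parameters $t_i$ range over nonnegative reals while the Minkowski-sum monoid structure only guarantees the polynomial identity on that cone --- but since a polynomial identity holding on an open subset of $\R^n$ holds identically, extracting coefficients via finite differences at integer points is legitimate. I would therefore first verify the homogeneity-of-degree-$n$ expansion of $\Phi$ carefully, then treat the finite-difference extraction as the routine combinatorial core.
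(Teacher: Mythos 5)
Your argument is correct, but it is genuinely different from what the paper does: the paper does not prove this claim at all, it simply cites Ewald (Thm.\ 3.7, p.\ 118), so your write-up supplies a self-contained proof where the text defers to a reference. Your route --- expand $\Phi(t_1,\dots,t_n) = \Vol_L(t_1P_1+\dots+t_nP_n) = \sum_{j_1,\dots,j_n} t_{j_1}\cdots t_{j_n}\,\MVol_L(P_{j_1},\dots,P_{j_n})$ via the diagonal axiom and multilinearity, observe that each summand on the right-hand side of the claim is $\Phi(\mathbf 1_S)$, and then apply the finite-difference operator $\Delta_1\cdots\Delta_n$ at the origin to isolate the coefficient $n!\,\MVol_L(P_1,\dots,P_n)$ of $t_1\cdots t_n$ --- is the standard polarization computation and is sound: for a polynomial of total degree at most $n$, each $\Delta_i$ kills monomials with $t_i$-degree zero and lowers the $t_i$-degree otherwise, so only the squarefree top monomial survives, and the $S=\varnothing$ term vanishes since $\Vol_L(\{0\})=0$. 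Two small points of hygiene, both of which you essentially flag: the expansion of $\Phi$ needs $tP = P + \dots + P$ for positive integers $t$, which uses convexity of $P$, and since the finite-difference step only ever evaluates $\Phi$ at $0/1$ points you can run the whole argument over $t_i \in \Z_{\ge 0}$ and avoid any discussion of real scalar homogeneity or density; with that simplification your worry about the cone $\R_{\ge 0}^n$ versus all of $\R^n$ disappears entirely. What your approach buys is a proof from the paper's own axiomatic definition of $\MVol_L$ (existence being presupposed), at the cost of a page of bookkeeping that the paper outsources.
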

\begin{proof}
    Cf. \cite[Thm 3.7, p.118]{Ewald1996}.
\end{proof}

\begin{remark}
    For any subsets $S_1, \dots, S_n \subset L$ we have that $\MVol(\Conv S_1, \dots, \Conv S_n)$ is an integer.
\end{remark}

\begin{theorem}[Kouchnirenko-Bernstein]
    Let $A_1, \dots, A_n \subset M$ be finite subsets of the character lattice and $\Delta_i \defeq \Conv_{M_\R} A_i$ be the corresponding Newton Polytopes. Let $k = \bar k$. Then for the general $\mathbf f \in k^{A_\bullet} \defeq k^{A_1} \times \dots \times k^{A_n}$ the square system $f_1 = \dots = f_n = 0$ has $\MVol_M(\Delta_1, \dots, \Delta_n)$ solutions in $T^n$.
\end{theorem}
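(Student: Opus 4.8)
The plan is to realise the generic solution count geometrically, as an intersection number on a complete toric compactification of $T^n$, and to identify that number with $\MVol_L$ through toric intersection theory. First I would check that the count is well defined. The incidence variety
\[
\Gamma \defeq \{ (\mathbf f, x) \in k^{A_\bullet} \times T^n \ :\ f_1(x) = \dots = f_n(x) = 0 \}
\]
projects to $k^{A_\bullet}$, a dimension count shows that the generic fibre is finite (a generic complete intersection of $n$ hypersurfaces in the $n$-dimensional $T^n$ is zero-dimensional), and the fibre cardinality is constant on a dense open subset of the base. Translation invariance is immediate: multiplying $f_i$ by a character $\chi \in M$ rescales it by a unit on $T^n$ and leaves the zero set unchanged, so the count depends only on the translation classes of the Newton polytopes $\Delta_i$, exactly as $\MVol_L$ does; symmetry in the indices is obvious.

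Next I would pass to geometry. Choose a smooth complete fan $\Sigma$ refining the normal fans of all the $\Delta_i$ and set $X \defeq X_\Sigma$. Each $\Delta_i$ is the polytope of a nef Cartier divisor $D_i$ on $X$, and the lattice points $\Delta_i \cap M \supseteq A_i$ index a basis of $\Gamma(X, \O(D_i))$; thus every $f_i \in k^{A_i}$ is a section $s_i$ of $\O(D_i)$, and the closure in $X$ of $\{f_i = 0\}$ is a component of the effective divisor $\div_{\O(D_i)} s_i$, whose class is $[D_i]$ independently of $f_i$. Consequently, whenever the closures $\overline{Z}_1, \dots, \overline{Z}_n$ of the generic zero loci meet in finitely many reduced points all lying in $T^n$, the number of solutions equals the intersection number $D_1 \cdots D_n$. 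The toric degree formula then gives $D_1 \cdots D_n = \MVol_L(\Delta_1, \dots, \Delta_n)$, with precisely the normalisation fixed above (the diagonal value being $D^n = \Vol_L(\Delta)$); see \cite{CoxLittleSchenk}.

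The genuine content — and the step I expect to be the main obstacle — is to justify the three conditions used above for generic $\mathbf f$: that $\overline{Z}_1 \cap \dots \cap \overline{Z}_n$ is finite, reduced, and disjoint from the boundary $X \setminus T^n$, so that no solutions are lost to the lower-dimensional orbits. For a cone $\tau \in \Sigma$ the restriction of the system to the orbit $\O_\tau$ is controlled by the faces of the $\Delta_i$ that $\tau$ selects; a dimension count (the Bernstein nondegeneracy condition) shows that for generic coefficients these facial subsystems have no common zero on $\O_\tau$ once $\codim \O_\tau \ge 1$, so the full intersection avoids the boundary. Finiteness and transversality inside $T^n$ follow from a Kleiman--Bertini-type argument exploiting the transitive $T^n$-action. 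In arbitrary characteristic one cannot invoke generic smoothness directly, so reducedness of the zero-dimensional intersection — i.e. that the point count, not merely the degree $D_1 \cdots D_n$, equals $\MVol_L$ — requires a separate genericity argument; this is exactly the kind of characteristic-free care carried out in \cite{Zhizhin2024}.

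Combining these steps yields the asserted equality. A more combinatorial route avoids intersection theory: one verifies that the count $N(A_1, \dots, A_n)$ is symmetric, translation invariant, and super-additive under Minkowski sums — the last because a reducible $f_1 = g \cdot h$ with $g, h$ generic of polytopes $P, Q$ has, for generic remaining data, a zero set that splits disjointly, giving $N(P + Q, \dots) \ge N(P, \dots) + N(Q, \dots)$ by upper semicontinuity of the count. Promoting super-additivity to additivity and pinning down the diagonal normalisation again reduce to the boundary-nonescape statement, so the same nondegeneracy input is the crux in either approach; once it is in hand, the inclusion--exclusion formula for $\MVol_L$ recorded above forces $N = \MVol_L$.
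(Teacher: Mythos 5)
The paper does not actually prove this theorem: its ``proof'' is a citation to Bernstein for $k = \C$ and to Kouchnirenko for general algebraically closed fields, together with the remark that the latter argument is purely algebraic. Your proposal therefore supplies an argument where the paper supplies none, and the route you choose --- compactify $T^n$ by a smooth complete fan refining the normal fans of the $\Delta_i$, realize the $f_i$ as sections $s_i$ of the nef divisors $D_i$ attached to the $\Delta_i$, reduce the count to the intersection number $D_1 \cdots D_n$, and invoke the toric formula $D_1 \cdots D_n = \MVol(\Delta_1, \dots, \Delta_n)$ --- is the standard modern proof. In characteristic zero the outline is sound, and you correctly isolate the two genuinely hard points (boundary avoidance via the facial subsystems; finiteness and reducedness inside $T^n$). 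One small imprecision: what must avoid the boundary is the full common zero locus $\div_{\O(D_1)} s_1 \cap \dots \cap \div_{\O(D_n)} s_n$, not merely the closures $\overline{Z}_i$ of the torus zero loci, since otherwise boundary components of the $\div_{\O(D_i)} s_i$ could contribute to $D_1 \cdots D_n$; the facial-subsystem dimension count you describe does deliver exactly this stronger statement.

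However, the step you defer --- reducedness of the generic zero-dimensional intersection in positive characteristic --- is not merely a missing lemma: it fails, and with it the theorem as literally stated. Take $n = 1$, $k = \overline{\F_p}$, $A_1 = \{0, p\}$. Every $f = a + b x^p$ with $b \ne 0$ has exactly one root in $T^1$ (the unique $p$-th root of $-a/b$), while $\MVol_M(\Delta_1) = \Vol_M([0,p]) = p$. So in characteristic $p$ the number of honest solutions of the general system can be strictly smaller than the mixed volume, and no genericity argument can rescue this, since the deficiency occurs for every member of $k^{A_1}$. The characteristic-free statement must either count solutions with multiplicity (i.e. compute the length of the intersection scheme) or impose a separability condition on the supports $A_i$. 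This is really a defect of the theorem as quoted in the paper (and of its claim that the cited proofs transfer verbatim to arbitrary characteristic) rather than of your strategy, but any writeup along your lines must either restrict to characteristic zero at the reducedness step or restate the conclusion with multiplicities.
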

\begin{proof}
    See \cite{bernstein} for the case $k = \C$ and for the arbitrary field see \cite{Kushnirenko1977} --- the author wrote the proof only for $k = \C$ but since it is purely algebraic the proof is valid over arbitrary algebraically closed field. In fact the proof in \cite{bernstein} also does not rely on any techinques that work exclusively in zero characteristic so it may be adapted to work in a purely algebraic setting as well.
\end{proof}

\subsection{Khovanskii Theorems and Irreducible Components}
\label{prelim:ssec:khovanskii}

\begin{definition}
    Let $X$ be a variety over $k$. Then the number of geometrically irreducible components of $X$ is the number of irreducible components of $X_{\bar k}$
\end{definition}

\begin{definition}\label{prelim:khovanskii:def:K_number}
    Let $A_1, \dots, A_m \subset M$ be finite subsets. We denote by $K_{T^n}(A_1, \dots, A_m)$ the number of geometric irreducible components of the variety cut out in $T^n$ by the equations $f_1 = \hdots = f_m = 0$ for the general $f_1, \dots, f_m$ such that $\Supp f_i \subset A_i$.
\end{definition}

\begin{definition}\label{prelim:khovanskii:def:defect}
    For a collection $A_1, \dots, A_m \subset M$ of finite subsets and a non-empty indices subset $J \subset ~\{ 1, \dots, m \}$ we define \textbf{the defect of $J$}: $\delta(J) \defeq \dim \left( \sum_{j \in J} A_j \right) -~|J|$.
\end{definition}

\begin{theorem}
    Let $A_1, \dots, A_m \subset M$ be finite subsets. Denote by $\Delta_i$ the convex hulls of $A_i$ in $M_\R$. Then:
    \begin{enumerate}
        \item If $\delta(J) > 0$ for all non-empty $J \subset \{1, \dots, m \}$, then $K_{T^n}(A_1, \dots, A_m) = 1$.

        \item If there is $J \subset \{1, \dots, m \}$ such that $\delta(J) < 0$, then $K_{T^n}(A_1, \dots, A_m) = 0$.

        \item If $\delta(J) \ge 0$ for all non-empty $J \subset \{1, \dots, m \}$ and for some non-empty subset the defect is zero, then there is the greatest subset $J_0$ such that $\delta(J_0) = 0$ and $K_{T^n}(A_1, \dots, A_m) =~\MVol_L (\Delta_j)_{j \in J_0}$, where $L$ is the minimal saturated sublattice of $M$ such that\footnote{i.e. $\chi_i \chi_j^{-1} \in L \ \forall \chi_i \in A_i$, $\chi_j \in A_j$ for any two $i, j \in J_0$.} $A_i - A_j \subset L$ for all $i, j \in J_0$
    \end{enumerate}
\end{theorem}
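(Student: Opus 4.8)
The plan is to reduce everything to a single irreducibility statement --- case (1) --- and to obtain cases (2) and (3) from it by a torus-quotient reduction together with the Kouchnirenko--Bernstein theorem. The basic object is the incidence variety
\[
Z \defeq \{(x, f_\bullet) \in T^n \times k^{A_\bullet} \ :\ f_1(x) = \dots = f_m(x) = 0\},
\]
with projections $p \colon Z \to T^n$ and $\pi \colon Z \to k^{A_\bullet}$. Since $A_i \ne \varnothing$ makes the evaluation $f_i \mapsto f_i(x)$ a nonzero functional on $k^{A_i}$ for every $x \in T^n$, the map $p$ realizes $Z$ as a vector subbundle of corank $m$ of the trivial bundle $T^n \times k^{A_\bullet}$; hence $Z$ is irreducible and $V(f_\bullet)$ is the generic fibre of $\pi$. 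The first ingredient I would isolate is a \emph{reduction lemma}: if the differences $A_j - A_j$ ($j \in J$) span a saturated sublattice $L' \subset M$ of rank $r$, then after dividing each $f_j$ by a monomial it becomes the pullback of a Laurent polynomial $g_j$ on $T^r \defeq \Spec k[L']$ along the quotient $q \colon T^n \twoheadrightarrow T^r$ dual to $L' \hookrightarrow M$. Thus any subsystem indexed by $J$ is, up to units, pulled back from the smaller torus on which its Newton data becomes full-dimensional.

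Next I would record the combinatorial backbone. Writing $L_J$ for the span of $\bigcup_{j \in J}(A_j - A_j)$, the function $J \mapsto \dim L_J$ is submodular (from $L_{J_1 \cup J_2} = L_{J_1} + L_{J_2}$ and $L_{J_1 \cap J_2} \subseteq L_{J_1} \cap L_{J_2}$), so $\delta(J) = \dim L_J - |J|$ is submodular as well. Under the hypothesis of case (3) this gives, for zero-defect $J_1, J_2$, that $\delta(J_1 \cup J_2) + \delta(J_1 \cap J_2) \le 0$ with both summands $\ge 0$; hence the zero-defect sets are closed under union and a greatest one $J_0$ exists. For case (2), apply the reduction lemma to some $J$ with $\delta(J) < 0$: one gets $|J| > r = \dim L_J$ general Laurent polynomials on $T^r$, and slicing by the base-point-free (monomials never vanish on a torus) general hypersurfaces one at a time drops the dimension by one until, after more than $r$ cuts, the locus is empty; pulling back gives $V(f_\bullet) = \varnothing$, so $K_{T^n} = 0$.

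For case (3) set $r_0 \defeq |J_0| = \rk L$ and apply the reduction lemma to $J_0$: the subsystem $\{f_j = 0 : j \in J_0\}$ becomes a \emph{square} system $\{g_j = 0\}$ on $T^{r_0} = \Spec k[L]$, which by Kouchnirenko--Bernstein has exactly $\MVol_L(\Delta_j)_{j \in J_0}$ reduced solutions $q_1, \dots, q_N$ (positivity of this mixed volume is equivalent to $\delta(J) \ge 0$ for all $J \subseteq J_0$). Every point of $V(f_\bullet)$ maps under $q$ into $\{q_1, \dots, q_N\}$, so $V(f_\bullet)$ is the disjoint union over $\ell$ of its intersections with the cosets $F_\ell \defeq q^{-1}(q_\ell) \cong T^{n - r_0}$, and on each coset it is cut out by the residual equations $\{f_i|_{F_\ell} = 0 : i \notin J_0\}$ (the $J_0$-equations vanish identically on $F_\ell$). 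The restriction map $k^{A_i} \twoheadrightarrow k^{\bar A_i}$ onto polynomials on $F_\ell$ (support $\bar A_i$ being the image of $A_i$ in $M/L$) is surjective, so a general $f_i$ restricts to a general member and each residual system is again general. A direct computation gives $\bar\delta(J') = \delta(J_0 \cup J')$ for $J' \subseteq \{1,\dots,m\}\setminus J_0$; by \emph{maximality} of $J_0$ these defects are all strictly positive, so case (1) applies on each $F_\ell$ and contributes exactly one component. Summing over $\ell$ yields $K_{T^n} = \MVol_L(\Delta_j)_{j \in J_0}$.

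It remains to prove case (1): if every $\delta(J) > 0$ then $V(f_\bullet)$ is irreducible --- note this is literally case (3) with $J_0 = \varnothing$, so it is the genuine core of the theorem. The engine here is the Bertini--Jouanolou irreducibility theorem applied inductively: starting from the irreducible $T^n$, I would cut by $V(f_1), \dots, V(f_m)$ one at a time, at each step applying irreducibility of the general member of the base-point-free system $k^{A_i}$ restricted to the (irreducible, by the inductive hypothesis) partial intersection $X$; Bertini smoothness simultaneously keeps $X$ smooth of the expected dimension, so that components coincide with connected components. The theorem applies provided the morphism $\phi_i|_X \colon X \to \mathbb P^{|A_i|-1}$ attached to $k^{A_i}$ has image of dimension $\ge 2$, and translating this nondegeneracy into the Newton data is exactly where the strict inequalities $\delta(J) > 0$ are consumed. \textbf{This last step is the main obstacle}: controlling the dimension of the image (equivalently, ruling out that a partial intersection is ``composed with a pencil'') under the \emph{symmetric} defect hypothesis is delicate, since fixing an inductive order breaks the symmetry; and in positive characteristic one must additionally verify that the constant-field extension of $k(Z)/k(k^{A_\bullet})$ is separable, lest geometric connectedness be lost to an inseparable splitting. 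This is precisely the content that \cite{Khovanskii2016} establishes over $\C$ and that \cite{Zhizhin2024} secures in arbitrary characteristic, so I would cite those for the delicate core while carrying out the reductions above in full.
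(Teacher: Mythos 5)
The paper's own ``proof'' of this theorem is a bare citation of \cite{Khovanskii2016} and \cite{Zhizhin2024}, so there is no internal argument to compare against; what you have written is a reconstruction of the architecture of the cited proofs rather than a divergence from the paper. Your reductions are essentially sound and match how those sources are organized: the incidence-variety/vector-bundle observation is the same one the paper itself uses in \autoref{count:preps:lem:intersec_dim}; the submodularity of $\delta$ (via $L_{J_1\cup J_2}=L_{J_1}+L_{J_2}$) and the resulting existence of the greatest zero-defect set $J_0$ are correct; the torus-quotient reduction, the emptiness argument in case (2), the coset decomposition in case (3) together with the identity $\bar\delta(J')=\delta(J_0\cup J')$ and the maximality of $J_0$, and the identification of case (1) as the genuine core are all right. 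Deferring case (1) to the references is no worse than what the paper does for the entire statement.

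There is, however, one real gap in your reduction, and it is not the one you flagged. In case (3) you invoke Kouchnirenko--Bernstein to produce exactly $\MVol_L(\Delta_j)_{j\in J_0}$ \emph{reduced, distinct} solutions $q_1,\dots,q_N$ of the square system on $T^{r_0}$, and you then count one irreducible component per coset $q^{-1}(q_\ell)$. Over $\C$ this is fine, but the theorem is asserted over an algebraically closed field of arbitrary characteristic, and in characteristic $p$ the general square system need not be reduced: already for $n=m=1$ and $A_1=\{0,p\}$ the general $f=c_0+c_px^p=(c_0^{1/p}+c_p^{1/p}x)^p$ has a single zero, while $\Vol_{\Z}(\Conv\{0,p\})=p$. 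So the number of \emph{distinct} solutions can be strictly smaller than the mixed volume taken with respect to the saturated lattice $L$, and the Kouchnirenko--Bernstein statement recorded in \autoref{prelim:ssec:newton_polytopes} only guarantees the count with multiplicity. This passage from ``$\MVol$ with multiplicity'' to ``$\MVol$ cosets'' is precisely one of the points where the characteristic-$p$ analysis of \cite{Zhizhin2024} must intervene; you located the inseparability issue only inside the Bertini step of case (1). If you intend to carry out the reductions yourself and cite the sources only for the irreducibility core, you must either restrict to characteristic zero or justify (or import) the claim that the relevant square system is generically reduced with respect to the lattice actually used in the formula.
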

\begin{proof}
    For the case $k = \C$ see \cite[Th. 17, Th. 19]{Khovanskii2016}, for the arbitrary field case see \cite[Th. 4.4]{Zhizhin2024}
\end{proof}
\section{Equivariant Linear Systems}\label{sec:linsys}
The ultimate goal of this section is to define the combinatorial datum of an equivariant linear system on a toric variety and then to develop the technique for restricting that datum to oribts of the toric variety. In the first two subsections we discuss the general notion of divisor pullback (\autoref{linsys:ssec:pullbacks_div}) and linear system pullback (\autoref{linsys:ssec:pullbacks_ls}) --- the material is nothing new and rather well-known or at least intuitive, but we did not find a formal and complete enough treatise of the subject in the literature. Then in \autoref{linsys:ssec:equivar_div} we discuss the equivariant toric picture, in particular we show that the divisor suuport function behaves well with respect to pullbacks. Finally, in \autoref{linsys:ssec:equivar_ls} we define the combinatiral datum of an equivariant linear system on a toric variety and show how to compute the datum of the restriction of a linear system to an orbit.

\subsection{Pullbacks of Divisors}\label{linsys:ssec:pullbacks_div}
\begin{definition}
    Let $f: X' \to X$ be a morphism of non-empty normal integral noetherian schemes and $\eta$ be the image of the generic point of $X'$. We say that a Cartier divisor $D \in \CaDiv X$ \textbf{lifts along} $f$ if $\O_X(D)_\eta = \O_{X, \eta}$, where $\O_X(D)_\eta$, $\O_{X, \eta}$ are considered as $\O_{X, \eta}$-submodules of $R(X)$. Otherwise we say that $D$ degenerates along $f$.
\end{definition}

\begin{remark}
    The idea of the above definition is rather simple. The condition that $\O_X(D)_\eta \subset \O_{X, \eta}$ means that $\eta$ (and hence $f(X)$) is not contained in a "pole" of $D$ (i.e. a prime component of $D$ with negative coefficient). Given that $\O_X(D)_\eta \subset \O_{X, \eta}$ the condition $\O_X(D)_\eta = \O_{X, \eta}$ is equivalent to the condition that $\O_X(D)_\eta$ is not contained in the maximal ideal $\O_{X, \eta}$, i.e. $\eta$ (and hence $f(X)$) is not contained in a "zero" of $D$. So we are effectively requiring that $f(X) \not \subset \Supp D$ (as proved in the theorem bellow).
\end{remark}

\begin{remark}\label{linsys:pullbacks_div:rem:principal}
    Let $f: X' \to X$ be a morphism of non-empty normal notherian integral schemes with $\eta$ being the image of the generic point of $X'$. Let $g \in R(X)^\times$. Then $\div g$ does not degenerate along $f$ if and only if $g \in \O_{X, \eta}^\times$.
\end{remark}

\begin{claim}
    If $f: X' \to X$ is a dominant morphism of non-empty normal noetherian integral schemes, then any Cartier divisor lifts long $f$.
\end{claim}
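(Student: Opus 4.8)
The plan is to unwind the definition of lifting at the generic point, where everything trivializes. Since $f$ is dominant and $X'$ is integral with generic point $\xi'$, continuity gives $\overline{f(X')} \subseteq \overline{\{f(\xi')\}} = \overline{\{\eta\}}$; as $f$ is dominant the left-hand side is all of $X$, so $\overline{\{\eta\}} = X$. Because $X$ is irreducible it has a unique generic point, and hence $\eta$ is \emph{the} generic point of $X$. In particular $\O_{X,\eta} = R(X)$, and the whole claim reduces to checking the single equality $\O_X(D)_\eta = \O_{X,\eta}$ of $\O_{X,\eta}$-submodules of $R(X)$.

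Next I would observe that $\eta \notin \Supp D$. Indeed $\Supp D = \bigcup_{n_Y \ne 0} Y$ is a finite union of prime divisors, each of codimension one; if $\eta$ lay on such a $Y$ then $X = \overline{\{\eta\}} \subseteq Y$, contradicting $\codim Y = 1$. Setting $U \defeq X \setminus \Supp D$, the restriction $D|_U$ is zero in $\Div U$, so for every open $V \subseteq U$ the defining condition $(D + \div g)|_V \ge 0$ collapses to $\div g|_V \ge 0$, i.e. to $g$ being regular on $V$ (using normality of $X$). Therefore $\O_X(D)|_U = \O_U$ as subsheaves of $\underline{R(X)}$. Taking stalks at $\eta \in U$ then yields $\O_X(D)_\eta = (\O_X)_\eta = \O_{X,\eta} = R(X)$, which is precisely the condition that $D$ lifts along $f$.

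I do not expect a genuine obstacle in this argument; the only point that needs a little care is that the conclusion is an equality of $\O_{X,\eta}$-submodules of $R(X)$ rather than an abstract isomorphism of stalks. This is handled cleanly by working throughout inside the ambient sheaf $\underline{R(X)}$: since $\O_X(D)$ and $\O_X$ were both defined as subsheaves of $\underline{R(X)}$ and literally coincide on the open set $U$ containing $\eta$, their stalks at $\eta$ coincide as subsets of $R(X)$, and no identification has to be chosen by hand.
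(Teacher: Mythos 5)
Your proof is correct and follows essentially the same route as the paper: both reduce to observing that dominance forces $\eta$ to be the generic point of $X$, where $\O_{X,\eta}=R(X)$. The only difference is cosmetic — the paper concludes immediately from the fact that $\O(D)$ is a rank-one subsheaf of $\underline{R(X)}$ (so its stalk at the generic point is a nonzero $R(X)$-submodule of $R(X)$, hence all of it), whereas you take a small detour through $\eta\notin\Supp D$; your extra paragraph is valid but not needed.
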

\begin{proof}
    Let $\eta$ be the image under $f$ of the generic point of $X'$. As $f$ is dominant, $\eta$ must be the generic point of $X$. Then for any Cartier divisor $D \in \CaDiv X$, as $\O(D)$ is of rank 1, we have that $\O(D)_\eta = R(X) = \O_{X, \eta}$.
\end{proof}

\begin{claim}\label{linsys:pullbacks_div:claim:sandwich}
    Let $f: X' \to X$ be a morphisms of non-empty normal integral noetherian schemes and $D, D', D'' \in \CaDiv X$ be Cartier divisor such that $D' \le D \le D''$ and $D'$, $D''$ lift along $f$. Then $D$ lifts along $f$
\end{claim}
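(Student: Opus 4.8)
The plan is to convert the numerical conditions $D' \le D \le D''$ into inclusions of subsheaves of $\underline{R(X)}$, localize everything at the point $\eta$ (the image of the generic point of $X'$), and then conclude by a squeeze --- which is exactly what the name \emph{sandwich} suggests. The whole content of the lifting condition is the single equality of $\O_{X,\eta}$-submodules $\O(D)_\eta = \O_{X,\eta}$ inside $R(X)$, so the strategy is to trap $\O(D)_\eta$ between two copies of $\O_{X,\eta}$ supplied by the hypotheses on $D'$ and $D''$.

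Concretely, I would first apply \autoref{prelim:div_lin_sys:rem:inequality_is_sheaf_inclusion} to each inequality: since $D' \le D \le D''$, we obtain the chain of inclusions $\O(D'') \subseteq \O(D) \subseteq \O(D')$ as subsheaves of $\underline{R(X)}$. Localizing at $\eta$ preserves inclusions, and because each of these sheaves is a subsheaf of $\underline{R(X)}$, its stalk at $\eta$ is canonically an $\O_{X,\eta}$-submodule of $R(X)$; hence $\O(D'')_\eta \subseteq \O(D)_\eta \subseteq \O(D')_\eta$ as submodules of $R(X)$. Now I invoke the hypotheses: $D'$ and $D''$ lift along $f$, which by definition means $\O(D')_\eta = \O_{X,\eta}$ and $\O(D'')_\eta = \O_{X,\eta}$. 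Substituting these in gives
\[
\O_{X,\eta} = \O(D'')_\eta \subseteq \O(D)_\eta \subseteq \O(D')_\eta = \O_{X,\eta},
\]
forcing $\O(D)_\eta = \O_{X,\eta}$, i.e. $D$ lifts along $f$.

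There is no real obstacle here; the argument is a direct squeeze. The only point requiring a moment of care is the compatibility of the two bookkeeping conventions: the inequality-to-inclusion dictionary of \autoref{prelim:div_lin_sys:rem:inequality_is_sheaf_inclusion} identifies $\O(D)$ with a genuine subsheaf of $\underline{R(X)}$, while the lifting condition is phrased in terms of the stalk $\O(D)_\eta$ viewed inside $R(X)$. These match precisely because localization of a subsheaf of $\underline{R(X)}$ at $\eta$ produces the corresponding $\O_{X,\eta}$-submodule of $R(X)$, and localization is exact and hence inclusion-preserving. Once this identification is in place, the chain of inclusions collapses immediately and the claim follows.
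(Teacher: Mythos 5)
Your proof is correct and follows exactly the paper's own argument: convert $D' \le D \le D''$ into the sheaf inclusions $\O(D'') \subset \O(D) \subset \O(D')$ via \autoref{prelim:div_lin_sys:rem:inequality_is_sheaf_inclusion}, localize at the image $\eta$ of the generic point of $X'$, and squeeze $\O(D)_\eta$ between two copies of $\O_{X,\eta}$. Nothing to add.
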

\begin{proof}
    By \autoref{prelim:div_lin_sys:rem:inequality_is_sheaf_inclusion} $\O(D'') \subset \O(D) \subset \O(D')$. Denote by $\eta$ the image of the generic point of $X'$. Then $\O_{X, \eta} = \O(D'')_\eta \subset \O(D) \subset \O(D')_\eta = \O_{X, \eta}$, hence $\O(D)_\eta =~\O_{X, \eta}$, so $D$ lifts along $f$ by definition.
\end{proof}

\begin{corollary}\label{linsys:pullbacks_div:claim:sandwich_eff}
    Let $f: X' \to X$ be a morphisms of non-empty normal integral noetherian schemes and $D, E \in \CaDiv^+ X$ be effective Cartier divisor such that $E \ge D$ and $E$ lifts along $f$. Then $D$ lifts along $f$
\end{corollary}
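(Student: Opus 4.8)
The plan is to deduce this directly from the sandwich claim \autoref{linsys:pullbacks_div:claim:sandwich}, using the zero divisor as the lower bound. The only genuine content to supply is that the trivial divisor lifts along any morphism, after which the hypothesis $E \ge D$ together with effectivity of $D$ produces exactly the two-sided bound the sandwich claim requires.

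First I would verify that $0 \in \CaDiv^+ X$ lifts along $f$. This is immediate from the definition: by construction $\O_X(0) = \O_X$ as a subsheaf of $\underline{R(X)}$, so for $\eta$ the image of the generic point of $X'$ we have $\O_X(0)_\eta = \O_{X,\eta}$, which is precisely the lifting condition. No normality or noetherian hypothesis is even needed here beyond what is already assumed.

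Next I would assemble the chain of inequalities. Since $D$ is effective we have $D \ge 0$, and by hypothesis $E \ge D$; hence
\[
0 \le D \le E.
\]
Both endpoints lift along $f$: the divisor $0$ by the observation above, and $E$ by assumption. Applying \autoref{linsys:pullbacks_div:claim:sandwich} with $D' = 0$ and $D'' = E$ then forces $D$ to lift along $f$, which is the desired conclusion.

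I do not anticipate any real obstacle, as this is a formal corollary of the preceding claim; the single point worth stating explicitly is the lifting of the zero divisor, which is what lets effectivity of $D$ play the role of the lower bound. If one wished to avoid invoking the sandwich claim as a black box, one could instead argue directly at the stalk $\O_{X,\eta}$: the inclusions $\O_X(E) \subset \O_X(D) \subset \O_X(0) = \O_X$ of \autoref{prelim:div_lin_sys:rem:inequality_is_sheaf_inclusion} sandwich $\O_X(D)_\eta$ between $\O_{X,\eta}$ and $\O_{X,\eta}$, forcing equality — but this merely reproduces the proof of \autoref{linsys:pullbacks_div:claim:sandwich} in the special case at hand, so citing the claim is cleaner.
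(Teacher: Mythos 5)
Your proposal is correct and follows exactly the paper's own argument: observe that the zero divisor lifts along any morphism, note that effectivity of $D$ gives $0 \le D \le E$, and apply \autoref{linsys:pullbacks_div:claim:sandwich}. The only difference is that you spell out the (trivial) verification that $0$ lifts, which the paper states without proof.
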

\begin{proof}
    The zero divisor $0$ lifts along any morphism. So, we have that $0 \le D \le E$ and $0, E$ lift along $f$. Hence, $D$ lifts along $f$.
\end{proof}

\begin{theorem}\label{linsys:pullbacks_div:thm:equiv}
    Let $f: X' \to X$ be a morphism of non-empty normal noetherian integral schemes and $D \in \CaDiv X$. Then the following are equivalent:
    \begin{enumerate}[1)]
        \item $D$ lifts along $f$;
        \item $-D$ lifts along $f$;
        \item there is $p \in X'$ such that $\O_X(D)_{f(p)} = \O_{X, f(p)}$;
        \item $f(X') \not \subset \Supp D$, where $\displaystyle\Supp \sum_{Y \text{ -- prime}} n_Y \cdot Y \defeq \bigcup_{n_Y \ne 0} Y$;
        \item let $\eta \in X$ be the image of the generic point of $X'$, then $\O_X(D) \to \underline{R(X)}$ factors through $\underline{\O_{X, \eta}} \to \underline{R(X)}$ and the pulled back morphism $f^* \O_X(D)\to~f^*\underline{\O_{X, \eta}} =~\underline{R(X')}$ is non-zero.
    \end{enumerate}
\end{theorem}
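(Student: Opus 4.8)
The plan is to reduce all five statements to a single pointwise fact and then to separate the sheaf-theoretic clause (5) from the purely topological ones. The fact I would isolate first is the following \emph{pointwise criterion}: for every point $x \in X$ one has $\O_X(D)_x = \O_{X,x}$ (as $\O_{X,x}$-submodules of $R(X)$) if and only if $x \notin \Supp D$. Granting this, conditions (1), (3), (4) become assertions about $\Supp D$ --- namely $\eta \notin \Supp D$, $\exists p\colon f(p) \notin \Supp D$, and $f(X') \not\subset \Supp D$ respectively --- while condition (2) reads $\eta \notin \Supp(-D)$; since $\Supp(-D) = \Supp D$, the equivalence (1) $\Leftrightarrow$ (2) is then immediate.

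To prove the pointwise criterion, the easy direction is geometric: if $x \notin \Supp D$ then $U \defeq X \setminus \Supp D$ is an open neighbourhood of $x$ on which every prime component of $D$ restricts to $0$, so $D|_U = 0$, whence $\O(D)|_U = \O_U$ and $\O(D)_x = \O_{X,x}$. For the converse I argue by contraposition. If $x \in \Supp D$, choose a prime divisor $Y_0 = \overline{\{\xi_0\}}$ with nonzero coefficient $n_0$ and $x \in Y_0$; then $\xi_0$ is a generization of $x$, so $\O_{X,\xi_0}$ is a localization of $\O_{X,x}$ and is a discrete valuation ring by Serre normality, with uniformizer $\pi$. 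Were $\O(D)_x = \O_{X,x}$, localizing at $\xi_0$ would give $\O(D)_{\xi_0} = \O_{X,\xi_0}$, contradicting $\O(D)_{\xi_0} = \pi^{-n_0}\,\O_{X,\xi_0} \ne \O_{X,\xi_0}$. (Alternatively one may apply \autoref{prelim:div_lin_sys:claim:divisorial_eq} over $\Spec \O_{X,x}$.)

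The topological comparison of the support conditions uses only that $\eta \in f(X') \subseteq \overline{\{\eta\}}$, where $\eta = f(\eta')$ for $\eta'$ the generic point of $X'$, together with the closedness of $\Supp D$. Indeed, $\Supp D$ is stable under specialization, so $\eta \notin \Supp D$ forces $f(X') \cap \Supp D = \varnothing$; conversely, any point of $f(X')$ avoiding $\Supp D$ is a specialization of $\eta$, hence $\eta \notin \Supp D$. This yields (1) $\Leftrightarrow$ (4), and since (3) and (4) both assert the existence of a point of $f(X')$ outside $\Supp D$ (apply the criterion at $f(p)$), also (3) $\Leftrightarrow$ (4).

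The delicate part, which I would handle last, is condition (5); this is where I expect the real work. I read its first clause as the inclusion $\O(D)_\eta \subseteq \O_{X,\eta}$ (no pole of $D$ passes through $\eta$). As $\O(D)_\eta$ is an invertible submodule of $R(X)$ contained in the local ring $\O_{X,\eta}$, it is free, say $\O(D)_\eta = t\,\O_{X,\eta}$ with $t \in \O_{X,\eta}$. The decisive observation is that the homomorphism $\O_{X,\eta} \to \O_{X',\eta'} = R(X')$ induced by $f$ is local, and since $f(\eta') = \eta$ realizes $\eta$ as the image of the generic point of $X'$, its kernel is exactly $\mathfrak m_\eta$. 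Hence the pulled-back morphism $f^*\O(D) \to \underline{R(X')}$ carries the generator to $f^\#(t)$, which is nonzero precisely when $t \notin \mathfrak m_\eta$, i.e. when $t$ is a unit, i.e. when $\O(D)_\eta = \O_{X,\eta}$; this gives (5) $\Leftrightarrow$ (1). The main obstacle is making the clause ``factors through $\underline{\O_{X,\eta}}$'' and the identification $f^*\underline{\O_{X,\eta}} = \underline{R(X')}$ precise at the stalk over $\eta$ --- once the kernel is pinned to $\mathfrak m_\eta$, the equivalence is forced.
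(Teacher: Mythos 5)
Your proposal is essentially the paper's argument: the pointwise criterion $\O_X(D)_x = \O_{X,x} \iff x \notin \Supp D$ is exactly what the paper proves in its implications $3) \Rightarrow 4)$ (localize at the generic point of a prime component through the point and use Serre normality) and $4) \Rightarrow 1)$ (restrict to $X \setminus \Supp D$), and your treatment of 5) via the stalk at the generic point is the same computation the paper performs with the Hom-adjunction and Nakayama's lemma, merely phrased with an explicit generator $t$ and the locality of $\O_{X,\eta} \to R(X')$. The only real deviation is cosmetic: you obtain $1) \iff 2)$ from $\Supp(-D) = \Supp D$, whereas the paper invokes $\O(-D) \cong \O(D)^\vee$.

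One assertion in your write-up is false as stated: ``$\Supp D$ is stable under specialization, so $\eta \notin \Supp D$ forces $f(X') \cap \Supp D = \varnothing$.'' Stability under specialization runs the other way: a closed set contains all specializations of its points, so from $\eta \notin \Supp D$ you cannot conclude that the specializations of $\eta$ (which is what the points of $f(X')$ are) avoid $\Supp D$ --- take $X = X' = \A^1$, $f = \id$, $D = \{0\}$. Fortunately the implication you actually need, $f(X') \not\subset \Supp D$, is immediate from $\eta = f(\eta') \in f(X')$, and your converse direction (a point of $f(X')$ outside the closed set $\Supp D$ forces its generization $\eta$ outside it) is the one that genuinely uses closedness and is correct. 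So the argument stands once that sentence is replaced by the trivial observation that $\eta$ itself lies in $f(X')$.
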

\begin{proof}
    \begin{description}
        \item[$1) \iff 2)$] Follows immediately from \autoref{prelim:div_lin_sys:cor:dual_is_negiative}.
        
        \item[$1) \implies 3)$] By definition we could take the generic point of $X'$ as $p$.
        
        \item[$3) \implies 4)$] Let $D = \sum_{Y \text{ -- prime}} n_Y \cdot Y$ and $Y$ be such that $p \in Y$ and let $\xi \in X$ be the generic point of $Y$. Then $\O_X(D)_\xi = \left(\O_X(D)_{f(p)}\right)_\xi = \O_{X, \xi}$ as $\O_{X, \xi}$-submodules of $R(X)$. In particular, $n_Y = 0$. Hence, $p \not \in \Supp D$.
        
        \item[$4) \implies 1)$] Let $\eta' \in X'$ be the generic point and $\eta \defeq f(\eta')$. Again, let $D =~\sum_{Y \text{ -- prime}} n_Y \cdot~Y$. Since $n_Y \ne 0$ only for finitely many $Y$, we have that $\Supp D$ is closed. As $\overline{\eta} = \overline{f(X)}$, if $\eta \in \Supp D$, then $f(X') \subset \Supp D$. Hence, $\eta \not \in \Supp D$. Denote $U \defeq X \backslash \Supp D$. Obviously $\O_X(D)|_U = \O_U$ (as $\O_U$-submodules of $\underline{R(X)}$). Since $\eta \in U$, we are done. 
        
        \item[$5) \iff 1)$]
        First note that for any quasi-coherent $\O_X$-module $\mathcal F$ and for any point $p\in~X$ we have the natural isomorphisms $\Hom_{\O_X}\left(\mathcal F, \underline{\O_{X, p}}\right) \cong \Hom_{\O_{X, p}}\left(\mathcal F_p, \O_{X, p}\right)$ and $\Hom_{\O_X}\left(\mathcal F, \underline{R(X)}\right) \cong~\Hom_{\O_{X, p}}\left(\mathcal F_p, R(X)\right)$. Putting $p = \eta$ and $\mathcal F = \O(D)$ we get that if $D$ does not degenerate along $f$, then we have the canonical factorization $\O_X(D) \to \underline{\O_X(D)_\eta} = \underline{\O_{X, \eta}} \to \underline{R(X)}$.
        
        Now we assume that we have the factorization $\O_X(D) \to \underline{\O_{X, \eta}} \to \underline{R(X)}$ and prove the equivalence. Like before, for any quasi-coherent $\O_{X'}$-module $\mathcal F$ we have that
        \[
        \operatorname{Hom}_{\O_{X'}}\left(\mathcal F, \underline{R(X')}\right) \cong \operatorname{Hom}_{R(X')}(\mathcal F \otimes R(X'), R(X')),
        \]
        where by $\mathcal F \otimes R(X)$ we mean the fiber\footnote{which is the same as the stalk in this case} of $\mathcal F$ at the generic point of $X'$. For any morphism of $\O_X$-modules $\mathcal G_1 \to \mathcal G_2$ we have that\footnote{the isomorphism takes place in the category of arrows of vector spaces over $R(X)$} 
        \[
        (f^* \mathcal G_1 \to f^*\mathcal G_2) \otimes R(X') \cong(\mathcal G_1 \otimes k(\eta) \to \mathcal G_2 \otimes k(\eta)) \otimes_{k(\eta)} R(X),
        \]
        where $k(\eta)$ is the residue field of the local ring $\O_{X, \eta}$. Now, $D$ lifts along $f$ $\iff$ $\O_X(D)_\eta \to \O_{X, \eta}$ is an iso $\iff$ $\O_X(D)_\eta \to \O_{X, \eta}$ is surjective $\iff$ $\O_X(D) \otimes k(\eta) \to k(\eta)$ is surjective (Nakayama's lemma) $\iff$ the morphism $(\O_X(D) \otimes k(\eta)) \otimes R(X') \to R(X')$ is surjective (field extension does not affect surjectivity) $\iff$ $(\O_X(D) \otimes k(\eta)) \otimes R(X') \to R(X')$ is non-zero ($R(X')$ is 1-dimensional over itself).
    \end{description}
\end{proof}

\begin{corollary}\label{linsys:pullbacks_div:cor:pullback_of_a_divisor_is_divisor}
    Let $f: X' \to X$ be a morphism of non-empty normal integral noetherian schemes and $D \in \CaDiv X$ be a Cartied divisor that lifts along $f$. Then by the virtue of item 5) of \autoref{linsys:pullbacks_div:thm:equiv} we get the morphism $f^* \O(D) \to \underline{R(X)}$. It is a monomorphism and its image is an invertible subsheaf of $\underline{R(X')}$.
\end{corollary}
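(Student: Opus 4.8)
The plan is to deduce the statement from two observations: that $f^*\O(D)$ is an invertible sheaf, and that any non-zero morphism from an invertible sheaf on an integral scheme into the constant sheaf $\underline{R(X')}$ is automatically a monomorphism; the assertion about the image is then formal. First I would record that $f^*\O(D)$ is invertible: since $D$ is Cartier, $\O(D)$ is invertible by \autoref{prelim:div_lin_sys:claim:cartier_inv}, and pullback preserves invertibility because $f^*\O(D) \otimes f^*\O(-D) \cong f^*\O_X = \O_{X'}$, using \autoref{prelim:div_lin_sys:cor:dual_is_negiative}. Next, since $D$ lifts along $f$, item 5) of \autoref{linsys:pullbacks_div:thm:equiv} supplies the morphism $\phi \colon f^*\O(D) \to \underline{R(X')}$ together with the information that $\phi$ is non-zero.

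The heart of the argument is to check that $\phi$ is a monomorphism, which I would verify on stalks. Let $\eta'$ denote the generic point of $X'$. Because $f^*\O(D)$ is locally free it is torsion-free, so for every $p \in X'$ the specialization map $(f^*\O(D))_p \to (f^*\O(D))_{\eta'}$ is injective; and since $\underline{R(X')}$ is constant, the stalk map $\phi_p$ factors as $(f^*\O(D))_p \hookrightarrow (f^*\O(D))_{\eta'} \xrightarrow{\phi_{\eta'}} R(X')$. It therefore suffices to see that $\phi_{\eta'}$ is injective. But $(f^*\O(D))_{\eta'}$ is a one-dimensional vector space over $\O_{X', \eta'} = R(X')$, and $\phi_{\eta'}$ is an $R(X')$-linear map to $R(X')$; it is non-zero, since if $\phi_{\eta'}$ vanished then every $\phi_p$ would vanish and $\phi$ would be zero, contradicting the previous paragraph. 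Hence $\phi_{\eta'}$ is an isomorphism and in particular injective, so $\phi_p$ is injective for every $p$ and $\phi$ is a monomorphism.

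The remaining claim is then immediate: a monomorphism identifies its source with its image, so $\im \phi \cong f^*\O(D)$ is invertible, and it is by construction a subsheaf of $\underline{R(X')}$. The main obstacle I anticipate is the bookkeeping in the middle paragraph --- namely justifying that $\phi_p$ genuinely factors through the generic stalk (which rests on the constancy of $\underline{R(X')}$ together with torsion-freeness of $f^*\O(D)$) and that global non-vanishing of $\phi$ propagates to the generic stalk; once these are pinned down, the rest is routine.
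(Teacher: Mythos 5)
Your proposal is correct and follows essentially the same route as the paper: use invertibility (hence torsion-freeness) of $f^*\O(D)$ to reduce injectivity to the generic stalk, where a non-zero $R(X')$-linear endomorphism of a one-dimensional space is an isomorphism, and then conclude that the image is an invertible subsheaf because it is isomorphic to $f^*\O(D)$. Your version merely spells out the stalk-factorization bookkeeping that the paper leaves implicit.
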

\begin{proof}
    $f^* \O_X(D)$ is invertible (as a pullback of an invertible sheaf), in particular it is torsion-free, so we can check that $f^* \O_X(D) \to \underline{R(X')}$ is mono only at the generic point where it is clear, because $\rk f^* \O_X(D) = 1$ and the morhpism $\O_X(D) \to \underline{R(X')}$ is non-zero (by item 5 of \autoref{linsys:pullbacks_div:thm:equiv}) and every non-zero morphism $R(X) \to R(X)$ is a mono. Since $f^* \O_X(D) \to \underline{R(X)'}$ is a mono, its image is isomoprhic to $f^* \O_X(D)$, so it is invertible. 
\end{proof}

\begin{definition}
    Let $f: X' \to X$ be morphism of non-empty normal noetherian integral schemes and $D \in \CaDiv X$ be a Cartier divisor that lifts along $f$. Then by the virtue of item 5) of \autoref{linsys:pullbacks_div:thm:equiv} and \autoref{linsys:pullbacks_div:cor:pullback_of_a_divisor_is_divisor} we get a mono $f^* \O_X(D) \to ~\underline{R(X')}$ --- and its image is an invertible sheaf of fractional ideal, so by \cite[Prop. II.6.13]{Hartshorne1977} we get that there must be a unique Cartier divisor on $X'$ that we denote by $f^*D \in \CaDiv X'$ such that $\Im\Big(f^* \O_X(D) \to \underline{R(X')}\Big) = \O_{X'}(f^* D)$. We call $f^*D$ \textbf{the pullback of $D$ along $f$}.
\end{definition}

\begin{remark}\label{linsys:pullbacks_div:rem:pullback_commutes_w_O}
    Let $f: X' \to X$ be a morphism of non-empty normal noetherian integral schemes and $D \in \CaDiv X$ be a Cartier divisor that lifts along $f$. Then $f^*\O(D) \cong \O(f^* D)$.
\end{remark}
\begin{proof}
    By definition $\O(f^*D)$ is the image of $f^*\O(D) \to \underline{R(X')}$ and by \autoref{linsys:pullbacks_div:cor:pullback_of_a_divisor_is_divisor} $f^* \O(D) \to \underline{R(X')}$ is a mono, so we have the natural iso $f^*\O(D) \to \O(f^* D)$.
\end{proof}

\begin{definition}
    Let $f: X' \to X$ be morphism of non-empty normal integral noetherian schemes. We denote by $\CaDiv_f X$ the set of divisors that lift along $f$.
\end{definition}

\begin{claim}
    $\CaDiv_f X$ is a subgroup of $\CaDiv X$ and $f^*: \CaDiv_f X \to \CaDiv X'$, $D \mapsto f^*D$ is a group homomorphism.
\end{claim}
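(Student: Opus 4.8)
The plan is to verify the three subgroup axioms for $\CaDiv_f X$ and then to establish additivity of $f^*$, the latter being the substantive point. Closure is immediate in two of the three cases: the zero divisor lifts along any morphism (its associated sheaf is $\O_X$, which localizes to $\O_{X,\eta}$ at the image $\eta$ of the generic point of $X'$), and closure under negation is exactly the equivalence $1)\iff 2)$ of \autoref{linsys:pullbacks_div:thm:equiv}. For closure under addition I would invoke \autoref{prelim:div_lin_sys:claim:product_sum}, which identifies $\O(D+E)$ with $\O(D)\cdot\O(E)$ as subsheaves of $\underline{R(X)}$. Localizing at $\eta$ (localization is exact and commutes with the product of fractional ideals) yields $\O(D+E)_\eta = \O(D)_\eta \cdot \O(E)_\eta = \O_{X,\eta}\cdot\O_{X,\eta} = \O_{X,\eta}$ whenever $D$ and $E$ lift, which is precisely the lifting condition for $D+E$.

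For the homomorphism property it suffices to prove $f^*(D+E) = f^*D + f^*E$, since an additive map of abelian groups is automatically a homomorphism and $\CaDiv_f X$ is already a group. By \autoref{prelim:div_lin_sys:claim:divisorial_eq} two Cartier divisors on $X'$ coincide as soon as their associated subsheaves of $\underline{R(X')}$ agree, so the identity may be checked on an open cover of $X'$. I would reduce to the principal case: for each $p'\in X'$ choose an open $U\subset X$ containing $f(p')$ on which both $D$ and $E$ are principal, say $D|_U=\div g$ and $E|_U=\div h$; this is possible because Cartier divisors are locally principal and one may intersect the two trivializing neighbourhoods.

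The observation that makes this local reduction legitimate is that any open $U\ni f(p')$ automatically contains $\eta$: since $\eta$ is the generic point of $\overline{f(X')}$ and $f(p')\in \overline{f(X')}$, the point $\eta$ is a generization of $f(p')$, and opens are stable under generization. Consequently \autoref{linsys:pullbacks_div:rem:principal} forces $g,h\in \O_{X,\eta}^\times$, so the pulled-back rational functions $f^{\#}g, f^{\#}h$ are well-defined units in $\O_{X',\eta'}^\times\subset R(X')^\times$. A direct local computation then gives $f^*(\div g)=\div(f^{\#}g)$: the sheaf $\O(\div g)=g^{-1}\O_X$ is free on the generator $g^{-1}$, whose image under $f^*\O(D)\to\underline{R(X')}$ (cf. \autoref{linsys:pullbacks_div:rem:pullback_commutes_w_O}) is $(f^{\#}g)^{-1}$, so the image is $(f^{\#}g)^{-1}\O_{X'}=\O_{X'}(\div f^{\#}g)$. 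Since $g\mapsto f^{\#}g$ is multiplicative, on $f^{-1}(U)$ we obtain $f^*(D+E)=\div f^{\#}(gh)=\div f^{\#}g + \div f^{\#}h = f^*D + f^*E$, and such opens cover $X'$.

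The main obstacle is precisely this generization observation together with the bookkeeping that the pullback commutes with restriction to $f^{-1}(U)$ and that $g\mapsto f^{\#}g$ is a well-defined group homomorphism $\O_{X,\eta}^\times\to R(X')^\times$; once these are in place the remaining computation is purely formal. I note an alternative route to closure under addition that bypasses \autoref{prelim:div_lin_sys:claim:product_sum}: using item $4)$ of \autoref{linsys:pullbacks_div:thm:equiv} one has $\Supp(D+E)\subset\Supp D\cup\Supp E$, and the irreducibility of $\overline{f(X')}$ forces it into one of the two supports if it lay in the union, contradicting that $D,E$ lift; the sheaf-theoretic argument above is, however, shorter.
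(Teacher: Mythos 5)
Your proof is correct and follows essentially the same route as the paper: closure under negation via \autoref{linsys:pullbacks_div:thm:equiv}, closure under addition by localizing the identity $\O(D+E)=\O(D)\cdot\O(E)$ of \autoref{prelim:div_lin_sys:claim:product_sum} at $\eta$, and additivity of $f^*$ by comparing fractional ideal sheaves in $\underline{R(X')}$. The only difference is cosmetic: where the paper asserts in one line that taking images in $\underline{R(X')}$ commutes with products of fractional ideals, you unpack that assertion as a local reduction to principal divisors, with the generization observation $\eta\in U$ correctly justifying the use of \autoref{linsys:pullbacks_div:rem:principal}.
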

\begin{proof}
    From \autoref{linsys:pullbacks_div:thm:equiv} we already know that $\CaDiv_f X$ is closed under taking additive inverse. So it remains to show that for $D, E \in \CaDiv_f X$ we have that $D + E \in \CaDiv_f X$ and $f^*(D + E) = f^*D + f^* E$. Let $\eta$ be the image of the generic point of $X'$. From \autoref{prelim:div_lin_sys:claim:product_sum} we know that $\O_X(D + E) = \O_X(D) \cdot \O_X(E)$, so 
    \[
    \O_X(D + E)_\eta = \O_X(D)_\eta \cdot \O_X(E)_\eta = \O_{X, \eta} \cdot \O_{X, \eta} = \O_{X, \eta}
    \]
    which means that $D + E$ lifts along $f$. Finally: 
    \begin{multline*}
        \O_{X'}(f^*(D + E)) = \Im \Big( f^*\O_X(D + E) \to \underline{R(X')}\Big) = \Im \Big( f^*\big(\O_X(D) \cdot \O_X(E)\big) \to \underline{R(X')}\Big) = \\
        = \Im \Big( f^*\O_X(D) \to \underline{R(X')}\Big) \cdot \Im \Big( f^*\O_X(E) \to \underline{R(X')}\Big) = \O_{X'}(f^*D) \cdot \O_{X'}(f^* E) = \O_{X'}(f^*D + f^*E),
    \end{multline*}
    so $f^*(D + E) = f^*D + f^* E$.
\end{proof}

\begin{claim}\label{linsys:pullbacks_div:claim:supp_lifting}
    Let $f: X' \to X$ be a morphism non-empty normal noetherian schemes, $D$ be a Cartier divisor on $X$ that lifts along $f$. Then $\Supp f^*D \subset f^{-1} (\Supp D)$. Moreover, if $D$ if effective, then $\Supp f^* D = f^{-1} (\Supp D)$.
\end{claim}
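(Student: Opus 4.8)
The plan is to prove the inclusion for an arbitrary lifting divisor by restricting to the complement of $\Supp D$, and then to upgrade it to an equality when $D$ is effective through a local computation with defining equations; the effectivity hypothesis is used exactly where one must forbid a zero from cancelling a pole after pullback. The one auxiliary fact I would isolate first is that the pullback commutes with restriction to opens: for an open $U \subset X$ with $V \defeq f^{-1}(U) \ne \varnothing$, the divisor $D|_U$ lifts along $f|_V$ and $(f^*D)|_V = (f|_V)^*(D|_U)$. This follows at once from the defining identity $\O_{X'}(f^*D) = \Im\bigl(f^*\O_X(D) \to \underline{R(X')}\bigr)$, because both the pullback of sheaves and the formation of images commute with restriction to an open, and because $R(V) = R(X')$ --- a non-empty open of the integral scheme $X'$ contains the generic point $\eta'$, so $\eta \defeq f(\eta') \in U$ and the lifting condition, which only concerns the stalk at $\eta$, is untouched.

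For the first inclusion I would take $U \defeq X \setminus \Supp D$ and $V \defeq f^{-1}(U)$. No prime component of $D$ meets $U$, so $D|_U = 0$, and therefore $(f^*D)|_V = (f|_V)^*(0) = 0$ by the compatibility above. Hence $\Supp f^*D \cap V = \varnothing$, that is $\Supp f^*D \subset X' \setminus V = f^{-1}(\Supp D)$.

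For the effective case I would pass to an open cover $\{U\}$ of $X$ on which $D$ is principal, say $D|_U = \div(h)$ with $h \in R(X)^\times$. Effectivity forces $h \in \Gamma(U, \O_X)$ and $\Supp D \cap U = V(h)$. On $V \defeq f^{-1}(U)$, assumed non-empty, the function $h \circ f$ lies in $\Gamma(V, \O_{X'})$ and is non-zero: since $D$ lifts, the generic point of $X'$ maps outside $\Supp D$, where $h$ is a unit. Unwinding the image-sheaf definition yields $(f^*D)|_V = \div(h \circ f)$, an effective divisor with $\Supp \div(h \circ f) = V(h \circ f) = f^{-1}(V(h)) = f^{-1}(\Supp D) \cap V$. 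As the $V$'s cover $X'$, these local equalities glue to $\Supp f^*D = f^{-1}(\Supp D)$.

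The crux, and the exact place where effectivity is indispensable, is the identity $\Supp \div(h \circ f) = f^{-1}(V(h))$. When $h$ is regular, ``$h$ vanishes at $f(p)$'' reads $h \in \mathfrak m_{f(p)}$, which forces $h \circ f \in \mathfrak m_p$ via the local homomorphism $\O_{X, f(p)} \to \O_{X', p}$ and hence $p \in \Supp f^*D$; this is what supplies the reverse inclusion $f^{-1}(\Supp D) \subset \Supp f^*D$. For a non-effective $D$ the local equation $h$ can carry both a zero and a pole through a point of $\Supp D$ --- for instance $h = x/y$ on $\A^2$ pulled back along the diagonal $\A^1 \to \A^2$, where $h \circ f \equiv 1$ --- and these cancel after pullback, leaving only the forward inclusion. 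This is precisely why the equality is asserted solely in the effective case.
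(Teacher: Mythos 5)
Your proof is correct and follows essentially the same route as the paper's: both reduce to a local principal equation $D|_U = \div(h)$ and exploit that $f^\sharp\colon \O_{X,f(p)} \to \O_{X',p}$ is a local homomorphism --- units pull back to units (giving the inclusion $\Supp f^*D \subset f^{-1}(\Supp D)$) and $\mathfrak m_{f(p)}$ maps into $\mathfrak m_p$ (giving the reverse inclusion precisely when $h$ is regular, i.e.\ when $D$ is effective). The only differences are presentational: you isolate the compatibility of pullback with restriction to opens, which the paper uses implicitly when localizing at $p$ and $f(p)$, and your $x/y$ example on $\A^2$ is a correct illustration of why the equality can fail without effectivity.
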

\begin{proof}
    Fix any $p \in X'$. We want to show that if $p \in \Supp f^* D$, then $p \in f^{-1} (\Supp D)$, i.e. $f(p) \in \Supp D$ and if $D$ is effective, then the converse holds as well. We could assume without loss of generality that $X' = \Spec \O_{X', p}$ and $X = \Spec \O_{X, f(p)}$ --- in particular, now $X, X'$ are affine and $D = \div g$ for some $g \in R(X)^\times$, so $D = \div f^* g$. If $p \in \Supp f^* D$, then $f^* g \in R(X') \backslash \O_{X', p}^\times$. But $f^*(\O_{X, f(p)}^\times) \subset \O_{X', p}^\times$, so $g \not \in \O_{X, f(p)}^\times$, hence $f(p) \in \Supp D$. Now, assume that $D$ is effective --- then $g \in \O_{X, f(p)}$. If $f(p) \in \Supp D$, then $g \not \in \O_{X, f(p)}^\times$. As $\O_{X, f(p)}$ is a local ring, we get that $g \in \mathfrak m_{f(p)}$. Finally, $f^*\mathfrak m_{f(p)} \subset \mathfrak m_p$, so $f^* g \not \in \O_{X', p}^\times$ and $p \in \Supp f^* D$.
\end{proof}

\begin{claim}\label{linsys:pullbacks_div:claim:functorial_non_eff}
    Let $f: X' \to X$ be a morphisms of non-empty normal noetherian integral schemes and $D \in \CaDiv X$ be a Cartier divisor that lifts along $f \circ g$. Then $D$ lifts along $f$, $f^* D$ lifts along $g$ and $(f \circ g)^* D = g^*(f^*D)$
\end{claim}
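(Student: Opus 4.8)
The statement tacitly involves a second morphism $g \colon X'' \to X'$ of non-empty normal noetherian integral schemes, so that $f \circ g \colon X'' \to X$; I write $\xi'$, $\xi''$ for the generic points of $X'$, $X''$ and set $\eta \defeq (f \circ g)(\xi'')$. The plan is to dispatch the two lifting assertions using the support criterion of \autoref{linsys:pullbacks_div:thm:equiv}, and then to prove the identity $(f \circ g)^* D = g^*(f^* D)$ by a local computation with principal divisors.

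For the first assertion I would invoke item 4) of \autoref{linsys:pullbacks_div:thm:equiv}: $D$ lifts along $f \circ g$ precisely when $(f \circ g)(X'') \not\subset \Supp D$. Since $(f \circ g)(X'') = f(g(X'')) \subset f(X')$, the hypothesis immediately forces $f(X') \not\subset \Supp D$, which is exactly the statement that $D$ lifts along $f$; in particular $f^* D$ is defined. For the second assertion I would combine the same criterion with \autoref{linsys:pullbacks_div:claim:supp_lifting}, which gives $\Supp f^* D \subset f^{-1}(\Supp D)$. Choosing (again by item 4) applied to $f \circ g$) a point $p \in X''$ with $f(g(p)) \notin \Supp D$, we get $g(p) \notin f^{-1}(\Supp D) \supset \Supp f^* D$, so $g(X'') \not\subset \Supp f^* D$ and $f^* D$ lifts along $g$ by item 4) once more.

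For the equality of divisors I would reduce to a purely local, principal computation. By \autoref{prelim:div_lin_sys:claim:divisorial_eq} it suffices to check $\O_{X''}\big((f \circ g)^* D\big) = \O_{X''}\big(g^*(f^* D)\big)$ as subsheaves of $\underline{R(X'')}$, and this may be verified on an open cover of $X''$. Fixing $p'' \in X''$ and shrinking $X$ to an open neighbourhood of $f(g(p''))$ on which $D = \div h$ for some $h \in R(X)^\times$, the two lifting statements already proved guarantee (via \autoref{linsys:pullbacks_div:rem:principal}) that $h$ is a unit in $\O_{X, \eta}$ and that its image $f^\# h \in R(X')$ is a unit in $\O_{X', g(\xi'')}$, so that the iterated function pullback $g^\#(f^\# h)$ is a well-defined element of $R(X'')^\times$. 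Unwinding the construction of the pullback divisor, one checks that on this open $f^* D = \div(f^\# h)$, hence $g^*(f^* D) = \div\big(g^\#(f^\# h)\big)$, while $(f \circ g)^* D = \div\big((f \circ g)^\# h\big)$; here \autoref{linsys:pullbacks_div:rem:pullback_commutes_w_O} identifies the relevant images inside $\underline{R(X'')}$ with these principal sheaves.

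The crux is the last step: the identity then follows from the functoriality of function pullback, namely $(f \circ g)^\# h = g^\#(f^\# h)$, which is just the compatibility of the local ring maps $\O_{X, \eta} \to \O_{X', g(\xi'')} \to \O_{X'', \xi''} = R(X'')$ induced by $f$ and $g$. I expect the only real friction to be bookkeeping: keeping straight the three generic points and verifying that the sheaf-theoretic image $\Im\big((f \circ g)^* \O(D) \to \underline{R(X'')}\big)$ genuinely coincides with $\div\big((f \circ g)^\# h\big)$ under the canonical isomorphism $g^* f^* \O(D) \cong (f \circ g)^* \O(D)$, rather than any substantive geometric difficulty.
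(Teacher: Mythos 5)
Your proposal is correct and follows essentially the same route as the paper: both lifting assertions via the support criterion of \autoref{linsys:pullbacks_div:thm:equiv} combined with \autoref{linsys:pullbacks_div:claim:supp_lifting}, and the identity $(f \circ g)^* D = g^*(f^* D)$ by local reduction to principal divisors and functoriality of function pullback. The paper's own proof is terser on the last step (it simply declares the principal case trivial), but your added bookkeeping is the intended content of that remark.
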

\begin{proof}
    If $D$ lifts along $f \circ g$, then by \autoref{linsys:pullbacks_div:thm:equiv} $f(g(X'')) \not \subset \Supp D$. Since $f(g(X'')) \subset f(X')$, we get that $f(X') \not \subset \Supp D$, so by \autoref{linsys:pullbacks_div:thm:equiv} $D$ lifts along $f$. Again, $f(g(X'')) \not \subset \Supp D$, therefore $g(X') \not \subset f^{-1}(\Supp D)$. By  \autoref{linsys:pullbacks_div:claim:supp_lifting} $\Supp f^* D \subset f^{-1}(\Supp D)$, so $g(X'') \not \subset f^* D$ and by \autoref{linsys:pullbacks_div:thm:equiv} $f^* D$ lifts along $g$. The identity $(f \circ g)^* D = g^* (f^* D)$ can be checked locally on $X$ and for principal divisors it is trivial, so we are done. 
\end{proof}

\begin{claim}\label{linsys:pullbacks_div:claim:functorial_eff}
    Let $g: X'' \to X'$, $f: X' \to X$ be morphisms of non-empty normal noetherian integral schemes and $D \in \CaDiv^+ X$ be an effective Cartier divisor. Then $D$ lifts along $f \circ g$ if and only if $D$ lifts along $f$, $f^* D$ lifts along $g$. Moreover, in that case $(f \circ g)^* D = g^*(f^*D)$
\end{claim}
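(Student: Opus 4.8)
The plan is to reduce almost everything to the preceding \autoref{linsys:pullbacks_div:claim:functorial_non_eff} and to supply only the one genuinely new implication. Indeed, the forward direction — that $D$ lifts along $f \circ g$ forces $D$ to lift along $f$ and $f^* D$ to lift along $g$ — together with the identity $(f \circ g)^* D = g^*(f^* D)$ is already established there and requires no effectiveness. So the whole content of this claim beyond the previous one is the converse implication: assuming that $D$ is effective, that $D$ lifts along $f$, and that $f^* D$ lifts along $g$, I want to conclude that $D$ lifts along $f \circ g$.

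For that converse I would translate every lifting condition through the geometric criterion of \autoref{linsys:pullbacks_div:thm:equiv}, by which a divisor lifts along a morphism $h \colon Y \to X$ if and only if $h(Y) \not\subset \Supp D$. The hypothesis that $f^* D$ lifts along $g$ then reads $g(X'') \not\subset \Supp f^* D$. This is precisely where effectiveness enters: since $D \in \CaDiv^+ X$ is effective and lifts along $f$, the ``moreover'' part of \autoref{linsys:pullbacks_div:claim:supp_lifting} upgrades the general inclusion $\Supp f^* D \subset f^{-1}(\Supp D)$ to the equality $\Supp f^* D = f^{-1}(\Supp D)$. Substituting, we obtain $g(X'') \not\subset f^{-1}(\Supp D)$, meaning there is a point $p \in X''$ with $f(g(p)) \notin \Supp D$, hence $(f \circ g)(X'') \not\subset \Supp D$. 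By \autoref{linsys:pullbacks_div:thm:equiv} this says exactly that $D$ lifts along $f \circ g$, as desired. With lifting along $f \circ g$ now in hand, the equality $(f \circ g)^* D = g^*(f^* D)$ follows immediately from \autoref{linsys:pullbacks_div:claim:functorial_non_eff}.

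The one delicate point — and the structural reason the non-effective version only yields a single direction — is that this converse genuinely fails without effectiveness. In the general case only the inclusion $\Supp f^* D \subset f^{-1}(\Supp D)$ is available, so $g(X'') \not\subset \Supp f^* D$ does \emph{not} force $g(X'') \not\subset f^{-1}(\Supp D)$: the pullback could wipe out poles of $D$ over $f(X')$, shrinking its support strictly inside the preimage. Effectiveness is exactly the hypothesis that makes $\Supp f^* D$ fill out the entire preimage $f^{-1}(\Supp D)$, which is what lets the argument close. I do not expect any serious computation here; the whole proof is a short chain of the two cited results plus the support-equality, and the only thing worth stating explicitly is where and why effectiveness is used.
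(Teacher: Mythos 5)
Your proof is correct and follows essentially the same route as the paper: reduce the forward direction and the composition identity to \autoref{linsys:pullbacks_div:claim:functorial_non_eff}, then prove the converse by combining the support criterion of \autoref{linsys:pullbacks_div:thm:equiv} with the equality $\Supp f^*D = f^{-1}(\Supp D)$ from \autoref{linsys:pullbacks_div:claim:supp_lifting}, which is exactly where effectiveness is used. Your added remark on why the converse fails without effectiveness is a nice clarification but not needed for the argument.
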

\begin{proof}
    Given \autoref{linsys:pullbacks_div:claim:functorial_non_eff} we only need to check that if $D$ lifts along $f$ and $f^*D$ lifts along $g$, then $D$ lifts along $f \circ g$. Since $f^*D$ lifts along $g$, by \autoref{linsys:pullbacks_div:thm:equiv} $g(X'') \not \subset~\Supp f^* D$. By \autoref{linsys:pullbacks_div:claim:supp_lifting} $\Supp f^*D = f^{-1}(\Supp D)$. So, $g(X'') \not \subset~f^{-1}(\Supp D)$ implies that $f(g(X'') \not \subset \Supp D$, thus by \autoref{linsys:pullbacks_div:thm:equiv} we are done. 
\end{proof}

\begin{claim}\label{linsys:pullbacks_div:claim:immersion_closure}
    Let $X$ be a non-empty normal noetherian integral scheme, $Z \xhookrightarrow{} X$ be an immersion\footnote{i.e. composition of an open embedding and a close embedding, by \citestacks{01QV} the order of embeddings is not important} (with $Z$ being integral) and $D \in \CaDiv X$ be a Cartier divisor. Then $D$ lifts along $Z \xhookrightarrow{} X$ if and only if $D$ lifts along $\bar Z \xhookrightarrow{} X$.
\end{claim}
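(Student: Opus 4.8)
The plan is to observe that the two immersions $Z \hookrightarrow X$ and $\bar Z \hookrightarrow X$ give rise to the same point $\eta \in X$ --- the image of the generic point --- and that the defining condition of lifting, namely $\O_X(D)_\eta = \O_{X,\eta}$, depends only on this $\eta$. Thus the two lifting conditions turn out to be literally the same condition, and the equivalence is immediate.

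First I would pin down the generic points. Since $Z$ is integral and locally closed in $X$, its closure $\bar Z$ (with the reduced induced structure) is irreducible and reduced, hence integral; moreover the inclusion $Z \hookrightarrow \bar Z$ is a dominant open immersion, being the intersection of $\bar Z$ with the open subscheme through which $Z \hookrightarrow X$ factors. A dominant open immersion of integral schemes identifies generic points, so $Z$ and $\bar Z$ share a single generic point $\zeta$. Consequently the image of the generic point of $Z$ under $Z \hookrightarrow X$ and the image of the generic point of $\bar Z$ under $\bar Z \hookrightarrow X$ are one and the same point $\eta \in X$, since $Z \hookrightarrow \bar Z \hookrightarrow X$ is $Z \hookrightarrow X$.

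With this in hand the conclusion is immediate: by definition $D$ lifts along $Z \hookrightarrow X$ if and only if $\O_X(D)_\eta = \O_{X,\eta}$, and likewise $D$ lifts along $\bar Z \hookrightarrow X$ if and only if $\O_X(D)_\eta = \O_{X,\eta}$, which is the very same equation. Equivalently, one may invoke item 4) of \autoref{linsys:pullbacks_div:thm:equiv} together with the fact that $\Supp D$ is closed: then $Z \subset \Supp D$ if and only if $\bar Z \subset \Supp D$, since $\bar Z$ is the smallest closed subset containing $Z$, and negating both sides yields the equivalence of the liftings.

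There is no substantive obstacle here; the argument is essentially bookkeeping. The only point worth double-checking is the claim that $Z \hookrightarrow \bar Z$ is a dominant open immersion and hence that the generic points coincide --- this rests on the Stacks-project fact cited in the statement, namely that an immersion is a closed immersion into an open subscheme, so that $Z = \bar Z \cap U$ for a suitable open $U \subset X$, together with the observation that $Z$ is dense in $\bar Z$ by the very construction of the closure.
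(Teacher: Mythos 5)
Your argument is correct and is exactly the paper's proof: the paper's entire justification is that the image of the generic point of $Z$ coincides with that of $\bar Z$, which you establish (with more detail) via the dominant open immersion $Z \hookrightarrow \bar Z$. No discrepancies.
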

\begin{proof}
    The image of the generic point of $Z$ is the same as the image of the generic point of $\bar Z$ 
\end{proof}

\subsection{Pullbacks of Linear Systems}\label{linsys:ssec:pullbacks_ls}

\begin{definition}
    Let $f: X' \to X$ be a morphism of non-empty normal integral noetherian schemes. We say that \textbf{the linear system $\mathfrak d = (V, \mathcal L)$ lifts along $f$} if the natural morphism $V \to \Gamma(X', f^* \mathcal L)$ is non-zero. Otherwise we say that \textbf{$\mathfrak d$ degenerates along $f$}.
\end{definition}

\begin{definition}
    Let $f: X' \to X$ be a morphism of non-empty normal noetherian integral schemes and $\mathfrak d = (V, \mathcal L)$ be a linear system on $X$ that lifts along $f$. Then we define the \textbf{pullback of} $\mathfrak d$ as the linear system $f^* \mathfrak d \defeq (V', f^*\mathcal L)$, where $V' \defeq \Im(V \to \Gamma (X', f^* \mathcal L))$.
\end{definition}

\begin{claim}\label{linsys:pullbacks_linsys:claim:criterion}
    Let $f: X' \to X$ be morphism of non-empty normal integral noetherian schemes and $\mathfrak d$ be a linear system on $X$. Then $\mathfrak d$ lifts along $f$ if and only if there is $D \in \mathfrak d$ such that $D$ lifts along $f$.
\end{claim}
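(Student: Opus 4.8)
The plan is to unwind both notions of ``lifting'' into a single statement about the non-vanishing of pulled-back sections, and then to quote the two criteria already proved: the section-vanishing criterion \autoref{prelim:div_lin_sys:claim:sec_vanishing_criterion} and the equivalence theorem \autoref{linsys:pullbacks_div:thm:equiv}.

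First I would recall that, by definition, $\mathfrak d = (V, \mathcal L)$ lifts along $f$ precisely when the natural map $V \to \Gamma(X', f^*\mathcal L)$, $s \mapsto f^* s$, is non-zero, i.e. when there exists $s \in V \setminus 0$ with $f^* s \ne 0$. On the other side, an effective Cartier divisor $D$ lies in $\mathfrak d$ exactly when $D = \div_{\mathcal L} s$ for some $s \in V \setminus 0$, so the right-hand condition ``there is $D \in \mathfrak d$ that lifts along $f$'' translates to ``there is $s \in V \setminus 0$ such that $\div_{\mathcal L} s$ lifts along $f$.'' Thus it suffices to establish, for a single fixed $s \in V \setminus 0$, the equivalence
\[
f^* s \ne 0 \iff \div_{\mathcal L} s \text{ lifts along } f,
\]
after which the two existential quantifiers match up and the claim follows.

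To prove this bridging equivalence I would chain the two cited results. By \autoref{prelim:div_lin_sys:claim:sec_vanishing_criterion} we have $f^* s = 0 \iff f(X') \subset \div_{\mathcal L} s$, where the containment is set-theoretic (into the support). Since $\div_{\mathcal L} s$ is effective, its support agrees with its underlying closed set, so negating gives $f^* s \ne 0 \iff f(X') \not\subset \Supp \div_{\mathcal L} s$. Finally, the equivalence of items 1) and 4) of \autoref{linsys:pullbacks_div:thm:equiv} reads $\div_{\mathcal L} s \text{ lifts along } f \iff f(X') \not\subset \Supp \div_{\mathcal L} s$. Composing the two equivalences yields the bridge, and hence the claim.

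There is no genuinely hard step here; the work is entirely definitional bookkeeping. The only point demanding a little care is aligning the two containment conditions: the hypothesis of \autoref{prelim:div_lin_sys:claim:sec_vanishing_criterion} phrases degeneration as set-theoretic containment of $f(X')$ in the effective divisor $\div_{\mathcal L} s$, whereas \autoref{linsys:pullbacks_div:thm:equiv} phrases it via the support, so reconciling them is exactly the observation that an effective divisor and its support have the same underlying set. I would also be slightly careful that the existential quantifier hidden in the linear-system definition (``the map is non-zero'', i.e. \emph{some} $s$ works) is the same existential quantifier appearing on the divisor side (``there is $D \in \mathfrak d$''), which is what makes the per-section equivalence globalize immediately.
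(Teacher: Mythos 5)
Your proof is correct. It shares the paper's skeleton — both arguments reduce the claim to the per-section equivalence $f^*s \ne 0 \iff \div_{\mathcal L} s$ lifts along $f$, after which the two existential quantifiers match up — but you establish that bridge by a different route. The paper proves it by direct stalk computations: in the forward direction it passes to the germ of $s$ at the image $\eta$ of the generic point of $X'$ and applies Nakayama's lemma to get $\O(-\div_{\mathcal L} s)_\eta = \O_{X,\eta}$, and in the converse it uses item 3) of \autoref{linsys:pullbacks_div:thm:equiv} to produce a point $p$ at which $s \otimes k(f(p))$ is an isomorphism and then base-changes to $k(p)$. You instead chain \autoref{prelim:div_lin_sys:claim:sec_vanishing_criterion} with the equivalence 1) $\iff$ 4) of \autoref{linsys:pullbacks_div:thm:equiv}, routing both directions through the single set-theoretic condition $f(X') \not\subset \Supp \div_{\mathcal L} s$. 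This is shorter and avoids redoing the Nakayama argument; the one point that genuinely needs care — reconciling the containment $f(X') \subset \div_{\mathcal L} s$ of \autoref{prelim:div_lin_sys:claim:sec_vanishing_criterion} (an inclusion of the reduced closure of $f(X')$ into the divisor's ideal) with the set-theoretic condition $f(X') \subset \Supp \div_{\mathcal L} s$ for an effective divisor — is exactly the point you flag, and your justification for it is adequate.
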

\begin{proof}
    Assume that $\mathfrak d = (V, \mathcal L)$ lifts along $f$. Then there is $s \in V$ such that $f^*s$ is a non-zero section of $f^* \mathcal L$. Let $\eta' \in X'$ be the generic point and $\eta \defeq f(\eta')$. We know that $f^*s_{\eta'} \ne 0$. As $\O_{X', \eta'} = R(X')$, we have that $f^*s_{\eta'} = f^*s \otimes k(\eta') = f^*(s \otimes k(\eta))$, so $s \otimes k(\eta)$ is non-zero. $\mathcal L \otimes k(\eta) \simeq k(\eta)$ and $s \otimes k(\eta): k(\eta) \to \mathcal L \otimes k(\eta)$ is non-zero, so it is an isomorphism, hence $s^\vee \otimes k(\eta) = (s \otimes k(\eta))^\vee: \mathcal L^\vee \otimes k(\eta) \to k(\eta)$ is an iso. Now, we have that $\O_X(-\div_\mathcal L s) \subset \O_X$ and that the inclusion (which is just $s^\vee$) becomes an isomorphism when tensored by $k(\eta)$. By Nakayama's lemma $\O_X(-\div_\mathcal L s)_\eta = \O_{X, \eta}$. Hence, $-\div_\mathcal L s$ lifts along $f$, so by \autoref{linsys:pullbacks_div:thm:equiv} $\div_\mathcal L s$ lifts along $f$ (and $\div_\mathcal L s \in \mathfrak d$).

    Now we prove the converse. Assume that there is $D \in \mathfrak d$ that lifts along $f$. As before, $\mathfrak d = (V, \mathcal L)$. Let $s \in V$ be such that $D = \div_\mathcal L s$. By \autoref{linsys:pullbacks_div:thm:equiv} $-D$ lifts along $f$ as well. By \autoref{linsys:pullbacks_div:thm:equiv} there is $p \in X'$ such that $\O_X(-D)_{f(p)} = \O_{X, f(p)}$. The inclusion $\O_X(-D) \subset \O_X$ is $s^\vee: \mathcal L^\vee \to \O_X$, so we get that $s_{f(p)}: \O_{X, f(p)} \to \mathcal L_{f(p)}$ is an iso. Hence, $s \otimes k(f(p)): k(f(p)) \to \mathcal L \otimes k(f(p))$ is an iso. Base change of an isomorphism is an isomorphism, thus $f^*s \otimes k(p) = \Big(s \otimes k(f(p))\Big)\otimes_{k(f(p))} k(p)$ is an isomorphism. It means that $f^* s$ is non-zero at $p$, so $f^*s$ is a non-zero section of $f^* \mathcal L$, hence $\mathfrak d$ lifts along $f$.
\end{proof}

\begin{theorem}
    Let $f: X' \to X$ be a morphism of non-empty normal noetherian integral schemes and $\mathfrak d = (V, \mathcal L)$ be a linear system. Then the following are equivalent:
    \begin{enumerate}
        \item $\mathfrak d$ lifts along $f$;
        \item There is $p \in X'$ such that $V \to \mathcal L \otimes k(f(p))$ is non-zero;
        \item $f(X') \not \subset \bigcap_{D \in \mathfrak d} D$.
    \end{enumerate}
\end{theorem}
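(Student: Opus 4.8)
The plan is to establish $1) \Leftrightarrow 3)$ and $1) \Leftrightarrow 2)$ separately, in each case by directly unwinding the definition of lifting for linear systems and invoking the section-vanishing criterion (\autoref{prelim:div_lin_sys:claim:sec_vanishing_criterion}) together with standard facts about invertible sheaves on an integral scheme.

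For $1) \Leftrightarrow 3)$ I would argue by contraposition. By definition $\mathfrak d = (V, \mathcal L)$ degenerates along $f$ exactly when the map $V \to \Gamma(X', f^*\mathcal L)$ is zero, i.e. when $f^*s = 0$ for every $s \in V$. By \autoref{prelim:div_lin_sys:claim:sec_vanishing_criterion}, $f^*s = 0$ is equivalent to $f(X') \subset \div_{\mathcal L} s$, and as $s$ ranges over $V \setminus 0$ the divisors $\div_{\mathcal L} s$ are precisely the members $D \in \mathfrak d$. Hence requiring $f^*s = 0$ for all $s$ is the same as $f(X') \subset \bigcap_{D \in \mathfrak d} D$. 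Negating both sides gives $1) \Leftrightarrow 3)$.

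For $1) \Leftrightarrow 2)$ the key point is pointwise detection of nonvanishing. The morphism $V \to \Gamma(X', f^*\mathcal L)$ is non-zero iff some $s \in V$ has $f^*s \neq 0$; and since $X'$ is integral and $f^*\mathcal L$ is invertible, hence torsion-free, a global section of $f^*\mathcal L$ is non-zero iff it is non-zero at some point $p \in X'$ (equivalently, at the generic point, where the stalk is a one-dimensional vector space over $\mathcal O_{X',\eta'} = k(\eta')$). I would then use the base-change identification $f^*\mathcal L \otimes k(p) \cong \bigl(\mathcal L \otimes k(f(p))\bigr) \otimes_{k(f(p))} k(p)$, under which $(f^*s) \otimes k(p)$ corresponds to $s \otimes k(f(p))$ extended along the field inclusion $k(f(p)) \hookrightarrow k(p)$. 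As a field extension is faithfully flat, it cannot annihilate a nonzero vector of a one-dimensional space, so $(f^*s)\otimes k(p) \neq 0 \iff s \otimes k(f(p)) \neq 0$, which is exactly the condition that $V \to \mathcal L \otimes k(f(p))$ is non-zero at $p$. Chaining these equivalences delivers $1) \Leftrightarrow 2)$.

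There is no serious obstacle here: the statement is a straight translation among three ways of saying that $\Im f$ avoids the common base locus of $\mathfrak d$. The only step requiring genuine care is the fiber identification in $1) \Leftrightarrow 2)$ — one must verify that pulling back $s$ and then reducing modulo $\mathfrak m_p$ agrees with evaluating $s$ at $f(p)$ and extending scalars to $k(p)$, and that this scalar extension preserves nonvanishing. Everything else is bookkeeping against the definition of lifting and the already-established \autoref{prelim:div_lin_sys:claim:sec_vanishing_criterion}.
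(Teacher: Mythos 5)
Your proof is correct, but it takes a different route from the paper. The paper disposes of this theorem in one line by combining two results it has already established: \autoref{linsys:pullbacks_div:thm:equiv} (the list of equivalent conditions for a single Cartier divisor to lift, including the condition $f(X') \not\subset \Supp D$ and the fiberwise condition at a point $p$) and \autoref{linsys:pullbacks_linsys:claim:criterion} (a linear system lifts iff some member $D \in \mathfrak d$ lifts). You instead bypass the reduction to single divisors entirely and argue at the level of sections throughout: for $1) \Leftrightarrow 3)$ you invoke \autoref{prelim:div_lin_sys:claim:sec_vanishing_criterion} to translate $f^*s = 0$ into $f(X') \subset \div_{\mathcal L} s$ and intersect over $s \in V \setminus 0$, and for $1) \Leftrightarrow 2)$ you redo by hand the fiber base-change computation $(f^*s)\otimes k(p) \cong \bigl(s \otimes k(f(p))\bigr)\otimes_{k(f(p))} k(p)$ together with the torsion-freeness of an invertible sheaf on an integral scheme --- which is essentially the content of the paper's proof of \autoref{linsys:pullbacks_linsys:claim:criterion}, reproduced inline. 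Both arguments are sound. What your version buys is self-containedness and a cleaner derivation of item 3 (the base-locus description falls out directly from the section-vanishing criterion rather than passing through $\Supp D$ for effective divisors); what the paper's version buys is brevity, since the divisor-level theorem and the criterion claim carry all the weight and are reused elsewhere. Your handling of the one delicate point --- that ``nonzero in the fiber at some point'' and ``nonzero as a section'' agree because the stalk and fiber coincide at the generic point, and that a field extension cannot kill a nonzero vector --- is exactly right, and you correctly exclude $s = 0$ when identifying the members of $\mathfrak d$ with the divisors $\div_{\mathcal L} s$.
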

\begin{proof}
    Follows immediately from \autoref{linsys:pullbacks_div:thm:equiv} and \autoref{linsys:pullbacks_linsys:claim:criterion}.
\end{proof}

\begin{claim}\label{linsys:pullbacks_linsys:claim:immersion_closure}
    Let $g: X''\to X'$, $f: X' \to X$ be morphisms of non-empty normal integral noetherian schemes and $\mathfrak d$ be a linear system on $X$. Then $\mathfrak d$ lifts along $f \circ g$ if and only if $\mathfrak d$ lifts along $f$ and $f^*\mathfrak d$ lifts along $f$. Moreover, in that case $(f \circ g)^* \mathfrak d \cong g^*(f^* \mathfrak d)$
\end{claim}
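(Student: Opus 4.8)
The plan is to reduce everything to the behaviour of the underlying sections, using the standard fact that sheaf pullback is a pseudofunctor: there is a canonical isomorphism $\phi\colon (f\circ g)^*\mathcal L \xrightarrow{\ \sim\ } g^*f^*\mathcal L$ of invertible sheaves on $X''$ that intertwines the section-pullback maps, i.e. $\phi\bigl((f\circ g)^*s\bigr)=g^*(f^*s)$ for every $s\in\Gamma(X,\mathcal L)$. Recall that, writing $\mathfrak d=(V,\mathcal L)$, the linear system lifts along a morphism precisely when some $s\in V$ has non-zero pullback, and that $f^*\mathfrak d=(V',f^*\mathcal L)$ with $V'=\Im\bigl(V\to\Gamma(X',f^*\mathcal L)\bigr)$. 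With these two ingredients in hand, both the equivalence and the ``moreover'' part become pure bookkeeping.

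For the equivalence I would argue directly on sections. If $\mathfrak d$ lifts along $f\circ g$, choose $s\in V$ with $(f\circ g)^*s\neq 0$; then $g^*(f^*s)=\phi\bigl((f\circ g)^*s\bigr)\neq 0$, which forces $f^*s\neq 0$, so $\mathfrak d$ lifts along $f$. Setting $t\defeq f^*s\in V'$ we then have $g^*t\neq 0$, so $f^*\mathfrak d$ lifts along $g$. Conversely, if $\mathfrak d$ lifts along $f$ (so that $f^*\mathfrak d$ is defined) and $f^*\mathfrak d$ lifts along $g$, pick $t\in V'$ with $g^*t\neq 0$; by definition of $V'$ we may write $t=f^*s$ for some $s\in V$, and then $(f\circ g)^*s=\phi^{-1}(g^*t)\neq 0$, so $\mathfrak d$ lifts along $f\circ g$.

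For the isomorphism I would take $\phi$ itself as the candidate morphism of linear systems. On the source the relevant subspace is $V''\defeq\Im\bigl(V\to\Gamma(X'',(f\circ g)^*\mathcal L)\bigr)=\{(f\circ g)^*s : s\in V\}$, while on the target it is $\Im\bigl(V'\to\Gamma(X'',g^*f^*\mathcal L)\bigr)=\{g^*(f^*s):s\in V\}$, using $V'=\{f^*s:s\in V\}$. The intertwining relation $\phi\bigl((f\circ g)^*s\bigr)=g^*(f^*s)$ shows that $\phi$ carries $V''$ bijectively onto the target subspace, so $\phi$ is an isomorphism of linear systems $(f\circ g)^*\mathfrak d\xrightarrow{\ \sim\ }g^*(f^*\mathfrak d)$.

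The only point genuinely requiring care --- the ``main obstacle,'' such as it is --- is the coherence statement that the canonical isomorphism $\phi$ intertwines the two section-pullback maps; everything else is formal. This is the naturality of the pullback functors and is standard, but since the whole argument rests on it I would state it explicitly. Alternatively, one could sidestep it by running the divisor-level functoriality of \autoref{linsys:pullbacks_div:claim:functorial_non_eff} through the criterion \autoref{linsys:pullbacks_linsys:claim:criterion}, at the cost of separately identifying the divisor of a pulled-back section with the pullback of its divisor.
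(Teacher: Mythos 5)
Your proof is correct, but it takes a genuinely different route from the paper. The paper disposes of this claim in one line, by combining the divisor-level functoriality of \autoref{linsys:pullbacks_div:claim:functorial_eff} with the criterion of \autoref{linsys:pullbacks_linsys:claim:criterion} that a linear system lifts along a morphism iff some member divisor does --- exactly the alternative you sketch in your last sentence. You instead argue entirely at the level of sections, using the pseudofunctoriality isomorphism $\phi\colon (f\circ g)^*\mathcal L \xrightarrow{\sim} g^*f^*\mathcal L$ and the fact that it intertwines the section-pullback maps. Your version has the advantage of proving the ``moreover'' isomorphism of linear systems explicitly (the paper's one-line proof does not really address it, and in fact the divisor route silently needs the compatibility $\div_{f^*\mathcal L} f^*s = f^*\div_{\mathcal L} s$, which you correctly flag as the hidden cost of that approach); the paper's version has the advantage of reusing already-proved lemmas and avoiding any appeal to coherence of pullbacks. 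Both are sound. One housekeeping remark: the statement as printed says ``$f^*\mathfrak d$ lifts along $f$,'' which is evidently a typo for ``lifts along $g$''; you proved the intended statement.
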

\begin{proof}
    Follows immediately from \autoref{linsys:pullbacks_div:claim:functorial_eff} and \autoref{linsys:pullbacks_linsys:claim:criterion}.
\end{proof}

\begin{claim}\label{linsys:pullbacks_linsys:claim:immersion_closure}
    Let $X$ be a non-empty normal noetherian integral scheme, $Z \xhookrightarrow{} X$ be an immersion (with $Z$ being integral) and $\d$ be a linear system on $X$. Then $\d$ lifts along $Z \xhookrightarrow{} X$ if and only if $\d$ lifts along $\bar Z \xhookrightarrow{} X$.
\end{claim}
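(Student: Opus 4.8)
The plan is to deduce this from its divisor-level counterpart, \autoref{linsys:pullbacks_div:claim:immersion_closure}, by passing through the lifting criterion \autoref{linsys:pullbacks_linsys:claim:criterion}. The guiding principle is that lifting of a linear system along a morphism has already been reduced to the lifting of a single well-chosen divisor, and that lifting of a divisor along an immersion has already been shown to be insensitive to replacing $Z$ by its closure. Thus the present claim is essentially the composite of two results already in hand, and I expect the argument to be a short chain of equivalences rather than anything genuinely new.

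Concretely, I would argue as follows. First apply \autoref{linsys:pullbacks_linsys:claim:criterion} to the immersion $Z \hookrightarrow X$: the system $\d$ lifts along $Z \hookrightarrow X$ precisely when some divisor $D \in \d$ lifts along $Z \hookrightarrow X$. Next, for each such $D$, \autoref{linsys:pullbacks_div:claim:immersion_closure} says that $D$ lifts along $Z \hookrightarrow X$ exactly when it lifts along $\bar Z \hookrightarrow X$; hence the existence of a divisor of $\d$ lifting along $Z \hookrightarrow X$ is equivalent to the existence of one lifting along $\bar Z \hookrightarrow X$. Finally, applying \autoref{linsys:pullbacks_linsys:claim:criterion} once more, now to $\bar Z \hookrightarrow X$, this last condition is equivalent to $\d$ lifting along $\bar Z \hookrightarrow X$. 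Reading the resulting chain of equivalences from end to end yields the desired biconditional.

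The only delicate point, and the place I would be most careful, is the standing hypothesis in these auxiliary results that the source of the morphism be normal, whereas here $Z$ (and a fortiori $\bar Z$) is only assumed integral. As in the proof of \autoref{linsys:pullbacks_div:claim:immersion_closure}, this is harmless because every lifting condition in sight depends only on the image in $X$ of the generic point, and $Z$ and $\bar Z$ share a generic point and hence induce the same point $\eta \in X$. I do not anticipate any genuine obstacle: the substance of the statement lives entirely in the two cited claims, and combining them is routine.
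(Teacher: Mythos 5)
Your argument is exactly the paper's: the paper proves this claim by combining \autoref{linsys:pullbacks_linsys:claim:criterion} (lifting of $\d$ is equivalent to lifting of some $D \in \d$) with \autoref{linsys:pullbacks_div:claim:immersion_closure} (divisor lifting is unchanged under replacing $Z$ by $\bar Z$), precisely the chain of equivalences you describe. Your added remark that everything depends only on the common image of the generic point of $Z$ and $\bar Z$ is correct and harmless.
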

\begin{proof}
    Follows immediately from \autoref{linsys:pullbacks_linsys:claim:criterion} and \autoref{linsys:pullbacks_div:claim:immersion_closure}
\end{proof}

\begin{claim}\label{linsys:pullbacks_ls:claim:lin_comb_of_secs}
    Let $f: X' \to X$ be a morphism of non-empty normal noetherian integral schemes, $\mathcal L$ be an invertible sheaf, $s_1, \dots, s_n \in \Gamma(X, \mathcal L)$ be sections and $s \defeq~\lambda_1 s_1 +~\hdots + \lambda_n s_n$, where $\lambda_i \in k$. If $\div_\mathcal L s$ lifts along $f$, then $\div_\mathcal L s_i$ lifts along $f$ for some $i$.
\end{claim}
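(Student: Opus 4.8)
The plan is to translate the hypothesis on the divisor $\div_\mathcal L s$ into a statement about the vanishing of the pulled-back section $f^* s$, where the linearity of the pullback makes the conclusion immediate. The key observation is that, for an effective divisor of the form $\div_\mathcal L t$ with $t \ne 0$, lifting along $f$ is exactly the negation of the vanishing of $f^* t$.

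First I would record the following equivalence: for any non-zero section $t \in \Gamma(X, \mathcal L)$, the divisor $\div_\mathcal L t$ lifts along $f$ if and only if $f^* t \ne 0$. Indeed, by item 4) of \autoref{linsys:pullbacks_div:thm:equiv} the effective divisor $\div_\mathcal L t$ lifts along $f$ precisely when $f(X') \not\subset \Supp \div_\mathcal L t$, while by \autoref{prelim:div_lin_sys:claim:sec_vanishing_criterion} the containment $f(X') \subset \div_\mathcal L t$ is equivalent to $f^* t = 0$. Since $\Supp \div_\mathcal L t$ is just the underlying closed set of the effective divisor $\div_\mathcal L t$, these two containment conditions agree, and chaining the equivalences yields the claim.

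Next I would apply this to the hypothesis. Since $\div_\mathcal L s$ lifts along $f$, the first step gives $f^* s \ne 0$. Because the pullback of sections is $k$-linear and $s = \lambda_1 s_1 + \cdots + \lambda_n s_n$, we have $f^* s = \lambda_1 f^* s_1 + \cdots + \lambda_n f^* s_n$. If every $f^* s_i$ vanished, then $f^* s$ would vanish as well, contradicting $f^* s \ne 0$; hence $f^* s_i \ne 0$ for some $i$. This $s_i$ is in particular itself non-zero, so $\div_\mathcal L s_i$ is well-defined, and by the equivalence of the first step $\div_\mathcal L s_i$ lifts along $f$, as desired.

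The argument is essentially a one-line reduction, so there is no genuine obstacle; the only points requiring care are the bookkeeping in the first step — matching the support condition of \autoref{linsys:pullbacks_div:thm:equiv} with the set-theoretic containment appearing in \autoref{prelim:div_lin_sys:claim:sec_vanishing_criterion} — and the observation that a section with non-zero pullback is automatically non-zero, so that the divisor $\div_\mathcal L s_i$ produced at the end is actually defined.
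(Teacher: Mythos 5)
Your proof is correct. The core idea is the same as the paper's — both arguments reduce the claim to the linearity of a degeneracy condition, so that a non-degenerate linear combination must have a non-degenerate term — but the route is different. The paper localizes at the image $\eta$ of the generic point of $X'$, trivializes $\mathcal L$ there, and observes that lifting becomes the condition $s \notin \mathfrak m_\eta$, which must fail for some $s_i$ because $\mathfrak m_\eta$ is a $k$-subspace. You stay global: chaining item 4) of \autoref{linsys:pullbacks_div:thm:equiv} with \autoref{prelim:div_lin_sys:claim:sec_vanishing_criterion} to show that for a non-zero $t$ the divisor $\div_{\mathcal L} t$ lifts along $f$ exactly when $f^*t \ne 0$, and then using $k$-linearity of $f^*$ on global sections. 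The two degeneracy conditions coincide ($f^*t = 0$ is equivalent to $t$ vanishing in $\mathcal L \otimes k(\eta)$, i.e.\ to $t_\eta \in \mathfrak m_\eta \mathcal L_\eta$), so neither route is stronger; yours avoids having to justify the trivialization of $\mathcal L$ after localizing, at the cost of the bookkeeping you flag — matching the scheme-theoretic containment $f(X') \subset \div_{\mathcal L} t$ in \autoref{prelim:div_lin_sys:claim:sec_vanishing_criterion} with the set-theoretic containment $f(X') \subset \Supp \div_{\mathcal L} t$ in \autoref{linsys:pullbacks_div:thm:equiv}, which is legitimate since the scheme-theoretic image of $X'$ is reduced. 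You also correctly note that the index $i$ you produce satisfies $f^*s_i \ne 0$, hence $s_i \ne 0$, so $\div_{\mathcal L} s_i$ is actually defined — a point the paper's statement leaves implicit.
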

\begin{proof}
    Let $\eta$ be the image of the generic point of $X'$. Without loss of generalty $X = \Spec \O_{X, \eta}$ --- then we can assume that $\mathcal L = \O_X$ and $s_1, \dots, s_n, s \in \O_{X, \eta}$. Now, if $\div_\mathcal L s = \div s$ lifts along $f$, then $s \not \in \mathfrak m_\eta$, so there must be $i$ such that $s_i \not \in \mathfrak m_\eta$ and $\div s_i$ lifts along $f$.
\end{proof}

\subsection{Equivariant Cartier Divisors and Support functions}\label{linsys:ssec:equivar_div}

\begin{claim}
    Let $X_\Sigma$ be a toric variety. Fix an equivariant Cartier divisor $D \in~\CaDiv_{T^n} X_\Sigma$ and $\lambda: T^1 \to T^n$ such that $\lambda \in |\Sigma|$, i.e. it admits a unique extensions $\bar \lambda: \A^1 \to X_\Sigma$. Then $D$ lifts along $\bar \lambda$. 
\end{claim}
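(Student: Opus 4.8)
The plan is to apply \autoref{linsys:pullbacks_div:thm:equiv}, whose equivalence of conditions (1) and (4) reduces the task to verifying that $\bar\lambda(\A^1) \not\subset \Supp D$.

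The key observation is that equivariance confines $\Supp D$ to the boundary of $X_\Sigma$. Since $D$ is $T^n$-equivariant, the closed set $\Supp D$ is $T^n$-invariant, and it is a proper subset of $X_\Sigma$ (the support of a nonzero divisor is nowhere dense, being pure of codimension one; the case $D = 0$ is trivial, as then $\O(D) = \O_X$ lifts along everything). The big torus $T^n \subset X_\Sigma$ is a single orbit, so if $\Supp D$ met $T^n$ at all, invariance would force $T^n \subset \Supp D$, and then closedness would give $X_\Sigma = \overline{T^n} \subset \Supp D$, contradicting properness. Hence $\Supp D \cap T^n = \varnothing$; equivalently, $D|_{T^n} = 0$.

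It remains to locate the image of the generic point. The cocharacter $\lambda : T^1 \to T^n$ extends to $\bar\lambda : \A^1 \to X_\Sigma$, and because $T^1$ is open and dense in $\A^1$ the generic point $\eta'$ of $\A^1$ is that of $T^1$; its image $\eta = \bar\lambda(\eta') = \lambda(\eta')$ therefore lies in $T^n$. Combining this with $\Supp D \cap T^n = \varnothing$ yields $\eta \notin \Supp D$, so in particular $\bar\lambda(\A^1) \not\subset \Supp D$, and \autoref{linsys:pullbacks_div:thm:equiv} delivers the claim.

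I do not anticipate any serious obstacle: the entire content is the standard fact that equivariance pushes the support of a divisor into the toric boundary, together with the routine identification of the generic point of $\A^1$ with that of the open subtorus. The only point demanding care is to keep the argument purely divisor-theoretic—it must nowhere invoke the support function $\psi_D$, since this very claim is what licenses the definition of $\psi_D$ as $\deg \bar\lambda^* D$. The proof above respects this, relying only on the characterizations in \autoref{linsys:pullbacks_div:thm:equiv} and the geometry of the torus orbit.
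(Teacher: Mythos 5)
Your proof is correct and follows the same route as the paper: both establish $\Supp D \cap T^n = \varnothing$ (the paper simply asserts this as clear, while you justify it via orbit invariance) and then conclude $\bar\lambda(\A^1) \not\subset \Supp D$ from $\bar\lambda(T^1) \subset T^n$, invoking the equivalence $1) \iff 4)$ of \autoref{linsys:pullbacks_div:thm:equiv}. Your extra care in avoiding circular use of $\psi_D$ is well placed but does not change the argument.
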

\begin{proof}
    Clearly $\Supp D \cap T^n = \varnothing$. But $\bar \lambda (T^1) \subset T^n$, so $\bar \lambda(\A^1) \not \subset \Supp D$, hence by \autoref{linsys:pullbacks_div:thm:equiv} we are done.
\end{proof}

\begin{definition}
    Let $X_\Sigma$ be a toric variety and $D \in \CaDiv_{T^n} X_\Sigma$ be an equivariant Cartier divisor. Then we define the support function of $D$ as $\psi_D: |\Sigma| \to \Z$, $\lambda \mapsto~\deg \bar \lambda^* D$, where $\bar \lambda: \A^1 \to X_\Sigma$ is the unique extension of $\lambda: T^1 \to T^n$ (cf. claim above).
\end{definition}

\begin{example}
    For $\chi \in M$ we have $\psi_{\div \chi} = \langle \chi, - \rangle|_{|\Sigma|}$.
\end{example}

\begin{definition}
    Let $\Sigma$ be a fan in $N$. We say that a function $\psi: |\Sigma| \to \Z$ is \textbf{linear on cones} if for any $\sigma \in \Sigma$ and any two points $x, y \in \sigma$ we have $\psi(x + y) = \psi(x) + \psi(y)$.
\end{definition}

\begin{remark}
    Any linear combination with integer coefficients of linear on cones functions $|\Sigma| \to \Z$ is again a linear on cones function, so linear on cones functions $|\Sigma| \to \Z$ is an abelian group.
\end{remark}

\begin{theorem}\label{linsys:equivar_div:thm:homomorphism}
    Let $X_\Sigma$ be a toric variety. Then $D \mapsto \psi_D$ is an isomorphism from equiavariant Cartier divisors $\CaDiv_{T^n} X_\Sigma$ onto the abelian group of linear on cones functions $|\Sigma| \to \Z$.
\end{theorem}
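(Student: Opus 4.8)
The plan is to first dispatch the homomorphism property and then reduce everything to a single local normal form on the affine charts $U_\sigma = \Spec k[\sigma^\vee \cap M]$. The homomorphism property is immediate from what precedes: each equivariant divisor lifts along every extended one-parameter subgroup $\bar\lambda$ (the Claim just above), and $D \mapsto \bar\lambda^* D$ followed by $\deg$ is a group homomorphism into $\Z$ by the additivity of pullback. Hence $\psi_{D+E}(\lambda) = \deg \bar\lambda^*(D+E) = \deg \bar\lambda^* D + \deg \bar\lambda^* E = \psi_D(\lambda) + \psi_E(\lambda)$ for every $\lambda \in |\Sigma|$, so $D \mapsto \psi_D$ is additive.

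The technical crux is a local statement that I would isolate as a lemma: on each chart an equivariant Cartier divisor satisfies $D|_{U_\sigma} = \div(\chi^{m_\sigma})$ for a character $m_\sigma \in M$, uniquely determined modulo $\sigma^\perp \cap M$. To prove it I would use that $\Pic U_\sigma = 0$, so $D|_{U_\sigma}$ is principal, say $D|_{U_\sigma} = \div g$ with $g \in R(X_\Sigma)^\times$; then invariance forces $g$ to be a character up to a scalar. Concretely, for every $t \in T^n(k)$ the identity $t^*\div g = \div g$ gives $t^* g / g \in \Gamma(U_\sigma, \O^\times)$, and comparing weights under the torus action (torus rigidity: a rational eigenfunction on $T^n$ of a fixed weight is a scalar multiple of the corresponding character) pins down $g = c\,\chi^{m_\sigma}$. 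This argument is purely algebraic and so insensitive to $\char k$. The ambiguity is exactly the unit group $\{\chi^{m} : m \in \sigma^\perp \cap M\}$ of $U_\sigma$, which is why $m_\sigma$ is well-defined only modulo $\sigma^\perp$.

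Granting the lemma, the remaining points become formal. To see that $\psi_D$ is linear on cones, note that for $\lambda \in \sigma$ the extended curve $\bar\lambda$ factors through the open set $U_\sigma$, so $\psi_D(\lambda) = \deg \bar\lambda^*(D|_{U_\sigma}) = \psi_{\div \chi^{m_\sigma}}(\lambda) = \langle m_\sigma, \lambda \rangle$ by the Example computing $\psi_{\div \chi} = \langle \chi, -\rangle|_{|\Sigma|}$; thus $\psi_D|_\sigma = \langle m_\sigma, -\rangle|_\sigma$ is linear and the map lands in the target group. Injectivity follows at once: if $\psi_D = 0$ then each $m_\sigma \in \sigma^\perp \cap M$, so $\chi^{m_\sigma}$ is a unit on $U_\sigma$ and $D|_{U_\sigma} = 0$; since the $U_\sigma$ cover $X_\Sigma$, we get $D = 0$.

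For surjectivity, given a linear-on-cones $\psi \colon |\Sigma| \to \Z$ I would choose for each cone a character $m_\sigma \in M$ with $\psi|_\sigma = \langle m_\sigma, -\rangle|_\sigma$: indeed $\psi|_\sigma$ is additive on the monoid $\sigma \cap N$, hence extends to a homomorphism on its group completion $\operatorname{span}(\sigma) \cap N$, a saturated sublattice and therefore a direct summand of $N$, so the homomorphism extends to an element of $M = \Hom(N,\Z)$. Setting $D_\sigma := \div(\chi^{m_\sigma})$ on $U_\sigma$, these glue: on $U_\sigma \cap U_\tau = U_{\sigma \cap \tau}$ the two characters agree on $\sigma \cap \tau$, so $m_\sigma - m_\tau \in (\sigma \cap \tau)^\perp \cap M$, whence $\chi^{m_\sigma - m_\tau}$ is a unit on $U_{\sigma \cap \tau}$ and $D_\sigma, D_\tau$ restrict to the same divisor there. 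The glued divisor is locally principal, hence Cartier, and locally $\div$ of a character, hence $T^n$-invariant, and $\psi_D = \psi$ by the linearity computation above. The main obstacle is the local lemma, and more precisely the verification in arbitrary characteristic that invariance of a principal divisor forces its generator to be a character; the gluing and the linearity bookkeeping are routine.
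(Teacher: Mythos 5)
Your proposal is correct and, in substance, coincides with the paper's proof: the paper establishes this theorem only by citing \cite[Th.~4.2.12]{CoxLittleSchenk}, and your argument --- the local normal form $D|_{U_\sigma} = \div(\chi^{m_\sigma})$ with $m_\sigma$ unique modulo $\sigma^\perp \cap M$, followed by linearity on cones, injectivity from the unit description of $k[\sigma^\vee \cap M]$, and surjectivity by extending $\psi|_\sigma$ off the saturated sublattice $\operatorname{span}(\sigma)\cap N$ and gluing --- is exactly the standard proof of that cited result. The facts you leave as known ($\Pic(U_\sigma)=0$, units of $U_\sigma$ being scalar multiples of characters in $\sigma^\perp\cap M$, and rational semi-invariants of $T^n$ being scalar multiples of characters) are all characteristic-free, so nothing essential is missing.
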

\begin{proof}
    \cite[Th. 4.2.12]{CoxLittleSchenk} (note that in our definition the support function has the opposite sign).
\end{proof}

\begin{remark}
     Let $X_\Sigma$ be a toric variety and $D \in \CaDiv_{T^n} X_\Sigma$ be an equivariant Cartier divisor on $X_\Sigma$. Denote by $\Sigma(1)$ the set of primitive vectors of rays of $\Sigma$. Recall that we have the canonical correspondence $v \mapsto D_v$ between $\Sigma(1)$ and prime equivariant divisors. Then $D = \sum_{v \in \Sigma(1)} \psi_D(v) \cdot D_v$.
\end{remark}
\begin{proof}
    Obvious from local computation.
\end{proof}

\begin{claim}\label{linsys:equivar_div:claim:eff_criterion}
    Let $X_\Sigma$ be a toric variety and $D \in \CaDiv_{T^n} X_\Sigma$ be an equivariant Cartier divisor on $X_\Sigma$. Then $D$ is effective if and only if $\psi_D(|\Sigma|) \subset \Z_{\ge 0}$.
\end{claim}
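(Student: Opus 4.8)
The plan is to reduce effectivity to the sign of $\psi_D$ on the primitive ray generators, and then to propagate positivity across each cone using linearity. I would begin from the decomposition $D = \sum_{v \in \Sigma(1)} \psi_D(v) \cdot D_v$ recorded in the remark above. Because $D$ is equivariant, its support is $T^n$-invariant and hence misses the torus, so every prime component of $D$ is an invariant divisor, i.e. one of the $D_v$; thus this is the genuine prime decomposition of $D$. By definition $D$ is effective exactly when all of its coefficients are non-negative, so $D$ is effective if and only if $\psi_D(v) \ge 0$ for every $v \in \Sigma(1)$.

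The backward implication is then immediate: since $\Sigma(1) \subset |\Sigma|$, the hypothesis $\psi_D(|\Sigma|) \subset \Z_{\ge 0}$ forces $\psi_D(v) \ge 0$ for all rays $v$, hence $D$ is effective.

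For the forward implication I would argue cone by cone. Fix $\lambda \in |\Sigma|$ together with a cone $\sigma \in \Sigma$ containing it. By \autoref{linsys:equivar_div:thm:homomorphism} the function $\psi_D$ is linear on cones, so its restriction to $\sigma$ is cut out by a linear functional: there is $m_\sigma \in M$ with $\psi_D|_{\sigma \cap N} = \langle m_\sigma, - \rangle|_{\sigma \cap N}$. Since $\sigma$ is the non-negative real hull of its primitive ray generators, I may write $\lambda = \sum_i a_i v_i$ with $v_i \in \Sigma(1)$ and $a_i \ge 0$. Using $\R$-linearity of the pairing together with $\langle m_\sigma, v_i\rangle = \psi_D(v_i)$, I compute $\psi_D(\lambda) = \langle m_\sigma, \lambda\rangle = \sum_i a_i \langle m_\sigma, v_i \rangle = \sum_i a_i \psi_D(v_i)$. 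Each $\psi_D(v_i) \ge 0$ by effectivity and each $a_i \ge 0$, so $\psi_D(\lambda) \ge 0$, and as $\lambda$ was arbitrary this gives $\psi_D(|\Sigma|) \subset \Z_{\ge 0}$.

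The only delicate point is the passage from the purely additive structure of $\psi_D$ on the integral points of a cone to its description as the restriction of a genuine linear functional $\langle m_\sigma, - \rangle$ on the real span of $\sigma$: one must know that the monoid homomorphism $\sigma \cap N \to \Z$ extends to a homomorphism on the generated subgroup of $N$, and thence to an $\R$-linear functional compatible with the non-negative combination expressing $\lambda$. This is precisely the content packaged into \autoref{linsys:equivar_div:thm:homomorphism}, so once that is invoked the remaining sign computation is routine.
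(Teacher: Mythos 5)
Your proof is correct, and the ``if'' direction is exactly the paper's: both of you read off effectivity from the decomposition $D = \sum_{v \in \Sigma(1)} \psi_D(v)\cdot D_v$ of the preceding remark. Where you diverge is the ``only if'' direction. The paper disposes of it in one line by pure functoriality: $\psi_D(\lambda) = \deg \bar\lambda^* D$, and the pullback of an effective divisor is effective, so every value of $\psi_D$ is automatically non-negative --- no mention of rays, cones, or linear functionals is needed. You instead first deduce $\psi_D(v)\ge 0$ on the primitive generators and then propagate this to an arbitrary $\lambda \in \sigma$ by writing $\lambda$ as a non-negative combination of the $v_i$ and invoking the linear functional $m_\sigma$ cutting out $\psi_D|_\sigma$. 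That argument is also valid (and the ``delicate point'' you flag is genuinely harmless here: you only need the $\Q$-linear extension of the additive function $\psi_D|_{\sigma}$ to the rational span of $\sigma$, which exists for any additive function on a saturated cone, so you need not even produce $m_\sigma$ as an honest element of $M$), but it is doing more work than necessary. The trade-off: your route stays entirely inside the combinatorics of support functions and makes the convexity mechanism explicit, whereas the paper's route exploits the definition $\psi_D(\lambda) = \deg\bar\lambda^* D$ directly and is both shorter and insensitive to the cone structure.
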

\begin{proof}
    Clearly if $D$ is effective, then $\psi_D(|\Sigma|) \subset \Z_{\ge 0}$ as the pullback of an effective divisor is effective. The converse: using the notation from the above remark we have that $D = \sum_{v \in \Sigma(1)}\psi_D(v) \cdot D_v$. Since $\psi_D(v) \ge 0$ for all $v \in \Sigma(1)$ we get that $D$ is effective.
\end{proof}

\begin{claim}\label{linsys:equivar_div:claim:global_secs}
    Let $X_\Sigma$ be a toric variety and $D \in \CaDiv_{T^n} X_\Sigma$ be an equivariant Cartier divisor on $X_\Sigma$. Then we have that\footnote{recall that for a set $S$ we denote by $k \cdot S$ the vector space spanned by $S$.} 
    \[
    \Gamma(X_\Sigma, \O(D)) = k \cdot \{ \chi \in M\ |\ \langle\chi, - \rangle|_{|\Sigma|} + \psi_D \ge 0 \}.
    \]
\end{claim}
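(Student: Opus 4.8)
The plan is to realize $\Gamma(X_\Sigma, \O(D))$ as a graded subspace of the Laurent polynomials and then decide exactly which characters occur. First I would restrict to the dense torus. Since $D$ is equivariant we have $\Supp D \cap T^n = \varnothing$, so $\O(D)|_{T^n} = \O_{T^n}$ as subsheaves of $\underline{R(X_\Sigma)}$; as $X_\Sigma$ is integral the restriction $\Gamma(X_\Sigma, \O(D)) \hookrightarrow \Gamma(T^n, \O_{T^n}) = k \cdot M$ is injective. Unwinding the definition of $\O(D)$ on the open set $U = X_\Sigma$, a rational function $f$ lies in $\Gamma(X_\Sigma, \O(D))$ iff $D + \div f \ge 0$; the constraint over $T^n$ merely forces $f$ to be regular there, i.e. $f \in k\cdot M$. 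Thus $\Gamma(X_\Sigma, \O(D))$ is identified with $\{\, f \in k\cdot M : D + \div f \ge 0 \,\}$.

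The crux is to show this subspace is $M$-graded, i.e. spanned by the monomials it contains. The $T^n$-action on $X_\Sigma$ together with the equivariance $t^*D = D$ produces a linear $T^n$-action on $\Gamma(X_\Sigma, \O(D))$ for which the inclusion into $k\cdot M$ is $T^n$-equivariant; equivalently $\Gamma(X_\Sigma, \O(D))$ is a subrepresentation of $k\cdot M = \bigoplus_{\chi \in M} k\chi$. Since the right-hand side is precisely the decomposition of $k\cdot M$ into its one-dimensional $T^n$-isotypic components (the torus being diagonalizable, which is valid in any characteristic), every subrepresentation is the span of the characters it contains. Hence $f = \sum_\chi c_\chi \chi$ lies in $\Gamma(X_\Sigma, \O(D))$ iff each monomial $\chi$ with $c_\chi \ne 0$ does. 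I expect this gradedness step — exhibiting the action on sections and invoking the isotypic decomposition — to be the main technical point, although it is entirely standard.

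Finally I would treat a single character. For $\chi \in M$ one has $\chi \in \Gamma(X_\Sigma, \O(D))$ iff $D + \div\chi \ge 0$. Both summands are equivariant Cartier divisors, so by \autoref{linsys:equivar_div:claim:eff_criterion} this is equivalent to $\psi_{D + \div\chi}(|\Sigma|) \subset \Z_{\ge 0}$. By the additivity of $D \mapsto \psi_D$ from \autoref{linsys:equivar_div:thm:homomorphism} and the identity $\psi_{\div\chi} = \langle\chi, - \rangle|_{|\Sigma|}$ computed above, we get $\psi_{D + \div\chi} = \psi_D + \langle\chi, - \rangle|_{|\Sigma|}$, so the condition reads $\psi_D + \langle\chi, - \rangle|_{|\Sigma|} \ge 0$ on $|\Sigma|$ — exactly the defining condition of the set on the right. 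Combining with the grading step yields $\Gamma(X_\Sigma, \O(D)) = k \cdot \{ \chi \in M : \langle\chi, - \rangle|_{|\Sigma|} + \psi_D \ge 0 \}$, as claimed.

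Alternatively, one can bypass the representation-theoretic grading by a direct valuation computation: for $f = \sum_\chi c_\chi \chi \in k\cdot M$, the order of $f$ along the ray divisor $D_v$ equals $\min\{\, \langle\chi, v\rangle : c_\chi \ne 0 \,\}$, so $D + \div f \ge 0$ amounts to $\psi_D(v) + \langle\chi, v\rangle \ge 0$ for every $v \in \Sigma(1)$ and every $\chi$ with $c_\chi \ne 0$; since both $\psi_D$ and $\langle\chi, - \rangle$ are linear on cones, nonnegativity on the rays upgrades to all of $|\Sigma|$, recovering the same description and the grading simultaneously.
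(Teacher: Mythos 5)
Your proof is correct and takes essentially the same route as the paper's: the gradedness step (that $\Gamma(X_\Sigma, \O(D))$ is spanned by the characters it contains) is exactly what the paper extracts from Artin's lemma on linear independence of characters applied to the $T^n$-invariant subspace of $k \cdot M$, and your isotypic-decomposition phrasing is the same fact. The single-character step --- reducing $D + \div \chi \ge 0$ to $\psi_D + \langle \chi, - \rangle|_{|\Sigma|} \ge 0$ via \autoref{linsys:equivar_div:claim:eff_criterion} and the additivity of $D \mapsto \psi_D$ from \autoref{linsys:equivar_div:thm:homomorphism} --- is identical to the paper's.
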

\begin{proof}
    As $\Gamma(X_\Sigma, \O(D))$ is a $T^n$-invariant subspace of $R(X_\Sigma)$, we get that 
    \[
    \Gamma(X_\Sigma, \O(D)) = k \cdot \left(M \cap \Gamma(X_\Sigma, \O(D)\right)
    \]
    by Artin's lemma on linear independence of characters. By definition, $\chi \in \Gamma(X_\Sigma, \O(D))$ if and only if $\div \chi + D \ge 0$. By \autoref{linsys:equivar_div:claim:eff_criterion} it is equivalent to $\psi_{D + \div \chi} \ge 0$. By \autoref{linsys:equivar_div:thm:homomorphism} $\psi_{D + \div \chi} = \psi_D + \langle \chi, - \rangle|_{\Sigma}$, so we are done.
\end{proof}

\begin{claim}\label{linsys:equivar_div:claim:orbit_lifting}
    Let $X_\Sigma$ be a toric variety and $D \in \CaDiv_{T^n} X_\Sigma$ be an equivariant Cartier divisor on $X_\Sigma$. Fix $\sigma \in \Sigma$ and denote by $\O_\sigma \subset X_\Sigma$ the corresponding orbit. Then the following are equivalent:
    \begin{enumerate}[1)]
        \item $D$ lifts along $\O_\sigma \xhookrightarrow{} X_\Sigma$;
        \item $D$ lifts along $\overline{\O_\sigma} \xhookrightarrow{} X_\Sigma$;
        \item $\psi_D(\sigma) = 0$.
    \end{enumerate}
\end{claim}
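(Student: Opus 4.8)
The plan is to obtain $1)\iff 2)$ for free from the functoriality already in place, and then to deduce $1)\iff 3)$ by feeding the geometric lifting criterion of \autoref{linsys:pullbacks_div:thm:equiv} into the dictionary between equivariant Cartier divisors and their support functions.

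The equivalence $1)\iff 2)$ is immediate from \autoref{linsys:pullbacks_div:claim:immersion_closure}: the orbit $\O_\sigma$ is integral (it is a torus) and $\O_\sigma\hookrightarrow X_\Sigma$ is an immersion whose closure is exactly $\overline{\O_\sigma}\hookrightarrow X_\Sigma$, so lifting along one is equivalent to lifting along the other.

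For $1)\iff 3)$ I would start from item 4) of \autoref{linsys:pullbacks_div:thm:equiv}, which says that $D$ lifts along $\O_\sigma\hookrightarrow X_\Sigma$ precisely when $\O_\sigma\not\subset\Supp D$. Writing $D=\sum_{v\in\Sigma(1)}\psi_D(v)\,D_v$ as in the remark following \autoref{linsys:equivar_div:thm:homomorphism}, we have $\Supp D=\bigcup_{\psi_D(v)\neq 0}D_v$. Since $\overline{\O_\sigma}$ is irreducible and each $D_v$ is closed, the containment $\O_\sigma\subset\Supp D$ holds if and only if $\overline{\O_\sigma}\subset D_v$ for some $v$ with $\psi_D(v)\neq 0$; by the orbit--cone correspondence $\overline{\O_\sigma}\subset D_v$ happens exactly when the ray through $v$ is a face of $\sigma$, i.e. when $v$ is a ray generator of $\sigma$. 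Hence $\O_\sigma\not\subset\Supp D$ is equivalent to $\psi_D$ vanishing on every ray of $\sigma$.

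It then remains to check that $\psi_D$ vanishes on all rays of $\sigma$ if and only if $\psi_D|_\sigma\equiv 0$, that is, $\psi_D(\sigma)=0$; this is the only step carrying genuine content. One direction is trivial. For the converse, $\psi_D$ is linear on $\sigma$ by \autoref{linsys:equivar_div:thm:homomorphism}, so $\psi_D|_\sigma$ is the restriction of a linear functional $\ell$ on $\operatorname{span}_\R(\sigma)$; since a cone of a fan is the conical hull of its rays, its ray generators span $\operatorname{span}_\R(\sigma)$, and $\ell$ vanishing on each ray generator forces $\ell\equiv 0$, whence $\psi_D(\sigma)=0$. This upgrade from vanishing on rays to vanishing on the whole cone — i.e. promoting additivity on the integral points of $\sigma$ to honest linearity and using that the rays linearly span $\sigma$ — is where the only real obstacle lies; everything else is bookkeeping within the divisor/support-function dictionary.
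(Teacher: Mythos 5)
Your proof is correct, and the equivalence $1)\iff 2)$ is handled exactly as in the paper (via \autoref{linsys:pullbacks_div:claim:immersion_closure}). For $1)\iff 3)$, however, you take a genuinely different route. The paper restricts to the affine chart $X_\sigma$, observes that there $\psi_D(\sigma)=0$ is equivalent to $D=0$, and uses that $\O_\sigma$ is the unique closed orbit of $X_\sigma$ and hence meets every non-empty closed equivariant subset --- in particular $\Supp D$ --- so that $D$ lifts along $\O_\sigma\hookrightarrow X_\sigma$ iff $\Supp D\cap X_\sigma=\varnothing$. You instead keep the global decomposition $D=\sum_{v\in\Sigma(1)}\psi_D(v)\,D_v$, use irreducibility of $\O_\sigma$ to reduce the containment $\O_\sigma\subset\Supp D$ to a single prime component, identify via the orbit--cone correspondence which $D_v$ contain $\O_\sigma$ (exactly the rays of $\sigma$), and then upgrade vanishing of $\psi_D$ on the rays of $\sigma$ to vanishing on all of $\sigma$ using that a strongly convex cone is spanned by its rays and that $\psi_D$ is linear on $\sigma$. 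Both arguments are sound; the paper's is shorter and sidesteps the combinatorics of rays entirely by exploiting the minimal orbit, while yours is more explicit --- it actually tells you which components of $\Supp D$ contain a given orbit --- at the cost of the extra (easy but genuine) linear-algebra step, which you correctly flag as the one place where additivity on integral points must be promoted to honest linearity; this is implicit in \autoref{linsys:equivar_div:thm:homomorphism} via the Cartier data $\{m_\sigma\}$ and is unproblematic.
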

\begin{proof}
    \begin{description}
        \item[$1) \iff 2)$] immediate from  \autoref{linsys:pullbacks_div:claim:immersion_closure}.
        \item[$1) \iff 3)$] $\O_\sigma \subset X_\sigma$, so without loss of generality $X_\Sigma = X_\sigma$. In particular, $\psi_D(\sigma) =~0$ is equivalent to $D =0$. Also now $\O_\sigma$ is the smallest orbit in $X_\sigma$ and it is contained in all non-empty closed equivariant subsets of $X_\Sigma$ as it is contained in the closure of any orbit. $\Supp D$ is an equivariant closed subset of $X_\Sigma$, so $D$ lifts along $\O_\sigma \xhookrightarrow{} X_\sigma$ if and only if $\Supp D = \varnothing$, i.e. $D = 0$.
    \end{description}
\end{proof}

\begin{claim}\label{linsys:equivar_div:claim:pullback_along_toric_morphism}
    Let $\phi: \Sigma \to \Sigma'$ be a morphism of fans\footnote{i.e. a homomorphism of lattices $\phi: N \to N'$ such that for any $\sigma \in \Sigma$ there is $\sigma' \in \Sigma'$ so that $\phi(\sigma) \subset \sigma'$.} and $\Phi: X_\Sigma \to X_{\Sigma'}$ be the corresponding toric morphism of toric varieties. Then any equivariant Cartier divisor $D \in \CaDiv_{T^n} X_{\Sigma'}$ lifts along $\Phi$. Moreover, $\psi_{\Phi^* D} = \psi_D \circ \phi$.
\end{claim}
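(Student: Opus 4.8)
The plan is to break the statement into three parts: (i) that $D$ lifts along $\Phi$, so that $\Phi^*D$ is even defined; (ii) that $\Phi^*D$ is again $T^n$-equivariant, so that the support function $\psi_{\Phi^*D}$ makes sense; and (iii) the pointwise identity $\psi_{\Phi^*D}(\mu)=\psi_D(\phi(\mu))$ for every $\mu\in|\Sigma|$. Parts (i) and (ii) are the sanity checks hidden in the phrasing, and (iii) is the actual content.

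For (i) I would use that the toric morphism $\Phi$ restricts to a group homomorphism of the dense tori, so the image of the generic point $\eta$ of $X_\Sigma$ lands in the dense torus of $X_{\Sigma'}$. Since $D$ is equivariant, its support is contained in the boundary and is disjoint from that dense torus, so $\Phi(\eta)\notin\Supp D$ and hence $\Phi(X_\Sigma)\not\subset\Supp D$. By the equivalence of conditions $1)$ and $4)$ in \autoref{linsys:pullbacks_div:thm:equiv}, $D$ lifts along $\Phi$. For (ii) I would invoke equivariance of the toric morphism: writing $\tau_t$ for translation by $t\in T^n$ and $\tau_{\Phi(t)}$ for translation by its image, one has $\Phi\circ\tau_t=\tau_{\Phi(t)}\circ\Phi$. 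Since translations are isomorphisms (so every divisor lifts along them), \autoref{linsys:pullbacks_div:claim:functorial_non_eff} applies to both factorizations of this common morphism and gives $\tau_t^*(\Phi^*D)=(\Phi\circ\tau_t)^*D=(\tau_{\Phi(t)}\circ\Phi)^*D=\Phi^*(\tau_{\Phi(t)}^*D)=\Phi^*D$, the last equality being equivariance of $D$. Hence $\Phi^*D\in\CaDiv_{T^n}X_\Sigma$.

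The heart is (iii), where the key geometric input is that extensions of one-parameter subgroups commute with $\Phi$. Fix $\mu\in|\Sigma|$ with unique extension $\bar\mu:\A^1\to X_\Sigma$. On tori, $\Phi\circ\mu$ is the one-parameter subgroup $\phi(\mu)\in N'$, and $\phi(\mu)\in|\Sigma'|$ precisely because $\phi$ is a morphism of fans, so $\phi(\mu)$ admits its own unique extension $\overline{\phi(\mu)}:\A^1\to X_{\Sigma'}$. Now $\Phi\circ\bar\mu$ is a morphism $\A^1\to X_{\Sigma'}$ restricting to $\phi(\mu)$ on $T^1$, so by separatedness of $X_{\Sigma'}$ (uniqueness of the extension) we get $\Phi\circ\bar\mu=\overline{\phi(\mu)}$. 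Since $\phi(\mu)\in|\Sigma'|$, the opening claim of \autoref{linsys:ssec:equivar_div} shows $D$ lifts along $\overline{\phi(\mu)}=\Phi\circ\bar\mu$, so \autoref{linsys:pullbacks_div:claim:functorial_non_eff} with $f=\Phi$, $g=\bar\mu$ yields $\bar\mu^*(\Phi^*D)=(\Phi\circ\bar\mu)^*D=\overline{\phi(\mu)}^*\,D$. Taking degrees gives $\psi_{\Phi^*D}(\mu)=\deg\bar\mu^*(\Phi^*D)=\deg\overline{\phi(\mu)}^*D=\psi_D(\phi(\mu))$, as desired.

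The only real obstacle is bookkeeping: one must check at each step that $D$ (or $\Phi^*D$) lifts along the relevant composite so that the functoriality of \autoref{linsys:pullbacks_div:claim:functorial_non_eff} is actually applicable, and one must argue the clean identification $\Phi\circ\bar\mu=\overline{\phi(\mu)}$ from uniqueness of extensions. Once these are in place the computation is formal, and as a byproduct the identity $\psi_{\Phi^*D}=\psi_D\circ\phi$ is automatically linear on cones, consistent with \autoref{linsys:equivar_div:thm:homomorphism}.
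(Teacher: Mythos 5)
Your proposal is correct and follows essentially the same route as the paper: lifting is established by observing that $\Supp D$ misses the dense torus while $\Phi$ maps torus to torus (condition 4) of \autoref{linsys:pullbacks_div:thm:equiv}), and the support-function identity comes from the identification $\Phi\circ\bar\lambda=\overline{\phi(\lambda)}$ together with the functoriality of pullbacks in \autoref{linsys:pullbacks_div:claim:functorial_non_eff}. You are somewhat more careful than the paper in justifying $\Phi\circ\bar\mu=\overline{\phi(\mu)}$ via uniqueness of extensions and in checking that $\Phi^*D$ is itself equivariant (which the paper leaves implicit but which is needed for $\psi_{\Phi^*D}$ to be defined), but these are refinements of the same argument rather than a different one.
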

\begin{proof}
    Denote by $\O \subset X_\Sigma$, $\O' \subset X_{\Sigma'}$ the open orbits. Then $\Supp D \cap \O' = \varnothing$ and $\Phi(\O) \subset \O'$, so $\Phi(X_\Sigma) \not \subset \Supp D$, hence by \autoref{linsys:pullbacks_div:thm:equiv} $D$ lifts along $\Phi$. Now, fix any $\lambda \in |\Sigma|$ and let $\bar \lambda: \A^1 \to X_\Sigma$ be the corresponding morphism. Clearly $\Phi \circ \bar \lambda: \A^1 \to X_{\Sigma'}$ is the same as the morphism $\overline{\phi(\lambda)}$. So, from \autoref{linsys:pullbacks_div:claim:functorial_non_eff} we get that:
    \[
    \psi_{\Phi^*D} (\lambda) = \deg \bar \lambda^* (\Phi^*D) = \deg (\Phi \circ \bar \lambda)^* D = \deg \overline{\phi(\lambda)}^* D = \psi_D(\phi(\lambda)).
    \]
\end{proof}

\begin{remark}
    Let $X_\Sigma$ be a toric a variety and $\tau \in \Sigma$ be a cone. Then $\overline{\O_\tau}$ is a toric variety as well and we denote its fan by $\Sigma/\tau$; we warn the reader that our notation is not standard and this fan is also denoted by $Star(\tau)$, cf. \cite[Par. 3.2]{CoxLittleSchenk}.
\end{remark}

\begin{definition}
    Let $\Sigma$ be a fan, $\tau \in \Sigma$ be a cone and $\psi: |\Sigma| \to \Z$ be a function that is linear on cones such that $\psi(\tau) = 0$. Then for any $\sigma \in \Sigma$ such that $\tau \subset \sigma$ we have that $\psi|_\sigma: \sigma \to \Z$ factors uniquely through $\pi_\sigma: \sigma \to \sigma/\tau$ --- clearly all these factorizations are preserved when we intersect cones containing $\tau$, so there must be a unique function $\psi/\tau: |\Sigma/\tau| \to \Z$ such that $\psi|_\sigma = \psi/\tau \circ \pi_\sigma$ for any $\sigma \in \Sigma$ containing~$\tau$.
\end{definition}

\begin{claim}\label{linsys:equivar_div:claim:character_orbit_restriction}
    Let $X_\Sigma$ be a toric variety and $\tau \in \Sigma$ be a cone and denote by $M_\tau$ the character lattice of $\overline \O_\tau$. Then the natural isomorphism $\tau^\bot \to M_\tau$ coincides with $\chi \mapsto \chi|_{\overline \O_\tau}$.
\end{claim}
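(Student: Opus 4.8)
The plan is to reduce the statement to a computation in the single affine chart $U_\tau = \Spec k[\tau^\vee \cap M]$ and there identify the restriction of a character with a monomial in the coordinate ring of the orbit. First I would recall the standard description of $\O_\tau$ (e.g. from \cite[Par. 3.2]{CoxLittleSchenk}): it is the unique closed orbit of $U_\tau$, it coincides with $\overline{\O_\tau} \cap U_\tau$, and it is cut out inside $U_\tau$ by the monomial ideal $I$ generated by those characters $m \in (\tau^\vee \cap M) \setminus \tau^\bot$, so that $k[\O_\tau] = k[\tau^\vee \cap M]/I \cong k[\tau^\bot \cap M]$. Under this isomorphism the character lattice $M_\tau$ of $\O_\tau$ is precisely $\tau^\bot \cap M = \tau^\bot$, and this identification is exactly the natural isomorphism in the statement. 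Since $\O_\tau$ is dense in $\overline{\O_\tau}$, a rational function on $\overline{\O_\tau}$ is determined by its restriction to $\O_\tau$, so it suffices to prove that $\chi|_{\O_\tau}$ equals the image of $\chi$ under $\tau^\bot \xrightarrow{\sim} M_\tau$ for every $\chi \in \tau^\bot$.

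Next I would observe that the restriction is well defined and can be computed naively. For $\chi \in \tau^\bot$ we have $\chi \in \tau^\bot \cap M \subset \tau^\vee \cap M$, so $\chi$ is a genuine regular (monomial) function on $U_\tau$; equivalently, $\psi_{\div \chi} = \langle \chi, - \rangle|_{|\Sigma|}$ vanishes on $\tau$, so $\div \chi$ lifts along $\overline{\O_\tau} \hookrightarrow X_\Sigma$ by \autoref{linsys:equivar_div:claim:orbit_lifting}, and the same holds for $-\chi$; hence $\chi|_{\overline{\O_\tau}}$ is a well-defined invertible rational function. Restriction of a regular function along the closed immersion $\O_\tau \hookrightarrow U_\tau$ is just passage to the quotient ring $k[\tau^\vee \cap M] \to k[\tau^\vee \cap M]/I$, and the image of the monomial $\chi$ is $\chi$ itself, now regarded in $k[\tau^\bot \cap M] = k[M_\tau]$, i.e. exactly the image of $\chi$ under the natural isomorphism. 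This is precisely the assertion $\chi|_{\O_\tau} = \bar\chi$, and by density it upgrades to $\chi|_{\overline{\O_\tau}} = \bar\chi$.

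The main thing to pin down --- and the only real obstacle --- is matching conventions: I must check that the restriction operation appearing in the statement (the pullback of the rational function, resp. of the linear-system datum, along the immersion developed in \autoref{linsys:ssec:pullbacks_div}) agrees with the naive ``image in the quotient coordinate ring'' used above. For a function that is regular on $U_\tau$ this is immediate, since pullback along a closed immersion of affines is reduction modulo the ideal; the lifting criterion above guarantees we are in this regular case. A secondary bookkeeping point is to keep the paper's convention that $\tau^\bot$ denotes the lattice $\{ \chi \in M : \langle \chi, \tau \rangle = 0 \}$ of integral points, so that the ``natural isomorphism'' is literally the tautological identification $\tau^\bot = M \cap \tau^\bot = M_\tau$ rather than an identification only up to scalars. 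An alternative, more intrinsic route avoiding the chart is to write $\O_\tau \cong T_{N(\tau)}$ with quotient map $q : T^n \to T_{N(\tau)}$, $t \mapsto t \cdot \gamma_\tau$ (where $\gamma_\tau$ is the distinguished point), use semi-invariance $\chi(t \cdot x) = \chi(t) \chi(x)$ together with $\chi(\gamma_\tau) = 1$ for $\chi \in \tau^\bot$ to get $\chi|_{\O_\tau}(t \cdot \gamma_\tau) = \chi(t)$, and recognize the right-hand side as $\bar\chi \circ q$; here the delicate point is instead the normalization $\chi(\gamma_\tau) = 1$, which again comes down to $\tau^\bot \subset \tau^\vee$.
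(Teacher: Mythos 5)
Your proof is correct, but it takes a genuinely different route from the paper. The paper argues by induction on $\dim \tau$: the base case $\dim \tau = 1$ is reduced (using density of $\O_\tau$ in $\overline{\O_\tau}$) to the explicit smooth chart $X_\tau \simeq T^{n-1} \times \A^1$, where the two restrictions are compared by hand, and the inductive step peels off one ray $\tau' \subset \tau$ at a time, passing through $\overline{\O_{\tau'}}$ and the quotient cone $\tau/\tau'$. You instead handle an arbitrary cone in one step by working in the affine chart $U_\tau = \Spec k[\tau^\vee \cap M]$ and using the standard semigroup-algebra description of the closed orbit: $\O_\tau$ is cut out by the monomial ideal spanned by the characters in $(\tau^\vee \cap M) \setminus \tau^\bot$, so restriction of the regular monomial $\chi \in \tau^\bot \subset \tau^\vee$ is literally reduction modulo that ideal, which is the tautological identification $\tau^\bot \cap M = M_\tau$. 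Both arguments hinge on the same two points --- density of $\O_\tau$ in $\overline{\O_\tau}$, and the fact that for $\chi \in \tau^\bot$ the function $\chi$ is regular and invertible along the orbit (equivalently $\div \chi$ lifts, which you correctly route through the support-function criterion). What the paper's induction buys is that it never needs the ideal of the orbit for a general (possibly singular) cone, only the completely explicit codimension-one picture; what your version buys is a shorter, non-inductive argument, at the cost of invoking (or proving) the description of $I(\O_\tau)$ inside $k[\tau^\vee \cap M]$ for arbitrary $\tau$, which you would want to justify or cite precisely. Your closing remarks on matching the paper's pullback formalism with reduction mod the ideal, and on the integral-points convention for $\tau^\bot$, address exactly the right bookkeeping issues.
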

\begin{proof}
    We will do it by induction on $\dim \tau$. Base: $\dim \tau = 1$. Let $\chi \in M$ be a character such that $\chi\in \tau^\bot$. Since $\O_\tau$ is dense in $\overline \O_\tau$ we can check that two the restriction agree only on $\O_\tau$, so we could assume without loss of generality that $X_\Sigma = X_\tau$. Then $X_\Sigma \simeq T^{n - 1} \times \A^1$ and we get that the two restrictions agree by direct computation.  Now, the inductive step. There must be $\tau' \subset \tau$ such that $\dim \tau' = 1$. Restrictions to $\overline{\O_{\tau'}}$ agree by the base of induction. $\dim \tau/\tau' = \dim \tau - 1$, so restrictions to $\O_{\tau/\tau'} = \O_{\tau}$ from $\overline{\O_{\tau'}}$ agree by inductive hypothesis.
\end{proof}

\begin{claim}\label{linsys:equivar_div:claim:supp_func_orbit}
    Let $\Sigma$ be a fan, $D \in \CaDiv_{T^n} X_\Sigma$ be an equivariant Cartier divisor and $\tau \in \Sigma$ be a cone such that $D$ lifts along $\overline \O_\tau \to X_\Sigma$ (i.e. $\psi_D(\tau) = 0$). Recall that we have that $\overline \O_\tau \cong X_{\Sigma/\tau}$. Then $\psi_{D|_{\overline \O_\tau}} = \psi_D/\tau$.
\end{claim}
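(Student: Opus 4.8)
The plan is to verify the equality of the two functions on $|\Sigma/\tau|$ pointwise. Fix $\bar\mu\in|\Sigma/\tau|$, choose a cone $\sigma\in\Sigma$ with $\tau\subset\sigma$ and a lift $\mu\in\sigma$ with $\pi_\sigma(\mu)=\bar\mu$. By the very definition of $\psi_D/\tau$ we have $(\psi_D/\tau)(\bar\mu)=\psi_D(\mu)$, so it suffices to prove $\psi_{D|_{\overline\O_\tau}}(\bar\mu)=\psi_D(\mu)$. Write $\iota:\overline\O_\tau\hookrightarrow X_\Sigma$ for the inclusion and $\overline{\bar\mu}:\A^1\to\overline\O_\tau$ for the canonical extension of $\bar\mu:T^1\to\O_\tau$; then by definition of the support function on $\overline\O_\tau\cong X_{\Sigma/\tau}$ the left-hand side is $\deg\overline{\bar\mu}^*(\iota^*D)$, where $\iota^*D=D|_{\overline\O_\tau}$.

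The key idea is to realize the boundary curve $\iota\circ\overline{\bar\mu}$ as one stratum of a $2$-parameter toric degeneration. Pick $\gamma\in N$ in the relative interior of $\tau$ and let $\phi:\Z^2\to N$ send $e_1\mapsto\gamma$, $e_2\mapsto\mu$. Since $\gamma,\mu\in\sigma$, the positive quadrant maps into $\sigma$, so $\phi$ is a morphism of fans from the fan of $\A^2$ into $\Sigma$ and yields a toric morphism $\bar F:\A^2\to X_\Sigma$, $(s,t)\mapsto\gamma(s)\mu(t)$. Let $j:\A^1\hookrightarrow\A^2$, $t\mapsto(0,t)$, be the inclusion of the boundary component $V(s)$. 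I claim $\bar F\circ j=\iota\circ\overline{\bar\mu}$: on the dense torus both send $t\ne0$ to $\mu(t)\cdot x_\tau$, because $\lim_{s\to0}\gamma(s)=x_\tau$ (the distinguished point of $\O_\tau$, as $\gamma\in\operatorname{relint}\tau$) and the $T^n$-action on $\O_\tau$ carries the one-parameter subgroup $\mu$ to $\bar\mu$; agreement on a dense open forces equality of these two morphisms from $\A^1$. Denote this common map by $c$.

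Now $D$ lifts along $c$: its image contains $x_\tau=c(1)$, and $x_\tau\notin\Supp D$ since $\Supp D$ is $T^n$-invariant and $\O_\tau\not\subset\Supp D$ (because $\psi_D(\tau)=0$), so by \autoref{linsys:pullbacks_div:thm:equiv} the image is not contained in $\Supp D$. Applying \autoref{linsys:pullbacks_div:claim:functorial_non_eff} to the two factorizations of $c$ gives
\[
\psi_{D|_{\overline\O_\tau}}(\bar\mu)=\deg\overline{\bar\mu}^*(\iota^*D)=\deg c^*D=\deg j^*(\bar F^*D).
\]
By \autoref{linsys:equivar_div:claim:pullback_along_toric_morphism} we have $\psi_{\bar F^*D}=\psi_D\circ\phi$, so expanding in invariant prime divisors yields $\bar F^*D=\psi_D(\gamma)V(s)+\psi_D(\mu)V(t)$; as $\gamma\in\tau$ and $\psi_D(\tau)=0$ the first coefficient vanishes and $\bar F^*D=\psi_D(\mu)V(t)$. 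Finally $j^*V(t)=\div_{\A^1}(t)=\{0\}$ has degree $1$, so $\deg j^*(\bar F^*D)=\psi_D(\mu)$, which is exactly the desired equality.

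The main obstacle is the construction in the second paragraph: producing the toric morphism $\bar F$ whose restriction to a single boundary stratum recovers the orbit curve $\iota\circ\overline{\bar\mu}$, and checking the fan-morphism condition for $\phi$. Once this geometric identification is in place, the hypothesis $\psi_D(\tau)=0$ does all the remaining work — it guarantees that the relevant lifts exist and it kills the $V(s)$-coefficient — and the degree computation is immediate.
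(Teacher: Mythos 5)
Your argument is correct, but it is a genuinely different route from the one in the paper. The paper's proof localizes: it fixes $\sigma\supset\tau$, passes to $X_\sigma$ where $D=\div\chi$ is principal, observes that $\psi_D(\tau)=0$ forces $\chi\in\tau^\bot$ so that $\psi_D=\langle\chi,-\rangle\circ\pi$, and then reduces everything to \autoref{linsys:equivar_div:claim:character_orbit_restriction}, i.e.\ to the compatibility of restricting $\chi$ as a covector with restricting $\chi$ as a rational function to $\overline{\O_\tau}$ (itself proved by induction on $\dim\tau$). You instead work pointwise and globally with the definition of $\psi$ as a degree along one-parameter subgroups: the auxiliary toric morphism $\bar F:\A^2\to X_\Sigma$ built from $\phi(e_1)=\gamma\in\operatorname{relint}\tau$, $\phi(e_2)=\mu$ lets you identify the boundary curve $\iota\circ\overline{\bar\mu}$ with $\bar F\circ j$, after which \autoref{linsys:pullbacks_div:claim:functorial_non_eff} and \autoref{linsys:equivar_div:claim:pullback_along_toric_morphism} turn the claim into the one-line divisor computation $\bar F^*D=\psi_D(\gamma)V(s)+\psi_D(\mu)V(t)=\psi_D(\mu)V(t)$. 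All the steps check out: the fan-morphism condition for $\phi$ holds because $\gamma,\mu\in\sigma$; the identification $\bar F\circ j=\iota\circ\overline{\bar\mu}$ follows from $\lim_{s\to 0}\gamma(s)=x_\tau$ plus separatedness; and the lifting of $D$ along the common curve is guaranteed by $x_\tau\notin\Supp D$. What your approach buys is independence from the character-restriction lemma and from local principality of $D$; what it costs is the extra geometric construction of the two-parameter degeneration, whereas the paper's local argument is shorter once \autoref{linsys:equivar_div:claim:character_orbit_restriction} is in hand. Both proofs are valid and lean on previously established results of \autoref{linsys:ssec:pullbacks_div} and \autoref{linsys:ssec:equivar_div}.
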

\begin{proof}
    Denote $D_\tau \defeq D|_{\overline{\O_\tau}}$. Fix any $\sigma \in \Sigma$ such that $\tau \subset \sigma$ and denote by $\pi: \sigma \to \sigma/\tau$ the canonical projection. We want to show that $\psi_D = \psi_{D_\tau}|_{\sigma/\tau} \circ \pi$ --- since $\pi$ is an epimorphism, it would prove that $\psi_{D_\tau}|_{\sigma/\tau} = (\psi_D/\tau)|_{\sigma/\tau}$. Without loss of generality $X_\Sigma = X_\sigma$. Then $D = \div \chi$. Since $\psi_D(\tau) = 0$, we get that $\chi \in \tau^\bot$ -- i.e. $\psi_D: \sigma \to \Z$ is a linear function that is identically zero on $\tau$, hence $\psi_D = \langle \chi, - \rangle \circ \pi$, where $\chi$ is considered as a linear map $\sigma/\tau \to \Z$. It remains to show that restricting characters as covectors and restricting characters as rational functions to a closed subvariety is the same thing which is exactly \autoref{linsys:equivar_div:claim:character_orbit_restriction}. 
\end{proof}

\subsection{Equivariant Linear Systems and Combinatorial Data}\label{linsys:ssec:equivar_ls}

\begin{definition}
    Let $\Sigma$ be a fan in $N$, $A \subset M$ be a finite subset, $\psi: |\Sigma| \to \Z$ be a function that is linear on cones. We call the pair $(A, \psi)$ \textbf{a linear system combinatorial datum} if the condition $A|_{|\Sigma|} + \psi \ge 0$ is satisfied, i.e. if for any $\lambda \in |\Sigma|$ and any $\chi \in A$ we have that $\langle \chi, \lambda \rangle + \psi(\lambda) \ge 0$.
\end{definition}

\begin{definition}
    Let $\Sigma$ be a fan and $(A, \psi)$ be a linear system combinatorial datum. We define the equivariant linear system $\d(A, \psi)$ as follows. By \autoref{linsys:equivar_div:thm:homomorphism} there is a unique equivariant Cartier divisor $D \in \CaDiv_{T^n} X_\Sigma$ such that $\psi = \psi_D$. By \autoref{linsys:equivar_div:claim:global_secs} $A \subset \Gamma(X_\Sigma, \O(D))$. So, we define the equivariant linear system as follows: $\d(A, \psi) \defeq (k \cdot A, \O(D))$.
\end{definition}

\begin{theorem}\label{linsys:equivar_ls:thm:classification}
    Let $\Sigma$ be a fan. Then for any equivariant linear system $\d$ on $X_\Sigma$ there is a linear system combinatorial datum $(A, \psi)$ such that $\d \simeq \d(A, \psi)$. Moreover, if $(A, \psi)$, $(A', \psi')$ are linear system combinatorial data, then $\d(A, \psi) \simeq \d(A', \psi')$ if and only if there is $\chi \in M$ such that $A = \chi \cdot A'$, $\psi = \psi' - \langle \chi, - \rangle|_{|\Sigma|}$.
\end{theorem}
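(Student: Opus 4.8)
The plan is to split the statement into an existence part (every equivariant $\d$ arises as some $\d(A,\psi)$) and a classification part (when two data give isomorphic systems), leaning throughout on the dictionary already set up: the isomorphism $D\mapsto\psi_D$ of \autoref{linsys:equivar_div:thm:homomorphism}, the description of global sections in \autoref{linsys:equivar_div:claim:global_secs}, and Artin's linear independence of characters.

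For existence, I would first produce an equivariant divisor inside $\d=(V,\mathcal L)$. The equivariance of the linear system endows $\mathcal L$ with a $T^n$-linearization for which $V\subseteq\Gamma(X_\Sigma,\mathcal L)$ is a $T^n$-submodule; since $T^n$ is diagonalizable, $V$ splits into weight spaces, and I pick a nonzero weight vector $s$, say $t\cdot s=\theta(t)\,s$. Because rescaling a section by a nonzero scalar does not change its divisor (\autoref{prelim:div_lin_sys:claim:sections}), the divisor $D_0:=\div_{\mathcal L}s$ satisfies $t^*D_0=D_0$ for all $t$, i.e. $D_0\in\CaDiv_{T^n}X_\Sigma$, and moreover $\mathcal L\simeq\O(D_0)$. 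Set $\psi:=\psi_{D_0}$ and transport $V$ through an isomorphism $\mathcal L\xrightarrow{\sim}\O(D_0)$ to a subspace $V'\subseteq\Gamma(X_\Sigma,\O(D_0))\subseteq R(X_\Sigma)$. The one point to check is that $V'$ is invariant under the standard $T^n$-action on $R(X_\Sigma)$ (for which each character is a weight vector): the isomorphism carries $\mathcal L$'s linearization to \emph{some} linearization of $\O(D_0)$, which differs from the canonical one by a character twist, and a uniform character twist only rescales the action, hence preserves the property of being an invariant subspace. By \autoref{linsys:equivar_div:claim:global_secs}, $\Gamma(X_\Sigma,\O(D_0))=k\cdot B$ with $B=\{\chi\in M : \langle\chi,-\rangle|_{|\Sigma|}+\psi\ge 0\}$, so invariance of $V'$ together with Artin's lemma forces $V'=k\cdot A$ with $A:=V'\cap M\subseteq B$ (finite since $\dim_k V'<\infty$). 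Then $(A,\psi)$ is a linear system combinatorial datum and $\d(A,\psi)=(k\cdot A,\O(D_0))\simeq\d$.

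For the classification part I would write both systems as $\d(A,\psi)=(k\cdot A,\O(D))$ and $\d(A',\psi')=(k\cdot A',\O(D'))$ with $\psi_D=\psi$, $\psi_{D'}=\psi'$. The key observation is that any isomorphism $\phi\colon\O(D)\to\O(D')$ of these fractional-ideal subsheaves of $\underline{R(X_\Sigma)}$ is multiplication by a single $g\in R(X_\Sigma)^\times$: indeed $\Hom_{\O_{X_\Sigma}}(\O(D),\underline{R(X_\Sigma)})\cong R(X_\Sigma)$ acts by multiplication (torsion-freeness, exactly as in \autoref{prelim:div_lin_sys:claim:sections}), so $\phi=g\cdot(-)$ with $g\cdot\O(D)=\O(D')$, whence $\div g=D-D'$ by \autoref{prelim:div_lin_sys:claim:divisorial_eq}, and on global sections $g\cdot(k\cdot A)=k\cdot A'$. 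For the forward direction I then show $g$ is a scalar times a character: choosing $\chi_0\in A$ gives $g\chi_0\in k\cdot A'\subseteq k[M]$, so $g=\chi_0^{-1}(g\chi_0)\in k[M]$, and the same applied to $\phi^{-1}=g^{-1}\cdot(-)$ gives $g^{-1}\in k[M]$; hence $g$ is a unit of $k[M]$, i.e. $g=c\mu$ with $c\in k^\times$, $\mu\in M$. Putting $\chi:=\mu^{-1}$, the relation $\div g=\div\mu=D-D'$ yields $\psi=\psi'-\langle\chi,-\rangle|_{|\Sigma|}$ through \autoref{linsys:equivar_div:thm:homomorphism} and the example $\psi_{\div\chi}=\langle\chi,-\rangle|_{|\Sigma|}$, while comparing monomial supports in $g\cdot(k\cdot A)=k\cdot A'$ (characters being linearly independent) gives $\mu\cdot A=A'$, i.e. $A=\chi\cdot A'$. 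Conversely, given such a $\chi$, the same computation shows $D'=D+\div\chi$, so multiplication by $\chi^{-1}$ is an isomorphism $\O(D)\xrightarrow{\sim}\O(D')$ carrying $k\cdot A$ onto $k\cdot\chi^{-1}A=k\cdot A'$, hence an isomorphism $\d(A,\psi)\xrightarrow{\sim}\d(A',\psi')$.

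I expect the only genuinely delicate point to be the existence of the equivariant divisor $D_0$ and the verification that the transported space $V'$ is invariant under the standard torus action on $R(X_\Sigma)$; everything afterwards is a formal consequence of \autoref{linsys:equivar_div:thm:homomorphism}, \autoref{linsys:equivar_div:claim:global_secs}, and the unit structure of $k[M]$. In particular one must keep track of the character twist relating the two linearizations of $\O(D_0)$, but since such a twist merely rescales the action it never affects invariance of subspaces, so it poses no real obstruction.
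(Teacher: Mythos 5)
Your proof is correct in substance and has the same overall skeleton as the paper's (produce an equivariant model $\O(D)$, identify $V$ with $k\cdot A$ via Artin's lemma, then prove the two directions of the classification), but both halves take genuinely different routes at the key step. For existence, the paper does not touch linearizations at all: it invokes $\Pic X_\Sigma = \Pic_{T^n}X_\Sigma$ (any line bundle is $\O(D)$ for a divisor that, being trivial on $T^n$, is linearly equivalent to one supported on $X_\Sigma\setminus T^n$ and hence automatically equivariant), fixes such a $D$ with $\mathcal L\simeq\O(D)$, embeds $V\subset\Gamma(X_\Sigma,\O(D))\subset k\cdot M$, and concludes $V=k\cdot(V\cap M)$ from Artin's lemma after passing to an infinite base field. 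Your route instead extracts an equivariant divisor as $\div_{\mathcal L}s$ for a weight vector $s\in V$; this is essentially a proof of \autoref{linsys:equivar_ls:cor:ls_contains_equivar_div} done first, and it works, but it leans on the assertion that the paper's definition of equivariance (the image of $\mathcal L^\vee\to\O\otimes V^\vee$ is an equivariant subsheaf, i.e.\ a condition checked pointwise on $T^n(\bar k)$) yields an algebraic $T^n$-linearization of $\mathcal L$ under which $V$ decomposes into weight spaces. That implication is true but is the one step you assert rather than derive, and it is not a one-liner from the paper's definitions; the paper's $\Pic$-based detour exists precisely to avoid it (though the paper, too, silently uses that $V\subset k\cdot M$ is an invariant subspace when it applies Artin's lemma, so the gap is comparable rather than worse). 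For the classification, your argument is a mild streamlining: you realize any isomorphism $\O(D)\to\O(D')$ as multiplication by a single $g\in R(X_\Sigma)^\times$ and show directly that $g$ is a unit of $k[M]$ (since $g\chi_0\in k\cdot A'$ and $g^{-1}\chi_0'\in k\cdot A$ force $g^{\pm1}\in k[T^n]$), whereas the paper first reduces to $D=D'$ via \autoref{prelim:div_lin_sys:claim:divisorial_eq} and then classifies automorphisms of $\O(D)$ as multiplication by $\lambda\mu$ with $\mu\in M\cap|\Sigma|^\bot$; the two arguments prove the same thing and your single-step version is arguably cleaner. One small caveat on a side remark: $\dim_k V<\infty$ is not automatic for non-complete $X_\Sigma$, so finiteness of $A$ should not be justified that way (the paper is equally silent on this point).
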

\begin{proof}
    Fix an equivariant linear system $\d = (V, \mathcal L)$. Since\footnote{because $\Pic T^n = 0$ and any divisor supported on $X_\Sigma\backslash T^n$ is equivariant} $\Pic X_\Sigma = \Pic_{T^n} X_\Sigma$, there is an equivariant Cartier divisor $D \in \CaDiv_{T^n} X_\Sigma$  such that $\mathcal L \simeq \O(D)$, so we could assume without loss of generality that $\mathcal L = \O(D)$. Now we get the natural embedding $V \subset \Gamma(X, \O(D)) \subset k[T^n] = k \cdot M$, because $D|_{T^n} = 0$. Define $A \defeq V \cap M$ --- we will show that $V = k \cdot A$. To do that, we could replace $k$ with arbitrary field extension, in particular we can make $k$ infinite. For infinite $k$ the equality follows from Artin's lemma on characters. Thus, we proved $\d \simeq \d(A, \psi_D)$.

    Now we prove that if $A = \chi\cdot A'$ and $\psi = \psi' - \langle \chi, - \rangle$, then $\d(A, \psi) \simeq \d(A', \psi')$. Let $D, D' \in \CaDiv_{T^n}$ be such that $\psi = \psi_D$, $\psi' = \psi_{D'}$ --- then $\d(A, \psi) = (k \cdot A, \O(D))$, $\d(A', \psi') = (k \cdot A', \O(D'))$. By \autoref{linsys:equivar_div:thm:homomorphism} we have that $D = D' - \div \chi$, so we can define the morphism $\O(D) \to \O(D')$, $f \mapsto \chi \cdot f$ which is clearly an isomorphism (its inverse is $\chi^{-1} \cdot -$) that maps $k \cdot A$ onto $k \cdot A'$, Hence $\d(A, \psi) \simeq \d(A', \psi')$. 

    Finally, assume that $\d(A, \psi) \simeq \d(A', \psi')$. Let $D, D' \in \CaDiv_{T^n}$ be such that $\psi = \psi_D$, $\psi' = \psi_{D'}$. In particular, we get that $\O(D) \simeq \O(D')$, so by \autoref{prelim:div_lin_sys:claim:divisorial_eq} $D - D' = \div \chi$ for some\footnote{a priori we only know that $D - D' = \div f$, but $\div f$ must be equivariant, so $f$ must be proportional to a character} $\chi \in M$. Replacing $D'$ with $D + \langle\chi, - \rangle$ and $A'$ with $\chi^{-1} \cdot A$ we can assume that $\psi = \psi'$ and $D = D'$. Since $\O(D)$ is of rank 1 and torsion-free, any automorphism $\O(D) \to \O(D)$ must come from\footnote{from torsionless we get that any automorphism is determined by its germ at the generic point, but $\O(D)_\eta \simeq k(X_\Sigma)$, where $\eta$ is the generic point, and all automorphisms of $k(X)$ are multiplications by elements of $k(X_\Sigma)^\times$. If $f \in k(X_\Sigma)$ is such that $f \cdot \O(D) \subset \O(D)$, $f^{-1} \cdot \O(D) \subset \O(D)$, then $f \in \O_{X_\Sigma, \xi}^\times$ for any $\xi \in X_\Sigma$ such that $\dim \O_{X, \xi} = 1$, so by Hartogs' lemma $f, f^{-1}$ are regular on $X_\Sigma$.} a multiplication by an invertible regular function that is regular on $X_\Sigma$. All regular invertible functions on $X_\Sigma$ are of the form $\lambda \mu$, where $\lambda \in k^\times$, $\mu \in M \cap |\Sigma|^\bot$. So, the map $k\cdot A \to k \cdot A'$ induced by our isomorphism $\d(A, \psi) \simeq \d(A', \psi)$ must be of the form $f \mapsto \lambda \mu \cdot f$. Hence, $\mu \cdot A = A'$. As $\psi + \langle \mu, - \rangle|_{|\Sigma|} = \psi$ (because $\mu \in |\Sigma|^\bot$), we are done.
\end{proof}

\begin{corollary}\label{linsys:equivar_ls:cor:ls_contains_equivar_div}
    If $\d$ is an equivariant linear system on a toric variety $X_\Sigma$, then there is an equivariant divisor $D \in \d$.
\end{corollary}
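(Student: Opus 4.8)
The plan is to reduce immediately to the normal form supplied by \autoref{linsys:equivar_ls:thm:classification} and then exhibit an explicit equivariant divisor coming from a character. By the existence part of \autoref{linsys:equivar_ls:thm:classification}, there is a linear system combinatorial datum $(A, \psi)$ with $\d \simeq \d(A, \psi) = (k \cdot A, \O(D))$, where $D \in \CaDiv_{T^n} X_\Sigma$ is the equivariant Cartier divisor with $\psi_D = \psi$. Since $V = k \cdot A$ is non-zero, the finite set $A \subset M$ is non-empty, so I may pick a character $\chi \in A$; viewed as a global section $\chi \in \Gamma(X_\Sigma, \O(D))$ it is non-zero.

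First I would compute the divisor cut out by $\chi$. By the Example computing $\div_{\mathcal L} f$ for $\mathcal L = \O(D)$ and $f \in \Gamma(X_\Sigma, \O(D))$, we have $\div_{\O(D)} \chi = D + \div \chi$, and by definition of membership this effective Cartier divisor lies in $\d(A, \psi)$. Next I would check that $D + \div \chi$ is equivariant. The divisor $D$ is equivariant by construction, and the principal divisor $\div \chi$ of a character is equivariant since $t^* \chi$ differs from $\chi$ by the scalar $\chi(t) \in k^\times$ for every $t \in T^n$, so that $\div t^*\chi = \div \chi$. As equivariant divisors form a subgroup (cf. the equivariance examples in \autoref{prelim:ssec:equivariance}), the sum $D + \div \chi$ is equivariant.

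Finally, I would transport this conclusion back along the isomorphism $\d \simeq \d(A, \psi)$, and this is the one point I would flag as the crux: one must know that an isomorphism of linear systems does not change the set of divisors it contains. Concretely, if $\phi : \mathcal L \xrightarrow{\sim} \mathcal L'$ realizes $\d(A,\psi) \simeq \d$ and $s' = \phi(s)$, then dualizing gives $s'^\vee(\mathcal L'^\vee) = s^\vee(\phi^\vee(\mathcal L'^\vee)) = s^\vee(\mathcal L^\vee)$, the last equality because $\phi^\vee$ is an isomorphism of sheaves; hence the ideal sheaves of $\div_{\mathcal L'} s'$ and $\div_{\mathcal L} s$ coincide and the divisors are equal by \autoref{prelim:div_lin_sys:claim:divisorial_eq}. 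Consequently the equivariant divisor $D + \div \chi$ is also a member of $\d$, which is exactly the assertion.

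I expect no genuine difficulty beyond this last bookkeeping step; the substance is entirely contained in the classification theorem and the formula $\div_{\O(D)} \chi = D + \div \chi$, and the remainder is the observation that both summands are equivariant.
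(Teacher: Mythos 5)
Your proposal is correct and follows essentially the same route as the paper's proof: invoke \autoref{linsys:equivar_ls:thm:classification} to write $\d$ in the form $\d(A,\psi)$, pick $\chi \in A$, and observe that $D + \div\chi \in \d$ is equivariant. The extra care you take — verifying that $\div\chi$ is equivariant and that an isomorphism of linear systems preserves the set of member divisors — is sound and merely makes explicit what the paper leaves implicit.
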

\begin{proof}
    By \autoref{linsys:equivar_ls:thm:classification} $\d = \d(A, \psi)$ for some combinatorial datum $(A, \psi)$. Let $D_0 \in \CaDiv_{T^n} X_\Sigma$ be such that $\psi = \psi_{D_0}$. Fix any $\chi \in A$ --- then $D \defeq D_0 + \div \chi \in \d$ is equivariant.
\end{proof}

\begin{claim}
    Let $\phi: \Sigma \to \Sigma'$ be a morphism of fans and $\Phi: X_\Sigma \to X_{\Sigma'}$ be the corresponding toric morphism of toric varieties. Then any equivariant linear system $\d$ on $X_\Sigma'$ lifts along $\Phi$. Moreover, if $\d = \d(A, \psi)$, then $\Phi^* \d = \d(\phi^*(A), \psi \circ \phi)$, where $\phi^*: M' \to M$ is the morphism on character lattices induced by the fans morphism $\phi$.
\end{claim}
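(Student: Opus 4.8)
The statement has two parts: that $\d$ lifts along $\Phi$, and the explicit computation of the pullback. The plan for the first part is to reduce everything to the divisor picture. By \autoref{linsys:equivar_ls:cor:ls_contains_equivar_div} the equivariant linear system $\d$ contains an equivariant Cartier divisor $D \in \d$, and by \autoref{linsys:equivar_div:claim:pullback_along_toric_morphism} every equivariant Cartier divisor on $X_{\Sigma'}$ lifts along the toric morphism $\Phi$. Then \autoref{linsys:pullbacks_linsys:claim:criterion}, which says that a linear system lifts along a morphism precisely when it contains a divisor that lifts, immediately gives that $\d$ lifts along $\Phi$. This part is just a chaining of already-established facts, so I expect no difficulty.

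For the second part I would fix the model $\d = \d(A, \psi) = (k \cdot A, \O(D))$, where $D \in \CaDiv_{T^n} X_{\Sigma'}$ is the equivariant divisor with $\psi_D = \psi$ and the inclusion $k \cdot A \subset \Gamma(X_{\Sigma'}, \O(D))$ is the one furnished by \autoref{linsys:equivar_div:claim:global_secs}. By definition $\Phi^* \d = (V', \Phi^* \O(D))$ with $V' = \Im\bigl(k \cdot A \to \Gamma(X_\Sigma, \Phi^*\O(D))\bigr)$, and I would identify the line bundle and the space of sections separately. For the line bundle, \autoref{linsys:pullbacks_div:rem:pullback_commutes_w_O} gives $\Phi^* \O(D) \cong \O(\Phi^* D)$, while \autoref{linsys:equivar_div:claim:pullback_along_toric_morphism} gives $\psi_{\Phi^* D} = \psi_D \circ \phi = \psi \circ \phi$; so up to this natural isomorphism the underlying bundle is exactly $\O$ of the equivariant divisor with support function $\psi \circ \phi$, which matches the right-hand side $\d(\phi^*(A), \psi \circ \phi)$. (Here I read the "$=$" of the statement as isomorphism of linear systems in the sense of \autoref{linsys:equivar_ls:thm:classification}, realized by the canonical identification above.)

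The core step is to identify $V'$ with $k \cdot \phi^*(A)$. The key observation is that, under the embedding $\O(D) \hookrightarrow \underline{R(X_{\Sigma'})}$, each $\chi \in A \subset M'$ is literally the rational function $\chi$, and the section-pullback map is compatible with the embeddings into rational functions by the very construction of $\Phi^* D$ in \autoref{linsys:pullbacks_div:cor:pullback_of_a_divisor_is_divisor}; hence it sends $\chi$ to the honest composite $\chi \circ \Phi$. Since $\Phi$ restricts on the open tori to the group homomorphism dual to $\phi$, this composite is exactly the character $\phi^*(\chi) \in M$, whence $V' = k \cdot \{\phi^*(\chi) : \chi \in A\} = k \cdot \phi^*(A)$. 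To close the argument I would verify that $(\phi^*(A), \psi \circ \phi)$ really is a combinatorial datum: for $\chi \in A$ and $\lambda \in |\Sigma|$ we have $\langle \phi^*(\chi), \lambda \rangle = \langle \chi, \phi(\lambda) \rangle$ and $\phi(\lambda) \in |\Sigma'|$, so the required inequality $\langle \phi^*(\chi), \lambda \rangle + (\psi \circ \phi)(\lambda) \ge 0$ is just the datum condition for $(A, \psi)$ evaluated at $\phi(\lambda)$.

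The main obstacle I anticipate is the bookkeeping in this core step: making precise that the abstract pullback $\Gamma(X_{\Sigma'}, \O(D)) \to \Gamma(X_\Sigma, \Phi^*\O(D)) \cong \Gamma(X_\Sigma, \O(\Phi^* D))$ is genuinely "compose the rational function with $\Phi$", so that it coincides with the dual lattice map $\phi^*$ on the characters spanning $A$. Everything else is a direct application of the lemmas developed earlier in this section.
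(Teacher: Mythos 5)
Your proposal follows exactly the same route as the paper's proof: the lifting assertion via \autoref{linsys:equivar_ls:cor:ls_contains_equivar_div}, \autoref{linsys:equivar_div:claim:pullback_along_toric_morphism} and \autoref{linsys:pullbacks_linsys:claim:criterion}, and the identification of the datum via \autoref{linsys:pullbacks_div:rem:pullback_commutes_w_O} together with $\psi_{\Phi^*D} = \psi \circ \phi$ and the observation that $A$ maps to $\phi^*(A)$. If anything, you supply more detail than the paper, which dismisses the character-pullback computation with an ``obviously''; your argument is correct.
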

\begin{proof}
    By \autoref{linsys:equivar_ls:cor:ls_contains_equivar_div} there is an equivariant divisor $D \in \d$. By \autoref{linsys:equivar_div:claim:pullback_along_toric_morphism} $D$ lifts along $\Phi$. Hence, by \autoref{linsys:pullbacks_linsys:claim:criterion} $\d$ lifts along $\Phi$.

    Now, let $\d = \d(A, \psi)$. Let $D \in \CaDiv_{T^n} X_\Sigma$ be such that $\psi = \psi_D$. Then we have $\d =(k \cdot A, \O(D))$. By \autoref{linsys:pullbacks_div:rem:pullback_commutes_w_O} $\Phi^*(\O(D)) \cong \O(\Phi^*D)$ and by \autoref{linsys:equivar_div:claim:pullback_along_toric_morphism} $\psi_{\Phi^* D} =~\psi_D \circ \phi = \psi \circ \phi$. Obviously the image of $A$ under $\Gamma(X_{\Sigma'}, \O(D)) \to~\Gamma(X_\Sigma, \O(\Phi^*D))$ is $\phi^*(A)$. Thus, $\Phi^* \d = \d(\phi^*(A), \psi \circ \phi)$.
\end{proof}

\begin{claim}\label{linsys:equivar_ls:claim:lifting_criterion}
    Let $X_\Sigma$ be a toric variety and $\d = \d(A, \psi)$ be an equivariant linear system on $X_\Sigma$, $\sigma \in \Sigma$ be a cone. Then the following are equivalent:
    \begin{enumerate}[1)]
        \item $\d$ lifts along $\O_\sigma \xhookrightarrow{} X_\Sigma$;
        \item $\d$ lifts along $\overline{\O_\sigma} \xhookrightarrow{} X_\Sigma$;
        \item There is $\chi \in A$ such that $(\langle \chi, - \rangle + \psi_D)|_\sigma \equiv 0$.
    \end{enumerate}
\end{claim}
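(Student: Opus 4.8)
The plan is to deduce $1)\iff 2)$ directly from the immersion/closure invariance of lifting, and to obtain $1)\iff 3)$ by combining the criterion ``$\d$ lifts iff some member of $\d$ lifts'' with the already-established support-function criterion for equivariant Cartier divisors.

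For $1)\iff 2)$ I would simply invoke \autoref{linsys:pullbacks_linsys:claim:immersion_closure}: since $\O_\sigma$ is an integral locally closed subscheme whose closure is $\overline{\O_\sigma}$, the linear system $\d$ lifts along $\O_\sigma\hookrightarrow X_\Sigma$ if and only if it lifts along $\overline{\O_\sigma}\hookrightarrow X_\Sigma$.

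For $1)\iff 3)$ write $\d=(k\cdot A,\O(D))$, where $D\in\CaDiv_{T^n}X_\Sigma$ is the equivariant divisor with $\psi=\psi_D$. By \autoref{linsys:pullbacks_linsys:claim:criterion}, $\d$ lifts along $\O_\sigma$ if and only if some divisor $\div_{\O(D)}s=D+\div s$ with $s\in k\cdot A\setminus 0$ lifts along $\O_\sigma$. Since $A$ spans $k\cdot A$, every such $s$ is a finite $k$-linear combination of the characters $\chi\in A$, so \autoref{linsys:pullbacks_ls:claim:lin_comb_of_secs} lets me descend to a single generator: $\d$ lifts along $\O_\sigma$ if and only if $D+\div\chi$ lifts along $\O_\sigma$ for some $\chi\in A$. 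Finally I translate this divisorial condition into support functions: by \autoref{linsys:equivar_div:thm:homomorphism} together with $\psi_{\div\chi}=\langle\chi,-\rangle|_{|\Sigma|}$ we have $\psi_{D+\div\chi}=\psi_D+\langle\chi,-\rangle|_{|\Sigma|}$, and by \autoref{linsys:equivar_div:claim:orbit_lifting} the equivariant divisor $D+\div\chi$ lifts along $\O_\sigma$ exactly when its support function vanishes on $\sigma$, i.e. when $(\langle\chi,-\rangle+\psi_D)|_\sigma\equiv 0$. Chaining these equivalences yields condition $3)$.

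The only genuinely delicate point is the reduction from an arbitrary section of $\d$ to the spanning characters of $A$: a priori a divisor $D+\div s$ of $\d$ could conceivably lift along $\O_\sigma$ without any single $D+\div\chi$ doing so, and it is exactly \autoref{linsys:pullbacks_ls:claim:lin_comb_of_secs} that rules this out (it supplies the otherwise-missing implication). Everything else is a mechanical substitution of support functions, so I expect no further obstacles.
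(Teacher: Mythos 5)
Your proof is correct and follows essentially the same route as the paper: $1)\iff 2)$ via \autoref{linsys:pullbacks_linsys:claim:immersion_closure}, and $1)\iff 3)$ by combining \autoref{linsys:pullbacks_linsys:claim:criterion} with \autoref{linsys:pullbacks_ls:claim:lin_comb_of_secs} to reduce to a single character, then translating through \autoref{linsys:equivar_div:claim:orbit_lifting} and the support-function homomorphism. You also correctly identify the reduction from an arbitrary section to a single $\chi\in A$ as the one nontrivial step, which is exactly where the paper invokes the same lemma.
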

\begin{proof}
    \begin{description}
        \item[$1) \iff 2)$] \autoref{linsys:pullbacks_linsys:claim:immersion_closure}
        \item[$3) \implies 1)$] If there is such $\chi$, then denote by $D \in \CaDiv_{T^n}^+ X_\Sigma$ the effective equivariant Cartier divisor $D$ such that $\psi_D = \psi + \langle \chi, - \rangle|_{|\Sigma|}$ --- obviously $D \in \d$. By \autoref{linsys:equivar_div:claim:orbit_lifting} $D$ lifts along $\O_\sigma \xhookrightarrow{} X_\Sigma$, so by \autoref{linsys:pullbacks_linsys:claim:criterion} $\d$ lifts along $\O_\sigma \xhookrightarrow{} X_\Sigma$.
        \item[$1) \implies 3)$] Let $D \in \CaDiv_{T^n} X_\Sigma$ be such that $\psi = \psi_{D}$. Then $\d = (k \cdot A, \O(D)))$. By \autoref{linsys:pullbacks_linsys:claim:criterion} there is $D_0 \in \d$ that lifts along $\O_\sigma \xhookrightarrow{} X_\Sigma$, so there $s \in k \cdot A$ such that $D_0 = \div_{\O(D)} s$ and $\div_{\O(D)} s$ lifts along $\O_\sigma \xhookrightarrow{} X_\Sigma$. By \autoref{linsys:pullbacks_ls:claim:lin_comb_of_secs} there is $\chi \in A$ such that $\div_{\O(D)} \chi = D + \div \chi$ lifts along $\O_\sigma \xhookrightarrow{} X_\Sigma$. By \autoref{linsys:equivar_div:claim:orbit_lifting} $(\psi_D + \langle \chi, - \rangle)|_\sigma \equiv 0$. Since $\psi_D = \psi$, we are done.
    \end{description}
\end{proof}

\begin{claim}
    Let $X_\Sigma$ be a toric variety and $\d = \d(A, \psi)$ be an equivariant linear system on $X_\Sigma$, $\tau \in \Sigma$ be a cone such that $\psi(\tau) = 0$. Then $\d|_{\overline \O_\tau} \simeq \d(A \cap \tau^\bot, \psi/\tau)$.
\end{claim}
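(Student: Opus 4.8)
The plan is to unwind the definition of the pullback linear system along the closed immersion $\iota \colon \overline{\O_\tau} \hookrightarrow X_\Sigma$ and to identify its line bundle and its space of sections separately. Writing $\d = \d(A,\psi) = (k\cdot A, \O(D))$ with $\psi = \psi_D$, the pullback is $\iota^*\d = (V', \iota^*\O(D))$, where $V' = \Im\big(k\cdot A \to \Gamma(\overline{\O_\tau}, \iota^*\O(D))\big)$. Since $\psi(\tau)=0$, the divisor $D$ lifts along $\iota$ by \autoref{linsys:equivar_div:claim:orbit_lifting}, so $\iota^*\O(D) \cong \O(D|_{\overline{\O_\tau}})$ by \autoref{linsys:pullbacks_div:rem:pullback_commutes_w_O}, and \autoref{linsys:equivar_div:claim:supp_func_orbit} gives $\psi_{D|_{\overline{\O_\tau}}} = \psi/\tau$. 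Thus the line bundle of $\iota^*\d$ is exactly $\O(E)$ with $\psi_E = \psi/\tau$, which is the line bundle of $\d(A\cap\tau^\bot, \psi/\tau)$; the work is therefore to show $V' = k\cdot(A\cap\tau^\bot)$.

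For the space of sections, I would note that $k\cdot A$ is spanned by the characters $\chi \in A$ regarded as sections of $\O(D)$, so $V'$ is spanned by the pullbacks $\iota^*\chi$, and the point is to decide which of these vanish. Since $\div_{\O(D)}\chi = D + \div\chi$, the vanishing criterion \autoref{prelim:div_lin_sys:claim:sec_vanishing_criterion} shows $\iota^*\chi = 0$ iff $\overline{\O_\tau} \subset \Supp(D+\div\chi)$, i.e. iff $D+\div\chi$ fails to lift along $\iota$ (\autoref{linsys:pullbacks_div:thm:equiv}). By \autoref{linsys:equivar_div:claim:orbit_lifting} this is controlled by $\psi_{D+\div\chi}|_\tau = (\psi + \langle\chi,-\rangle)|_\tau$, and using $\psi(\tau)=0$ it follows that $\iota^*\chi$ is nonzero precisely when $\chi \in \tau^\bot$.

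Consequently $V'$ is spanned by $\{\iota^*\chi : \chi \in A\cap\tau^\bot\}$. To conclude I would identify each such $\iota^*\chi$ with the restricted character $\chi|_{\overline{\O_\tau}} \in M_\tau$, which is legitimate because by \autoref{linsys:equivar_div:claim:character_orbit_restriction} the assignment $\tau^\bot \to M_\tau$, $\chi \mapsto \chi|_{\overline{\O_\tau}}$, is the natural isomorphism; in particular it is injective, so distinct elements of $A\cap\tau^\bot$ give distinct characters and no collapsing occurs. Hence $V' = k\cdot(A\cap\tau^\bot)$ and $\iota^*\d = (k\cdot(A\cap\tau^\bot), \O(E)) = \d(A\cap\tau^\bot, \psi/\tau)$. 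I would also remark at the start that the statement presupposes $\d$ lifts along $\iota$: by \autoref{linsys:equivar_ls:claim:lifting_criterion} combined with $\psi(\tau)=0$ this is equivalent to $A\cap\tau^\bot \neq \varnothing$, which is exactly what is needed for $\d(A\cap\tau^\bot, \psi/\tau)$ to be defined, and one checks directly that $(A\cap\tau^\bot, \psi/\tau)$ is a valid combinatorial datum on $\Sigma/\tau$ since $\langle \chi|_{\overline{\O_\tau}}, \pi(\lambda)\rangle + (\psi/\tau)(\pi(\lambda)) = \langle\chi,\lambda\rangle + \psi(\lambda) \ge 0$.

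The main obstacle is not conceptual but one of bookkeeping: one must keep straight the three a priori different notions of restricting a character --- as a section under $\iota^*$, as a covector under the projection $N \to N_\tau$ paired against $\tau^\bot$, and as an element of the character lattice $M_\tau$ --- and verify their compatibility, which is precisely the content packaged in \autoref{linsys:equivar_div:claim:character_orbit_restriction} and \autoref{linsys:equivar_div:claim:supp_func_orbit}. Once these are in hand the argument is a direct assembly of the criteria above.
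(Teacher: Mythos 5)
Your proposal is correct and follows essentially the same route as the paper's proof: identify the pulled-back line bundle via \autoref{linsys:pullbacks_div:rem:pullback_commutes_w_O} and \autoref{linsys:equivar_div:claim:supp_func_orbit}, then sort the characters of $A$ into those in $\tau^\bot$ (which restrict to the corresponding characters of $\overline{\O_\tau}$ by \autoref{linsys:equivar_div:claim:character_orbit_restriction}) and those outside $\tau^\bot$ (which pull back to zero by \autoref{linsys:equivar_div:claim:orbit_lifting}, \autoref{linsys:pullbacks_div:thm:equiv} and \autoref{prelim:div_lin_sys:claim:sec_vanishing_criterion}). Your added remarks on injectivity of $\chi \mapsto \chi|_{\overline{\O_\tau}}$, on the non-degeneracy hypothesis $A \cap \tau^\bot \neq \varnothing$, and on the validity of the restricted combinatorial datum are sensible refinements but do not change the argument.
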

\begin{proof}
    Let $D \in \CaDiv_{T^n} X_\Sigma$ be such that $\psi = \psi_D$. Then $\d = (k \cdot A, \O(D))$. By \autoref{linsys:pullbacks_div:rem:pullback_commutes_w_O} $\O(D)|_{\overline \O_\tau} \cong \O(D|_{\overline \O_\tau})$ and by \autoref{linsys:equivar_div:claim:supp_func_orbit} $\psi_{D|_{\overline \O_\tau}} = \psi/\tau$. By \autoref{linsys:equivar_div:claim:character_orbit_restriction} we have $A \cap \tau^\bot \subset \O(\overline \O_\tau, \O(D|_{\overline{\O_\tau}}))$. It remains to show that the image of $A \backslash \tau^\bot$ under $\Gamma(X_\Sigma, \O(D)) \to \Gamma(\overline \O_\tau, \O(D)|_{\overline \O_\tau})$ is zero. For any $\chi \in A \backslash \tau^\bot$ we have that $\overline \O_\tau \subset \Supp \div_{\O(D)} \chi$ by \autoref{linsys:equivar_div:claim:orbit_lifting} and \autoref{linsys:pullbacks_div:thm:equiv}, so from \autoref{prelim:div_lin_sys:claim:sec_vanishing_criterion} we have that the image of $\chi$ in $\Gamma(\overline \O_\tau, \O(D))$ is 0.
\end{proof}

\begin{corollary}
    Let $X_\Sigma$ be a toric variety and $\d = \d(A, \psi)$ be an equivariant linear system on $X_\Sigma$, $\tau \in \Sigma$ be a cone and $\chi \in A$ be such that $(\langle \chi, - \rangle + \psi_D)|_\tau \equiv 0$. Then $\d$ lifts along $\overline \O_\tau \xhookrightarrow{} X_\Sigma$ and $\d|_{\overline\O_\tau} = \d\left((\chi^{-1} \cdot A) \cap \tau^\bot, \left(\psi + \langle \chi, - \rangle\right)/\tau\right)$.
\end{corollary}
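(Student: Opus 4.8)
The plan is to deduce this corollary by combining the lifting criterion of \autoref{linsys:equivar_ls:claim:lifting_criterion} with the immediately preceding Claim, after replacing $(A,\psi)$ by an equivalent combinatorial datum on which the restriction is already understood. For the first assertion, observe that the hypothesis --- the existence of $\chi \in A$ with $(\langle\chi,-\rangle + \psi)|_\tau \equiv 0$ --- is exactly condition 3) of \autoref{linsys:equivar_ls:claim:lifting_criterion} (recall $\psi = \psi_D$). Hence by the equivalence $3)\iff 2)$ there, $\d$ lifts along $\overline{\O_\tau}\xhookrightarrow{}X_\Sigma$, which settles the lifting statement.

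For the description of the restriction, the key point is that the preceding Claim handles only the case $\psi(\tau)=0$, whereas here only $(\psi+\langle\chi,-\rangle)|_\tau\equiv 0$. So first I would shift the datum. Set $\psi' \defeq \psi + \langle\chi,-\rangle|_{|\Sigma|}$ and $A' \defeq \chi^{-1}\cdot A$. Then $A = \chi\cdot A'$ and $\psi = \psi' - \langle\chi,-\rangle|_{|\Sigma|}$, so \autoref{linsys:equivar_ls:thm:classification} gives an isomorphism of linear systems $\d(A,\psi)\simeq\d(A',\psi')$; in particular $(A',\psi')$ is again a linear system combinatorial datum (the inequality $A'|_{|\Sigma|}+\psi'\ge 0$ also follows directly, since for $\chi'\in A$ and $\lambda\in|\Sigma|$ one has $\langle\chi^{-1}\chi',\lambda\rangle+\psi'(\lambda)=\langle\chi',\lambda\rangle+\psi(\lambda)\ge 0$). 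By construction $\psi'|_\tau = (\psi+\langle\chi,-\rangle)|_\tau\equiv 0$, so the shifted datum satisfies the hypothesis of the preceding Claim.

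Applying that Claim to $\d(A',\psi')$ yields $\d(A',\psi')|_{\overline\O_\tau}\simeq\d(A'\cap\tau^\bot,\psi'/\tau)$, i.e. $\d(A',\psi')|_{\overline\O_\tau}\simeq\d\big((\chi^{-1}\cdot A)\cap\tau^\bot,(\psi+\langle\chi,-\rangle)/\tau\big)$, which is the asserted right-hand side. To transport this back to $\d$ itself, I would invoke functoriality of the restriction with respect to isomorphisms of linear systems: an isomorphism $\d\simeq\d(A',\psi')$ induces an isomorphism of the restrictions $\d|_{\overline\O_\tau}\simeq\d(A',\psi')|_{\overline\O_\tau}$ (the restriction is just the pullback along $\overline\O_\tau\xhookrightarrow{}X_\Sigma$, and pullback of linear systems sends isomorphisms to isomorphisms). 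Chaining the two isomorphisms finishes the proof. The computation is entirely routine; the only point that needs care is this last compatibility of restriction with the datum-shifting isomorphism, which is where I expect the only genuine (though minor) work to lie.
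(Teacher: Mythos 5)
Your argument is correct and is exactly the intended one: the paper states this corollary without proof, as an immediate consequence of the preceding Claim combined with the datum-shifting isomorphism of \autoref{linsys:equivar_ls:thm:classification} and the lifting criterion of \autoref{linsys:equivar_ls:claim:lifting_criterion}. The details you supply (that the shifted pair $(\chi^{-1}\cdot A,\ \psi+\langle\chi,-\rangle)$ is again a valid combinatorial datum, and that restriction to $\overline{\O_\tau}$ is compatible with the shift isomorphism) are precisely what the paper leaves implicit.
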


\begin{corollary}\label{linsys:equivar_ls:cor:restriction_to_orbit}
     Let $X_\Sigma$ be a toric variety and $\d = \d(A, \psi)$ be an equivariant linear system on $X_\Sigma$, $\tau \in \Sigma$ be a cone and $\chi \in A$ be such that $(\langle \chi, - \rangle + \psi_D)|_\tau \equiv 0$. Then $\d$ lifts along $\O_\tau \xhookrightarrow{} X_\Sigma$ and $\d|_{\O_\tau} = \d\left((\chi^{-1} \cdot A) \cap \tau^\bot, 0\right)$.
\end{corollary}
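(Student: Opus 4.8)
The plan is to factor the immersion $\O_\tau \xhookrightarrow{} X_\Sigma$ through the orbit closure as $\O_\tau \xhookrightarrow{} \overline{\O_\tau} \xhookrightarrow{} X_\Sigma$, reduce the $\overline{\O_\tau}$-step to the preceding corollary, and handle the remaining $\O_\tau$-step by the near-tautological fact that restricting an equivariant linear system to the open torus of a toric variety annihilates its support function.

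First I would invoke the preceding corollary, which already gives that $\d$ lifts along $\overline{\O_\tau} \xhookrightarrow{} X_\Sigma$ and computes, under the identification $\overline{\O_\tau} \cong X_{\Sigma/\tau}$, that $\d|_{\overline{\O_\tau}} = \d\big((\chi^{-1}\cdot A)\cap \tau^\bot,\ (\psi + \langle \chi, -\rangle)/\tau\big)$. It then suffices to restrict this system one step further, from $X_{\Sigma/\tau}$ to its open torus, and to glue the two restrictions by functoriality.

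The key observation is that $\O_\tau$ is exactly the open torus of $\overline{\O_\tau} \cong X_{\Sigma/\tau}$, namely the orbit attached to the trivial cone $\{0\} \in \Sigma/\tau$, whose character lattice is $\tau^\bot$. I would first record the general fact that for any equivariant linear system $\d' = \d(A', \psi')$ on a toric variety $X_{\Sigma'}$ with open torus $T$ one has $\d'|_{T} = \d(A', 0)$: by \autoref{linsys:equivar_ls:cor:ls_contains_equivar_div} the system $\d'$ contains an equivariant divisor $D'$, whose support is disjoint from $T$, so $D'$ (and hence $\d'$) lifts along $T \xhookrightarrow{} X_{\Sigma'}$ by \autoref{linsys:pullbacks_linsys:claim:criterion}; moreover $D'|_T = 0$, so by \autoref{linsys:pullbacks_div:rem:pullback_commutes_w_O} we get $\O(D')|_T \cong \O(D'|_T) = \O_T$, while the monomials of $A'$ restrict to themselves in $k[T]$, whence the combinatorial datum becomes $(A', 0)$. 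Applying this with $\d' = \d|_{\overline{\O_\tau}}$ and $T = \O_\tau$ replaces the support function $(\psi + \langle \chi, -\rangle)/\tau$ by $0$ and leaves the set $(\chi^{-1}\cdot A)\cap \tau^\bot$ untouched.

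Finally I would assemble the two steps by functoriality of the linear-system pullback along the composite $\O_\tau \xhookrightarrow{} \overline{\O_\tau} \xhookrightarrow{} X_\Sigma$ (the composition claim of \autoref{linsys:ssec:pullbacks_ls}, which rests on \autoref{linsys:pullbacks_div:claim:functorial_eff}): since $\d$ lifts along $\overline{\O_\tau} \xhookrightarrow{} X_\Sigma$ and $\d|_{\overline{\O_\tau}}$ lifts along $\O_\tau \xhookrightarrow{} \overline{\O_\tau}$, the system $\d$ lifts along $\O_\tau \xhookrightarrow{} X_\Sigma$ and $\d|_{\O_\tau} \cong (\d|_{\overline{\O_\tau}})|_{\O_\tau} = \d\big((\chi^{-1}\cdot A)\cap \tau^\bot,\ 0\big)$. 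I do not expect a genuine obstacle here, as all the content sits in the preceding corollary and the orbit-closure identification; the only point to watch is that the open torus of $X_{\Sigma/\tau}$ has character lattice $\tau^\bot$, so that the combinatorial set is truly unchanged under the last restriction.
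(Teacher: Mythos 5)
Your proposal is correct and follows exactly the route the paper intends: the corollary is stated without proof precisely because it is the composition of the preceding corollary (restriction to $\overline{\O_\tau} \cong X_{\Sigma/\tau}$) with the observation that restricting further to the open torus $\O_\tau$ trivializes the support function while leaving $(\chi^{-1}\cdot A)\cap\tau^\bot$ unchanged, glued by the functoriality claim of \autoref{linsys:ssec:pullbacks_ls}. Your filling-in of the details, including the identification of $\tau^\bot$ with the character lattice of $\O_\tau$ via \autoref{linsys:equivar_div:claim:character_orbit_restriction}, is exactly the argument the paper leaves implicit.
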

\section{Counting Irreducible Components}\label{sec:counting}

\subsection{Preparations}

\begin{lemma}
    Let $X$ be a noetherian scheme, $Z \subset X$ be a closed subset such that $\dim Z  \ge \dim (X \backslash Z)$. Then $Z$ contains an irreducible component of $X$.
\end{lemma}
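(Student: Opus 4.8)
My plan is to argue by contradiction, exploiting that a noetherian space has only finitely many irreducible components and that a closed subset contains a component exactly when it contains the corresponding generic point. Write the irreducible components of $X$ as $X_1,\dots,X_r$ (finitely many, as $X$ is noetherian) with generic points $\eta_1,\dots,\eta_r$. Since $Z$ is closed and $X_i=\overline{\{\eta_i\}}$, we have $X_i\subseteq Z$ if and only if $\eta_i\in Z$, so it is enough to find a single index $i$ with $X_i\subseteq Z$. I would assume for contradiction that \emph{no} component lies in $Z$; equivalently, every $X_i$ meets the open set $U:=X\setminus Z$, so that each $X_i\cap U$ is a nonempty open (hence dense) subset of $X_i$.

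Next I would extract a top-dimensional irreducible piece of $Z$: put $d:=\dim Z$ and choose an irreducible closed $C\subseteq Z$ with $\dim C=d$, which exists because $Z$ is noetherian and the supremum defining $\dim Z$ is attained by the top term of a maximal chain. Being irreducible, $C$ is contained in some component $X_i$. Under the contradiction hypothesis $X_i\not\subseteq Z$, while $C\subseteq Z$, so the inclusion $C\subseteq X_i$ is strict. Now a dimension count on this single component finishes the argument: a proper irreducible closed subset of the variety $X_i$ has strictly smaller dimension, so $\dim X_i>\dim C=d$, and since $X_i\cap U$ is a nonempty open of the irreducible $X_i$ it has the same dimension, $\dim(X_i\cap U)=\dim X_i$. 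As $X_i\cap U\subseteq U$ this gives
\[
\dim(X\setminus Z)\ \ge\ \dim(X_i\cap U)\ =\ \dim X_i\ >\ d\ =\ \dim Z,
\]
contradicting the hypothesis $\dim Z\ge\dim(X\setminus Z)$. Hence some $X_i$ must be contained in $Z$.

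The step I expect to be the crux is the final dimension bookkeeping, which rests on two facts: that passing to a nonempty open subset of an irreducible component preserves its dimension, and that a proper irreducible closed subset strictly drops dimension. These are standard for varieties, i.e.\ schemes of finite type over $k$, which is exactly the setting in which the lemma is applied here (the $X$ of interest is a closed subscheme of a toric variety). I would emphasize that the finite-type-over-$k$ hypothesis is genuinely needed for these two facts: for a completely general noetherian scheme they can fail — for instance taking $X=\Spec$ of a discrete valuation ring and $Z$ its closed point gives $\dim Z=\dim(X\setminus Z)=0$ while $Z$ contains no component — so the proof should be carried out with $X$ understood to be a variety.
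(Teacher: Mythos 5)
Your proof is correct in the setting where the lemma is actually used, and it is close in spirit to the paper's: both argue by contradiction from the observation that if no component lies in $Z$ then every generic point of $X$ lies in $U \defeq X\setminus Z$, and then derive $\dim(X\setminus Z) > \dim Z$. The paper packages the dimension count differently: from density of $U$ it concludes $Z\subseteq \overline U\setminus U$ and invokes the inequality $\dim(\overline U\setminus U) < \dim U$, whereas you localize to a single top-dimensional irreducible piece $C\subseteq Z$, place it strictly inside a component $X_i$, and compare $\dim C<\dim X_i=\dim(X_i\cap U)$. The more valuable part of your write-up is the caveat at the end, and you are right about it: as stated for an arbitrary noetherian scheme the lemma is false, and your $\Spec$ of a discrete valuation ring is a genuine counterexample (there $Z$ is the closed point, $\dim Z=\dim(X\setminus Z)=0$, and $Z$ contains no irreducible component of $X$). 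Correspondingly, the inequality $\dim(\overline U\setminus U)<\dim U$ that the paper asserts ``for any subset $U\subset X$'' fails in that example, and in fact fails for non-open subsets even of varieties (take $U$ to be the generic point of $\A^1$); it does hold for open $U$ in a scheme of finite type over a field, which is the only case needed, since the lemma is applied to closed subschemes of toric varieties over $k$. So the hypothesis ``noetherian scheme'' should be strengthened to, say, ``scheme of finite type over a field'' (or another hypothesis guaranteeing that nonempty opens of irreducible closed subsets have full dimension and that proper irreducible closed subsets drop dimension), exactly as you propose; with that change both your argument and the paper's go through.
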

\begin{proof}
    Assume the contrary. Then the generic points of irreducible components of $X$ lie in $X \backslash Z$, so $\overline{X \backslash Z} = X$, hence $Z \subset \big(\overline{X \backslash Z}\big) \backslash (X \backslash Z)$. For any subset $U \subset X$ we have that $\dim \overline U \backslash U < \dim U$, so $\dim Z < \dim (X \backslash Z)$ which is a contradiction.
\end{proof}
\begin{corollary}\label{count:preps:cor:irred_comp_cond}
    If $X$ is a noetherian scheme and $Z \subset X$ is a closed irreducible subset such that $\dim Z \ge \dim (X \backslash Z)$, then $Z$ is an irreducible component of $X$.
\end{corollary}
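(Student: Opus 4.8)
The plan is to deduce this immediately from the preceding lemma, of which it is just a refinement to the irreducible case. Since $Z$ is closed and satisfies exactly the hypothesis $\dim Z \ge \dim(X \backslash Z)$, the lemma applies and produces an irreducible component $W$ of $X$ with $W \subseteq Z$. The only remaining work is to upgrade ``$Z$ contains a component'' to ``$Z$ is a component'', and this is where the extra hypothesis that $Z$ is irreducible enters.

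For that step I would recall that the irreducible components of $X$ are precisely the \emph{maximal} irreducible closed subsets of $X$. We now have a chain $W \subseteq Z \subseteq X$ in which $W$ is such a maximal element and $Z$ is, by assumption, an irreducible closed subset. Maximality of $W$ among irreducible closed subsets then forces $Z = W$: any irreducible closed set containing the maximal one $W$ must equal it. Hence $Z$ coincides with the irreducible component $W$, which is exactly the assertion.

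I do not expect any genuine obstacle here, since all the substance is already carried by the lemma; the corollary merely observes that when the closed set $Z$ is itself irreducible, the component it contains can be nothing other than $Z$ itself. The one point worth stating carefully is the direction of the maximality argument — that a maximal irreducible closed subset admits no strictly larger irreducible closed subset, so $W \subseteq Z$ with $Z$ irreducible closed yields $W = Z$ rather than merely $W \subsetneq Z$.
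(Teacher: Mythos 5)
Your proof is correct and is exactly the argument the paper intends (the corollary is stated without proof, as an immediate consequence of the preceding lemma): the lemma gives a component $W \subseteq Z$, and maximality of $W$ among irreducible closed subsets together with the irreducibility of $Z$ forces $W = Z$. Nothing to add.
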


\begin{lemma}\label{count:preps:lem:intersec_dim}
    Let $\mathfrak d_1, \dots, \mathfrak d_m$ be non-empty equivariant linear systems on $T^n$, $n \le m$. Then one of the following is satisfied:
    \begin{enumerate}
        \item $D_1 \cap \hdots \cap D_m = \varnothing$ for the general $D_1 \in \mathfrak d_1, \dots, D_m \in \mathfrak d_m$;
        
        \item $D_1 \cap \hdots \cap D_m$ is of pure dimension $n - m$ for the general $D_1 \in \mathfrak d_1, \dots, D_m \in \mathfrak d_m$.
    \end{enumerate}
\end{lemma}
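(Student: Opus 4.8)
The plan is to pass to the universal family of intersections over the space of coefficients and to read off the dichotomy from the theorem on the dimension of the fibres of a dominant morphism. The one geometric input special to the torus is that every character is invertible, so the linear systems $\mathfrak d_i$ are base-point-free on $T^n$.

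First I would fix the combinatorial picture. By \autoref{linsys:equivar_ls:thm:classification} each $\mathfrak d_i$ equals $\mathfrak d(A_i, 0)$ for a non-empty finite set $A_i \subset M$ (the support function is forced to vanish, since the fan of $T^n$ has only the cone $\{0\}$), a general member is $D_i = V(f_i)$ for a general $f_i \in k^{A_i}$, and the natural parameter space is $S \defeq \prod_{i=1}^m k^{A_i}$, which is irreducible with $\dim S = \sum_i |A_i|$. The key observation is that for each fixed point $x \in T^n$ and each $i$ the evaluation $k^{A_i} \to k$, $f_i \mapsto f_i(x)$, is a nonzero (hence surjective) linear functional, because already $f_i = \chi$ for a single $\chi \in A_i$ satisfies $\chi(x) \neq 0$.

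Next I would build the incidence variety $\mathcal Z \defeq \{ (x, f_\bullet) \in T^n \times S : f_i(x) = 0 \text{ for all } i \}$ and study the first projection $p \colon \mathcal Z \to T^n$. By the observation above, the morphism of trivial vector bundles over $T^n$ given by $(x, f_\bullet) \mapsto (x, (f_1(x), \dots, f_m(x)))$ is fibrewise surjective onto $T^n \times \A^m$, so $\mathcal Z$, being the preimage of the zero section, is a vector subbundle of rank $\sum_i(|A_i| - 1)$. Thus $p$ exhibits $\mathcal Z$ as the total space of a vector bundle over the irreducible base $T^n$, whence $\mathcal Z$ is irreducible of dimension $\dim \mathcal Z = n + \sum_i |A_i| - m$. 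Then I would feed the second projection $\pi \colon \mathcal Z \to S$ into the fibre-dimension theorem, noting that the fibre of $\pi$ over $s = (f_1, \dots, f_m)$ is exactly $D_1 \cap \dots \cap D_m$. If $\pi$ is not dominant, then $S \setminus \overline{\pi(\mathcal Z)}$ is a dense open over which the fibres are empty, giving alternative (1). If $\pi$ is dominant, the generic fibre has dimension $\dim \mathcal Z - \dim S = n - m$; combining the universal lower bound (every component of every non-empty fibre has dimension $\geq n - m$) with the upper bound valid on a dense open (there the maximal fibre dimension equals $n - m$) forces every fibre over that dense open to have pure dimension $n - m$, which is alternative (2). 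Finally I would record that ``general $D_i \in \mathfrak d_i$'' matches ``general $s \in S$'' after discarding the proper closed locus $\{\exists\, i : f_i = 0\}$ and noting that the good locus is a union of scaling orbits, so the dense opens produced above indeed witness the claim.

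The bundle bookkeeping and the translation of the word ``general'' are routine. The only place demanding care is the dominant case: the fibre-dimension theorem directly supplies the generic fibre dimension and a universal lower bound, and I expect the main (minor) obstacle to be phrasing the combination of these two so as to conclude \emph{pure} dimensionality, rather than a mere dimension bound, on a dense open of $S$. Note incidentally that the hypothesis $n \le m$ is never used: for $m > n$ the negative value $n - m$ simply forbids $\pi$ from being dominant and forces alternative (1).
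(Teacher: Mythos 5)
Your proof is correct and follows essentially the same route as the paper: both pass to the incidence variety over the coefficient space and apply the fibre-dimension theorem to the second projection, with purity coming from the lower bound on components of an intersection of $m$ hypersurfaces. The only difference is that you compute $\dim \mathcal Z$ directly via the vector-bundle structure of the first projection (using that characters are nonvanishing on $T^n$), whereas the paper cites \cite[Cor.~2.14]{Zhizhin2024} for the same dimension count.
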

\begin{proof}
    By \autoref{linsys:equivar_ls:thm:classification} there are finite subsets $A_i \subset M$ such that $\d_i = \d(A_i, 0)$ (we use that $\Pic T^n = 0$). Consider the space\footnote{for a vector space $V$ by $\A(V)$ we mean the corresponding affine space (i.e. the corresponding variety)}  
    \[
    X \defeq \bigg\{ (f_1, \dots, f_m, p) \in \A(k \cdot A_1) \times \dots \times \A(k \cdot A_m) \times T^n\ \bigg|\ f_1(p) = \dots = f_m(p) = 0 \bigg\}
    \]
    By \cite[Cor. 2.14]{Zhizhin2024} $\dim X = n - m + \sum_i |A_i| = n - m + \dim \prod_i \A(k \cdot A_i)$. Assume that the intersection $D_1 \cap \dots \cap D_m$ is non-empty for the general $D_i \in \d_i$. It implies that the general fibre of the projection $X \to \prod_i \A(k \cdot A_i)$ is non-empty, i.e. the projection is domiminant. Then the dimension of the general fibre is the relative dimension of the projection, i.e. $n - m$. Hence, the dimension of $D_1 \cap \dots \cap D_m$ is equal to $n - m$ for the general $D_i \in \d_i$. It is an intersection of $m$ effective Cartier divisors, so the dimension is pure.
\end{proof}

\subsection{Counting Theorem}

\begin{definition}
    Let $\d_1, \dots, \d_m$ be equivariant linear systems on a toric variety $X_\Sigma$. We denote by $K_{\O_\sigma}(\d_1|_{\O_\sigma}, \dots, \d_m|_{\O_\sigma})$ the number of geometric irreducible components of the intersection $D_1 \cap \dots \cap D_m \cap \O_\sigma$ for the general $D_1 \in \d_1, \dots, D_m \in \d_m$.
\end{definition}

\begin{remark}
    From \autoref{linsys:equivar_ls:thm:classification} we know that there are $A_i \subset M$, $\psi_i: |\Sigma| \to \Z$ such that $\d_i = \d(A_i, \psi_i)$ and from \autoref{linsys:equivar_ls:claim:lifting_criterion} we know that $\d_i$ degenerates along $\O_\sigma \to X_\Sigma$ if and only if for any $\chi \in A_i$ we have $(\langle \chi, - \rangle + \psi_i)|_\sigma \not \equiv 0$. Otherwise by \autoref{linsys:equivar_ls:cor:restriction_to_orbit} $\d_i|_{\O_\sigma} = \d\left((\chi^{-1} \cdot A_i) \cap \sigma^\bot, \left(\psi_i + \langle \chi, - \rangle\right)/\tau\right)$. So one could compute $K_{\O_\sigma}(\d_1|_{\O_\sigma}, \dots, \d_m|_{\O_\sigma})$ using \autoref{prelim:ssec:khovanskii} via replacing $\d_i|_{\O_\sigma}$ by $(A_i \cdot \chi^{-1}_{i, \sigma}) \cap \sigma^\bot$ for appropriate $\chi_{i, \sigma}$. 
\end{remark}

\begin{theorem}
    Let $\mathfrak d_1, \dots, \mathfrak d_m$ be equivariant linear systems on a toric variety $X_\Sigma$. Consider the sets $
    \mathcal D(\sigma) \defeq \{ i\ |\ \d_i \text{ degenerates along } \O_\sigma \xhookrightarrow{} X_\Sigma \}$
    and the function $d: \Sigma \to \mathbb Z_{\ge 0}$, $\sigma \mapsto |\mathcal D(\sigma)| - \dim \sigma$. Define the set
    \[
    \mathcal S = \{ \sigma \in \Sigma \ |\ d(\sigma) \ge d(\tau) \text{ for all faces } \tau \text{ of } \sigma\}.
    \]
    Then for the general $D_1 \in \mathfrak d_1, \dots, D_m \in \mathfrak d_m$ the intersection $D_1 \cap \dots \cap D_m$ has the following number of irreducible components:
    \[
    \sum_{\sigma \in \mathcal S} K_{\O_\sigma}(\mathfrak d_1|_{\O_\sigma}, \dots, \mathfrak d_m|_{\O_\sigma})
    \]
    where if $\mathfrak d_i$ degenerates along $\O_\sigma \xhookrightarrow{} X_\Sigma$, then we assume\footnote{i.e. we assume $K_\sigma(\mathfrak d_1|_\sigma, \dots, \mathfrak d_m|_\sigma) \defeq K_\sigma (\mathfrak d_{i_1}|_\sigma, \dots, \mathfrak d_{i_l}|_\sigma)$ for $\{ i_1, \dots, i_l \} = \{1, \dots, m\} \backslash \mathcal D(\sigma)$} $\mathfrak d_i|_\sigma = \varnothing$.
\end{theorem}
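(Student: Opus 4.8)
The plan is to stratify the intersection $Y \defeq D_1 \cap \dots \cap D_m$ by the orbit decomposition $X_\Sigma = \bigsqcup_{\sigma \in \Sigma} \O_\sigma$ and to decide, orbit by orbit, which irreducible components of $Y \cap \O_\sigma$ survive as irreducible components of $Y$. First I would record two facts that make the strata tractable. On one hand, since the restriction of sections $k \cdot A_i \to \Gamma(\O_\sigma, \d_i|_{\O_\sigma})$ is the surjection onto $k \cdot A_i^\sigma$ described in \autoref{linsys:equivar_ls:cor:restriction_to_orbit}, and the fan has only finitely many cones, a divisor tuple that is general on $X_\Sigma$ restricts simultaneously to a general tuple on every orbit; hence $Y \cap \O_\sigma$ is cut out inside $\O_\sigma$ by the restricted systems $\d_i|_{\O_\sigma}$ of the non-degenerate indices $i \notin \mathcal D(\sigma)$, and by \autoref{count:preps:lem:intersec_dim} it is either empty or of pure dimension $n - m + d(\sigma)$ with exactly $K_{\O_\sigma}(\d_1|_{\O_\sigma}, \dots, \d_m|_{\O_\sigma})$ irreducible components. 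On the other hand, the degenerate indices contribute nothing after restriction, because $\O_\sigma \subset D_i$, hence $\overline{\O_\sigma} \subset D_i$, for every $i \in \mathcal D(\sigma)$.

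Next I would set up the bijection between global components and surviving stratum components. Every irreducible component $Z$ of $Y$ has its generic point in a unique orbit $\O_\sigma$, and then $C \defeq Z \cap \O_\sigma$ is open and dense in $Z$, so it is irreducible with $\overline C = Z$; a short argument (closing up and using maximality of $Z$) shows that $C$ is in fact an irreducible component of $Y \cap \O_\sigma$. Conversely $C \mapsto \overline C$ sends components of the various $Y \cap \O_\sigma$ to irreducible closed subsets of $Y$, and components coming from different orbits have different closures, since the generic point remembers its orbit. Thus it suffices to prove, for each $\sigma$ with $Y \cap \O_\sigma \ne \varnothing$ and each component $C$ of $Y \cap \O_\sigma$, that $\overline C$ is an irreducible component of $Y$ if and only if $\sigma \in \mathcal S$; summing $K_{\O_\sigma}$ over $\mathcal S$ then yields the formula.

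For the "only if" direction I would argue by absorption using Krull's principal ideal theorem. Suppose $\sigma \notin \mathcal S$, so some face $\tau$ of $\sigma$ satisfies $d(\tau) > d(\sigma)$; then $\O_\sigma \subset \overline{\O_\tau}$. Inside the irreducible toric variety $\overline{\O_\tau} \cong X_{\Sigma/\tau}$ of dimension $n - \dim\tau$, the set $Y \cap \overline{\O_\tau}$ is cut out by only the $m - |\mathcal D(\tau)|$ non-degenerate divisors (the rest contain $\overline{\O_\tau}$), so every nonempty irreducible component of it has dimension at least $(n - \dim\tau) - (m - |\mathcal D(\tau)|) = n - m + d(\tau)$. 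Since $C \subset \O_\sigma \subset \overline{\O_\tau}$ has dimension $n - m + d(\sigma) < n - m + d(\tau)$, the component $W$ of $Y \cap \overline{\O_\tau}$ containing $C$ strictly contains $\overline C$ and lies in $Y$; hence $\overline C$ is not a component of $Y$. This half needs no genericity beyond nonemptiness.

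For the "if" direction I would use the sharp upper bound. Assume $\sigma \in \mathcal S$ and suppose $\overline C \subsetneq Z$ for some component $Z$ of $Y$. Let $\O_\tau$ be the orbit of the generic point of $Z$; as in the previous paragraph $Z \cap \O_\tau$ is a component of $Y \cap \O_\tau$, so $\dim Z = n - m + d(\tau)$ by \autoref{count:preps:lem:intersec_dim}, while $\overline C \subset Z \subset \overline{\O_\tau}$ forces $\O_\sigma \subset \overline{\O_\tau}$, i.e. $\tau$ is a face of $\sigma$. Then $n - m + d(\sigma) = \dim \overline C < \dim Z = n - m + d(\tau)$ exhibits a face $\tau$ of $\sigma$ with $d(\tau) > d(\sigma)$, contradicting $\sigma \in \mathcal S$; therefore $\overline C$ is maximal, i.e. an irreducible component of $Y$ (alternatively one may invoke \autoref{count:preps:cor:irred_comp_cond}). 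The main obstacle is not any single step but the two-sided dimension bookkeeping that underlies both halves: the lower bound from Krull drives absorption, the matching upper bound of \autoref{count:preps:lem:intersec_dim} drives persistence, and the latter forces one to check that general divisors on $X_\Sigma$ really do restrict to general divisors on all finitely many orbits at once, so that the expected-dimension conclusion is available on every stratum simultaneously.
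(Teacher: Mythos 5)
Your proof is correct and takes essentially the same route as the paper's: stratify $Y = D_1 \cap \dots \cap D_m$ by orbits, use \autoref{count:preps:lem:intersec_dim} (together with the observation that a general tuple restricts to a general tuple on each of the finitely many orbits) to get pure dimension $n-m+d(\sigma)$ on each stratum, and then compare $d(\sigma)$ against $d(\tau)$ for faces $\tau$ --- Krull's bound on $\overline{\O_\tau}$ for absorption when $\sigma \notin \mathcal S$, and the matching upper bound for persistence when $\sigma \in \mathcal S$. The only cosmetic difference is that for persistence the paper compares $Y\cap\O_\sigma$ with $Y\cap(X_\sigma\setminus\O_\sigma)$ inside the open chart $X_\sigma$ via \autoref{count:preps:cor:irred_comp_cond}, whereas you argue by contradiction through the orbit containing the generic point of a would-be absorbing component; the underlying dimension count is identical.
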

\begin{proof}
    We prove that the irreducible components of $D_1 \cap \hdots D_m \cap \O_\sigma$ are\footnote{in the sense that the generic points of these components is a subset of the generic points of irreducible components of $D_1 \cap \hdots D_m$} irreducible components of $D_1 \cap \hdots \cap D_m$ if and only if $d(\sigma) \ge d(\tau)$ for all faces $\tau \subset \sigma$ --- then the formula of the theorem is obvious. First fix any $\sigma \in \mathcal S$. From \autoref{count:preps:lem:intersec_dim} we know that for the general $D_1 \in \mathfrak d_1, \dots, D_m \in \mathfrak d_m$ the intersection $D_1 \cap \hdots D_m \cap \O_\sigma$ is of pure dimension $n - m + d(\sigma)$ if non-empty, which is greater or equal than $\dim \Big(D_1 \cap~\hdots \cap D_m \cap (X_\sigma \backslash \O_\sigma)\Big)$ because $X_\sigma$ consists of $X_\sigma \cap \O_\tau$ for faces $\tau \subset \sigma$. Hence, by \autoref{count:preps:cor:irred_comp_cond} all irreducible components of $D_1 \cap \hdots D_m \cap \O_\sigma$ are irreducible components of $D_1 \cap~\hdots D_m \cap~X_\sigma$ and of $D_1 \cap \hdots \cap D_m$ as $X_\sigma$ is open in $X_\Sigma$. 
    
    Now fix any cone $\sigma \in \Sigma$ such that\footnote{i.e. $\sigma \not \in \mathcal S$} there is a face $\tau \subset \sigma$ with $d(\tau) > d(\sigma)$. Since $\O_\sigma \subset \overline{\O_\tau}$, if any irreducible component of $D_1 \cap \hdots D_m \cap \O_\sigma$ is an irreducible component of $D_1 \cap \hdots \cap D_m$, then it is also an irreducible component of $D_1 \cap \hdots \cap D_m \cap \overline{\O_\tau}$. Consider the quotient cone $\Sigma/\tau$ and the corresponding variety $X_{\Sigma/\tau} = \overline{\O_\tau}$. Without loss of generality $\Sigma = \Sigma/\tau$ --- and $d(\tau) = 0$. Then $\dim (D_1 \cap \hdots \cap D_m \cap \O_\sigma) < n - m$ --- it cannot contain no irreducible components as the dimension of the intersection of $m$ effective Cartier divisors on an $n$-dimensional irreducible variety cannot be less than $n - m$ at any point. Thus we are done
\end{proof}

\bibliographystyle{bibstyle}
\bibliography{bibliography}

\end{document}